\documentclass{amsart}
\usepackage{amsmath,amscd,xypic,amssymb,combelow,color,enumitem,graphicx,float,comment,mathtools}
\usepackage{tikz-cd}
\usepackage[hidelinks]{hyperref}
\definecolor{citation}{rgb}{0,.40,.80}
\hypersetup{
   colorlinks=true,
   citecolor=black,
    linkcolor=citation,
    }
\usepackage[style=alphabetic, backend=biber, eprint=true, maxbibnames=99, doi=false, url = false, isbn=false]{biblatex}
\usepackage[T2A]{fontenc}
\usepackage[russian,english]{babel}
\usepackage{quiver}
\usetikzlibrary{babel}
\newtheorem{theorem}{Theorem}[section]

\newtheorem{proposition}[theorem]{Proposition}
\newtheorem{lemma}[theorem]{Lemma}

\newtheorem{definition}[theorem]{Definition}
\newtheorem{claim}[theorem]{Claim}

\newtheorem{remark}[theorem]{Remark}

\def\GL{\mathrm{GL}}

\def\BA{{\mathbb{A}}}
\def\BC{{\mathbb{C}}}

\def\BN{{\mathbb{N}}}
\def\BF{{\mathbb{F}}}

\def\BZ{{\mathbb{Z}}}

\DeclareMathOperator{\Tr}{\mathrm{Tr}}

\DeclareMathOperator{\Db}{\mathrm{D}^{\mathrm{b}}}

\def\CA{{\mathcal{A}}}

\def\DD{{\mathcal{D}}}

\def\CS{{\mathcal{S}}}

\def\CV{{\mathcal{V}}}

\def\ph{\varphi}

\def\sym{\textrm{sym}}

\def\ba{{\mathbf{a}}}
\def\bb{{\mathbf{b}}}

\def\bd{{\mathbf{d}}}

\def\bk{{\mathbf{k}}}

\def\br{{\mathbf{r}}}

\def\bs{\boldsymbol{\varsigma}}
\def\bell{\boldsymbol{\ell}}

\def\nn{{\mathbb{N}^I}}
\def\zz{{\mathbb{Z}^I}}

\newcommand{\blambda}{\boldsymbol{\lambda}}

\def\bm{{\boldsymbol{m}}}
\def\bn{{\boldsymbol{n}}}

\def\b0{{\boldsymbol{0}}}

\def\loccit{\emph{loc.~cit.~}}

\def\hdeg{\text{hdeg }}
\def\vdeg{\text{vdeg }}

\def\edge{\Omega}
\def\dedge{\bar{\edge}}
\def\dalpha{\bar{\alpha}}
\def\dQ{\bar{Q}}
\def\tQ{\tilde{Q}}

\def\tW{\tilde{W}}

\def\ring{R}
\def\field{\BF}

\def\ozeta{\bar{\zeta}}
\def\short{\prod_{(\alpha : i \rightarrow j) \in \edge\text{ or } (\dalpha:j \rightarrow i)\in \edge}}

\def\soft{\begin{otherlanguage*}{russian}Ь \end{otherlanguage*}}
\def\hard{\begin{otherlanguage*}{russian}Ъ \end{otherlanguage*}}

\def\Ts{\Phi}
\def\eTs{\Phi}

\def\kha{K^{T,\omega\text{-nilp}}_{\tQ,\tW}}
\def\kzero{K^{T}_{\tQ}}

\def\khan{K^{T,\omega\text{-nilp}}_{\tQ,\tW,\bn}}

\def\kzeroloc{K^{T}_{\tQ,\text{loc}}}

\def\ekha{K^{T,\omega\emph{-nilp}}_{\tQ,\tW}}

\def\ekhan{K^{T,\omega\emph{-nilp}}_{\tQ,\tW,\bn}}

\DeclareMathOperator{\Perf}{\mathrm{Perf}}
\DeclareMathOperator{\fX}{\mathfrak{X}}
\DeclareMathOperator{\Coh}{\mathrm{Coh}}

\def\point{(\cdot)}

\def\nilpstack{\mathfrak{Y}}

\begin{document}

\title[$K$-theoretic Hall Algebras and Coulomb branches]{\Large{\textbf{$K$-theoretic Hall algebras and Coulomb branches}}} 

\author[Shivang Jindal and Andrei Negu\cb t]{Shivang Jindal and Andrei Negu\cb t}

\address{École Polytechnique Fédérale de Lausanne (EPFL), Lausanne, Switzerland} 
\email{shivang.jindal@epfl.ch}

\address{École Polytechnique Fédérale de Lausanne (EPFL), Lausanne, Switzerland \newline \text{ } \ \ Simion Stoilow Institute of Mathematics (IMAR), Bucharest, Romania} 
\email{andrei.negut@gmail.com}
	
\begin{abstract} We construct a surjective homomorphism from the (suitably interpreted) double loop-nilpotent $K$-theoretic Hall algebra to the Coulomb branch algebra of a quiver gauge theory, using the shuffle algebra interpretation.
\end{abstract}

\maketitle

\bigskip

\section{Introduction}
\label{sec:intro}

\subsection{Branches of quiver gauge theories}

Consider a quiver $Q$ whose arrows are endowed with equivariant parameters as in Subsection \ref{sub:quivers}. To this data, one can associate a quiver gauge theory, which comes with two algebraic varieties called the Higgs and Coulomb branches respectively. These two varieties are expected to be closely related to each other via a phenomenon called 3-dimensional mirror symmetry, made mathematically precise by the Hikita-Nakajima conjecture \cite{Hikita} (see also \cite{BLPW, KMP}, also \cite{Zhou} in $K$-theory, and numerous other works).

\medskip 

\noindent In geometric representation theory, one is interested in the algebras associated to the two branches, as follows:

\medskip 

\begin{itemize}[leftmargin=*]

\item the ``Higgs branch algebra'' refers to a certain incarnation of the $K$-theoretic Hall algebra of $Q$, see \cite{KS, Padurariu, SVHilb}. 

\medskip 

\item the ``Coulomb branch algebra'' refers to a certain deformed algebra of functions on the space defined in \cite{Nakajima, BFN}.
    
\end{itemize}

\medskip 

\noindent Let $K^T_{\Pi_Q}$ be the torus $T$ equivariant $K$-theoretic preprojective Hall algebra of $Q$, that was introduced in \cite{SVHilb, Yang_2018}. It is the natural algebraic object in the first bullet above, as it acts on the $K$-theory groups of all Higgs branches. By a well-known phenomenon known as dimensional reduction, there is an isomorphism
\begin{equation}
\label{eqn:k-ha intro}
K^T_{\Pi_Q} \cong K_{\tQ,\tW}^T
\end{equation}
where the right-hand side is the $K$-theoretic Hall algebra (abbreviated $K$-HA, and introduced in \cite{KS, Padurariu}) of the tripled quiver with potential $(\tQ,\tW)$ that we recall in Subsection \ref{sub:k-ha}. Because of the isomorphism above, we will mostly be concerned with the algebra in the right-hand side of \eqref{eqn:k-ha intro} throughout the present paper. In fact,  our precise object of study is an $\ring = K^T\point$ algebra denoted by
\begin{equation}
\label{eqn:higgs intro}
\kha
\end{equation}
which geometrically corresponds to the loop-nilpotent version of the aforementioned $K$-HA. Using the shuffle algebra interpretation of \eqref{eqn:higgs intro}, we show how to construct a shifted double of \eqref{eqn:higgs intro}. Meanwhile, the $K$-theoretic Coulomb branch is an $\ring$-algebra
\begin{equation}
\label{eqn:coulomb intro}
\CA_{\bd|\bk,\bell}
\end{equation}
defined in \cite{BFN, BFNQuiver} for any $\bd \in \nn$ and framing $I$-tuples $\bk,\bell \in \nn$ as in \eqref{eqn:framing}. The main purpose of the present paper is to connect the Higgs and Coulomb branch algebras, by constructing an explicit algebra homomorphism between them.

\medskip 

\begin{theorem}
\label{thm:main}

For any $\bd,\bk,\bell \in \nn$, there is a surjective $\ring$-algebra homomorphism
\begin{equation}
\label{eqn:main}
\Big(\bd^\vee-\bk-\bell\text{ shifted Drinfeld double of }\ekha \Big) \twoheadrightarrow \CA_{\bd|\bk,\bell} 
\end{equation}
under the Assumption \soft of \eqref{eqn:assumption soft}, where $\bd^\vee$ is defined in \eqref{eqn:vee}.
    
\end{theorem}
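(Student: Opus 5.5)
The plan is to work on both sides through their difference-operator (shuffle) realizations and to construct the map explicitly on generators. On the Higgs side, I would use the shuffle algebra interpretation of $\kha$. This identifies $\kha$ with a subalgebra $\CS^+$ of the symmetric rational-function shuffle algebra, cut out by wheel conditions coming from the loop-nilpotent condition. Its shifted Drinfeld double is then presented as $\CS^+ \otimes \CS^0 \otimes \CS^-$. Here $\CS^-$ is the opposite shuffle algebra, and $\CS^0$ is a commutative algebra of Cartan currents $\ph_i^\pm(z)$. The commutation relations between $\CS^+$ and $\CS^-$ are the usual ones, with the shift $\bd^\vee-\bk-\bell$ entering through the leading orders of $\ph^\pm_i(z)$ at $z=0,\infty$.

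On the Coulomb side, I would use the BFN embedding of $\CA_{\bd|\bk,\bell}$, after localization in the $K$-theoretic abelianization, into the algebra of $q$-difference operators in variables $w_{i,a}$, $1\le a\le d_i$, symmetric under $\prod_i S_{d_i}$. The images of minuscule monopole operators, i.e.\ dressed minuscule monopoles, are explicit, and they generate $\CA_{\bd|\bk,\bell}$ over $\ring$. This generation statement is where Assumption \soft{} should be used.

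The homomorphism is defined by the following formulas:
\begin{itemize}
\item an element $R(z_{i,1},\dots,z_{i,n_i})\in\CS^+$ of degree $\bn$ goes to the symmetrization over subsets $\{a\}\subset\{1,\dots,d_i\}$ of size $n_i$ of $R(w_{\text{subset}})$, times a product of factors $\zeta$ coupling chosen and unchosen variables, times framing factors $\prod(\text{equivariant weight}\cdot w)$ for $\bk$, followed by the shift operator $\prod_{a\in\text{subset}} D_{i,a}$;
\item $\CS^-$ is handled analogously, with inverse shifts and the $\bell$ framing factors;
\item $\ph_i^\pm(z)$ go to the expansions of an explicit rational function, namely the ratio of $\prod_a(1-w_{i,a}/z)$-type factors over neighbors, multiplied by the framing terms.
\end{itemize}

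The proof then proceeds in the following steps.

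First, I check that the formula is well defined. The images must be symmetric, and the poles must cancel so that the result lies in the algebra of difference operators. Here the wheel conditions defining $\CS^+$ are exactly what make the residues at the diagonals $w_{i,a}=q w_{j,b}$ vanish.

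Second, I check multiplicativity on $\CS^\pm$. This is a standard computation: composing two symmetrized difference operators reproduces the shuffle product, since moving $D$ past the coefficients yields the $\zeta$ factors.

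Third, I check the cross relation $[\CS^+,\CS^-]$. It suffices to do this on generators of degree $\boldsymbol{\varsigma}^i$, if $\CS^\pm$ is generated in degree one. If it is not, I would instead use the pairing characterization of the double. The degree-one check amounts to a contour/partial-fraction computation producing $\ph^+_i-\ph^-_i$, and the shift $\bd^\vee-\bk-\bell$ appears as the degree mismatch of this rational function at $0$ and $\infty$.

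Fourth, surjectivity follows because the image contains all dressed minuscule monopoles (degree $\boldsymbol{\varsigma}^i$ shuffle elements $z^m$ times symmetric functions) together with the symmetric functions of $w$ (from $\CS^0$). By Assumption \soft{} and the BFN generation results these generate $\CA_{\bd|\bk,\bell}$.

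The main obstacle I expect is the well-definedness and cross relation in the loop-nilpotent double, i.e.\ showing that the image of every element of $\CS^+$, not just generators, lands in $\CA_{\bd|\bk,\bell}$ rather than its localization. If $\CS^+$ is not generated in degree one, I would try to prove integrality by showing that the residues at the bad hyperplanes are controlled by the wheel conditions, reducing to the rank-one-per-vertex computation.
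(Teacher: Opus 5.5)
\textbf{There are genuine gaps.} Your formula for the map on $\CS^+$ is a sum over subsets $A_i\subset\{1,\dots,d_i\}$ of size $n_i$, with each $D_{ia}$ appearing at most once. This is not multiplicative, so Step 2 fails. For instance, if $d_i=1$ your formula kills everything of degree $2\bs^i$, including $e_{\bs^i,1}*e_{\bs^i,1}\in\CS^+$. But the square of the image of $e_{\bs^i,1}$ contains a nonzero multiple of $D_{i1}^2$.

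The correct Finkelberg--Frassek--Tsymbaliuk map \eqref{eqn:action e} does more than your formula. It sums over all $I$-compositions of $\bn$ into $d_i$ parts, takes iterated residues of $\Xi(E)$ along $q$-strings, and produces the powers $D_{ia}^{n_i^{(a)}}$. It collapses to your subset formula only on the special elements $e_{\bn,g}$ of \eqref{eqn:special}, because these vanish whenever two same-color variables have ratio $q$ (Proposition \ref{prop:key}). Your Step 1 is also aimed at the wrong issue: $\DD_{\bd|\bk,\bell}$ already has coefficients rational in $w$, so the real question is integrality, not pole cancellation.

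Your treatment of the double has a similar problem. Over $\ring$, $\CS^\pm$ is not generated in degree one; that holds only over $\field$. Moreover, the cross relations \eqref{eqn:commutator} between integral elements can have coefficients in $\field$, so there is no reason for an integral triangular decomposition $\CS^+\otimes\CS^0\otimes\CS^-$ with closed relations. The paper avoids checking any relations. It defines the integral double as the $\ring$-subalgebra of $\CS^{\bd^\vee-\bk-\bell}_{\text{loc}}$ generated by $\CS^\pm$ and $\ring[p_{i,\pm k}]$. It then restricts the homomorphism $\eTs$ of Theorem \ref{thm:tsymbaliuk}, which is already established on the localized double.

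\textbf{The missing idea.} What settles both your ``main obstacle'' (integrality of the image) and surjectivity is Theorem \ref{thm:shuffle int}: $\CS^+$ is generated over $\ring$ by the $e_{\bn,g}$ for \emph{all} $\bn\in\nn$. It is proved by descending induction over $I$-partitions, using the specializations $\text{Spec}_P$. Since $\eTs(e_{\bn,g})=E_{\bn,g}$ for $\bn\le\bd$ and $0$ otherwise (likewise for $f_{\bn,g}$), the image of the integral double is the $\ring$-algebra generated by two families:
\begin{itemize}
\item the dressed minuscule monopoles $E_{\bn,g},F_{\bn,g}$ for all $\b0\le\bn\le\bd$;
\item power sums of the $w$'s.
\end{itemize}
By \cite{BFN, DK, FT, SS, Weekes}, this is exactly the image of $\CA_{\bd|\bk,\bell}$, and no residue analysis is needed. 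Relying only on degree-$\bs^i$ monopoles plus symmetric functions is not what the available generation results give over $\ring$.

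\textbf{Where Assumption \soft is actually used.} You have misplaced Assumption \soft: it plays no role on the Coulomb side. It enters on the Higgs side, in the identification $\kha\cong\CS^+$ of Theorem \ref{thm:geometry}. You take that identification for granted, but it is a substantial part of the argument, with two components:
\begin{itemize}
\item injectivity of $\iota$, via the Jordan-type stratification and localization;
\item the inclusion $\text{Im}\,\iota\subseteq\CS^+$, via Zhao's trick. There one shows separately that $\Pi_1\Pi_2'$ and $\Pi_1\Pi_2''$ divide each specialization, and Assumption \soft is what lets you conclude that $\Pi_1\Pi_2'\Pi_2''$ does.
\end{itemize}
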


\medskip 

\noindent While we do not have a geometric definition of the shifted Drinfeld double of $K$ that appears in the left-hand side of \eqref{eqn:main}, we will use its shuffle algebra interpretation to define a suitable incarnation of this double. Specifically, the rigorous object that we place in the left-hand side of \eqref{eqn:main} is the algebra \eqref{eqn:rigorous main}, to which we now turn.

\medskip 

\subsection{Shuffle algebras for the Higgs branch}
\label{sub:shuffle higgs}

As shown in \cite{KS, Padurariu}, the $K$-theoretic Hall algebra associated to a quiver with potential admits the following shuffle algebra interpretation. Consider
$$
\CV^+ = \bigoplus_{\bn = (n_i \geq 0)_{i \in I} \in \nn} R[z_{i1}^{\pm 1},\dots,z_{in_i}^{\pm 1}]_{i \in I}^{\sym}
$$
with multiplication given by \eqref{eqn:mult}. From the foundational works \cite{KS, Padurariu}, it is well-known that there is an $\ring$-algebra homomorphism
$$
\iota : \kha \rightarrow \CV^+
$$
Under the geometric assumption \eqref{eqn:assumption geometric}, we prove $\iota$ to be injective in Proposition \ref{thm:injective}. Under Assumption \soft of \eqref{eqn:assumption soft}, we identify in Proposition \ref{prop:generator} the image of $\iota$ with the explicit shuffle algebra $\CS^+ \subset \CV^+$ that we introduce in Definition \ref{def:shuffle int}. Combining these two statements gives us an isomorphism
\begin{equation}
\label{eqn:k shuffle intro}
\kha \xrightarrow{\sim} \CS^+
\end{equation}
Using the topological bialgebra structure on shuffle algebras that we recall in Subsections \ref{sub:coproduct} and \ref{sub:drinfeld double}, one may define a shifted double shuffle algebra
\begin{equation}
\label{eqn:double shuffle intro}
\CS^{\ba-\bb}_{\text{loc}} =  \CS^+_{\text{loc}} \otimes (\text{Cartan subalgebra}) \otimes \CS^{+,\text{op}}_{\text{loc}}
\end{equation}
where ``loc'' refers to tensoring all algebras with $\field = \text{Frac}(\ring)$, and the superscript $\ba-\bb$ refers to the shift with respect to arbitrary $\ba,\bb \in \zz$ (see Section \ref{sec:double}, and praticularly equation \eqref{eqn:dd shifted}, for details). In light of the map \eqref{eqn:k shuffle intro}, we define
\begin{equation}
\label{eqn:rigorous main}
\CS^{\ba-\bb} = \Big(\ba - \bb\text{ shifted Drinfeld double of }\CS^+ \Big)
\end{equation}
as the $\ring$-subalgebra of $\CS^{\ba-\bb}_{\text{loc}}$ generated by $\CS^+ \subset \CS^+_{\text{loc}}$, $\CS^{+,\text{op}} \subset \CS^{+,\text{op}}_{\text{loc}}$, and a certain integral form of the Cartan subalgebra that we review in Subsection \ref{sub:integral double}. We will explain at the end of Section \ref{sec:double} that the presentation above provides tools to explicitly compute the commutation relations of arbitrary elements from the two halves of the shifted double of $\kha$.

\medskip 

\subsection{Shuffle algebras for the Coulomb branch}
\label{sub:shuffle coulomb}

Meanwhile, it was recognized by Finkelberg-Tsymbaliuk in \cite{FT}, Frassek-Tsymbaliuk in \cite{FrT} and Tsymbaliuk in \cite{Tsymbaliuk} that shuffle algebras also provide models for the Coulomb branch algebra. Specifically, it has been well-known since the work of \cite{BFN, BFNQuiver} that for any $\bd \in \nn$ and framing parameters $\bk,\bell$, there is an injective algebra homomorphism
\begin{equation}
\label{eqn:algebra intro}
\CA_{\bd|\bk,\bell} \hookrightarrow \DD_{\bd|\bk,\bell}
\end{equation}
where the codomain refers to the ring of $q$-difference operators \eqref{eqn:difference operators}. Following ideas of \cite{GKLO}, Finkelberg, Frassek and Tsymbaliuk defined a homomorphism
\begin{equation}
\label{eqn:tsymbaliuk intro}
\CS^{\bd^\vee - \bk - \bell}_{\text{loc}} \rightarrow \DD_{\bd|\bk,\bell}
\end{equation}
where the shift parameters $\bd^\vee - \bk - \bell$ are recalled in Subsection \ref{sub:gklo}. In the present paper, we restrict the homomorphism \eqref{eqn:tsymbaliuk intro} to the integral form \eqref{eqn:rigorous main} and obtain an $\ring$-algebra homomorphism 
\begin{equation}
\label{eqn:morphism intro}
\CS^{\bd^\vee - \bk - \bell} \rightarrow \DD_{\bd|\bk,\bell}
\end{equation} 
Our key observation for the proof of Theorem \ref{thm:main} is that the image of \eqref{eqn:morphism intro} precisely coincides with the image of \eqref{eqn:algebra intro}, which was described in \cite{BFN, DK, FT, SS, Weekes}. This implies that we have a surjective $\ring$-algebra homomorphism 
\begin{equation}
\label{eqn:final morphism intro}
\CS^{\bd^\vee - \bk - \bell} \twoheadrightarrow \CA_{\bd|\bk,\bell}
\end{equation} 
Combining the surjection above with the definition \eqref{eqn:rigorous main} yields the surjection \eqref{eqn:main}. 

\medskip 

\subsection{Relation to previous work} 
\label{sub:relation}

The existence of a homomorphism \eqref{eqn:main} has been suggested by numerous mathematicians and physicists, see for instance \cite{RSYZ} and \cite{CL}, as well as the seminal paper \cite{Nakajima}. In the present paper, we make this homomorphism precise by

\medskip 

\begin{itemize}[leftmargin=*]

\item interpreting the specific $K$-HA which appears in the homomorphism \eqref{eqn:main} as the loop-nilpotent version of this construction;

\medskip 

\item identifying the loop-nilpotent $K$-HA with a specific shuffle algebra by new wheel conditions (defined over $\ring$ instead of over $\field = \text{Frac}(\ring)$);

\medskip 

\item finding generators for the aforementioned shuffle algebra over the ring $\ring$. While the shuffle algebra is spherically generated over $\field$ (\cite{Wheel}) it is not so over $\ring$;

\medskip 

\item using the shuffle algebra incarnation to define the shifted double loop-nilpotent $K$-HA in \eqref{eqn:rigorous main}. It would be very interesting to define a double $K$-HA geometrically.

\end{itemize}

\medskip 

\noindent The loop-nilpotent CoHA and K-HA have been studied previously, see for instance \cite{davison2022integrality, DHKSV, VV, J}. Meanwhile, integral forms of shuffle algebras have been studied by \cite{FT, TsymbaliukPBW} in type $A$ and by the second-named author in \cite{Integral} for the Jordan quiver. In fact, the particular integral form of the shuffle algebra we consider in the present paper is the natural generalization of the construction of \loccit to arbitrary quivers. The presentation of Coulomb branch algebras by generators and difference operators, which plays a key role in the present paper, has been studied in \cite{BFN, DK, FT, KWWY, Weekes} and many other works.

\medskip 

\noindent We would also like to mention the upcoming paper \cite{MW}, developed independently from ours. Among other things, the authors of \loccit construct a injective homomorphism from a rational shuffle algebra to a suitably defined limit of (the cohomological version of) the Coulomb branch algebras $\CA_{\bd|\bk,\bell}$ as $\bd \rightarrow \infty$. We thank Dinakar Muthiah for informing us about the details of their project.

\subsection{Acknowledgements} We would like to thank Michael Finkelberg, Dinakar Muthiah, Tudor Pădurariu, Olivier Schiffmann, Alexander Shapiro, Alexander Tsymbaliuk and Alex Weekes for numerous interesting conversations. We gratefully acknowledge the support of the Swiss National Science Foundation grant 10005316.

\bigskip 

\section{The Higgs branch}
\label{sec:higgs}

\medskip

\subsection{Quivers}
\label{sub:quivers}

We follow the conventions of \cite{SS} and \cite{Tsymbaliuk}. Fix a quiver $Q$ with vertex set $I$ and arrow set $\edge$. Consider a field $\field$ of characteristic 0 endowed with elements
\begin{equation}
\label{eqn:parameters}
\left\{ t_{\alpha}, q \right\}_{\alpha \in \edge} \subset \field^*
\end{equation}
and define the subring 
$$
\ring \subset \field \quad \text{generated by }\left\{t_{\alpha}^{\pm 1},q^{\pm 1} \right\}_{\alpha \in \edge}
$$ 
Consider the doubled quiver $\dQ$ with vertex set $I$, whose arrow set $\dedge$ consists of
\begin{equation}
\label{eqn:double parameters}
i \xleftrightharpoons[t_{\alpha}]{q/t_{\alpha}} j
\end{equation}
for every arrow $i \rightarrow j$ in $\edge$. The symbols $t_\alpha, q/t_\alpha$ above represent parameters in $\ring$ associated to the arrows in the double quiver. Alternatively, if we write $\dalpha : j \rightarrow i$ for the opposite arrow to any $\alpha : i \rightarrow j$ in $\dedge$, then the parameter of $\dalpha$ is defined as
\begin{equation}
\label{eqn:opposite parameters}
t_{\dalpha} = \frac q{t_{\alpha}}
\end{equation}
In the present paper, we will encounter the following genericity assumptions on the parameters \eqref{eqn:parameters}  \footnote{Assumption \hard originated in \cite{Wheel}, where it appeared in the slightly stronger form
\begin{equation}
\label{eqn:assumption}
\exists \text{ a field homomorphism } \rho : \field \rightarrow \BC \text{ s.t. } |\rho(q)| < |\rho(t_\alpha)| < 1, \ \forall \alpha \in \edge
\end{equation}
However, we know of no important situation where \eqref{eqn:assumption hard} holds but \eqref{eqn:assumption} fails.}
\begin{equation}
\label{eqn:assumption hard}
\text{Assumption \hard}: \quad q \text{ is not a root of unity, and} 
\end{equation}
$$
\text{the equation } \prod_{\alpha \in \edge} t_{\alpha}^{x_{\alpha}} t_{\dalpha}^{y_{\alpha}} =1 \text{ has no non-trivial solution }x_\alpha, y_\alpha \in \BZ_{\geq 0}
$$
\begin{equation}
\label{eqn:assumption soft}
\text{Assumption \soft}: \quad q \text{ is not a root of unity, and} 
\end{equation}

\begin{itemize}[leftmargin=*]

\item $\left\{t_{\alpha}q^{\BZ_{< 0}}, t_{\beta}^{-1}q^{\BZ_{> 0}}\right\}_{i \xleftrightharpoons[\alpha]{\beta} j} \cap \left\{t_{\alpha}q^{\BZ_{\geq 0}}, t_{\beta}^{-1}q^{\BZ_{\leq 0}} \right\}_{i \xleftrightharpoons[\alpha]{\beta} j} = \varnothing, \ \forall  i,j \in I, \alpha,\beta \in \edge$

\medskip 

\item $\left(t_\alpha q^{\BZ_{<0}} \right)^x = \left(t_\beta q^{\BZ_{\geq 0}} \right)^y \text{ for }x,y 
\in \BZ \text{ implies }x = y = 0, \forall \alpha,\beta \in \edge_{\text{loop}}$

\end{itemize}

\medskip

\noindent (above, $\edge_{\text{loop}} \subset \edge$ denotes the set of loops of $Q$) as well as the 
\begin{equation}
\label{eqn:assumption geometric}
\text{Geometric assumption}: \quad q \text{ is not a root of unity and } t_{\alpha} \notin q^{\BZ}, \forall \alpha \in \Omega_{\text{loop}}
\end{equation}

\medskip 

\subsection{Conventions}
\label{sub:conventions}

In the present paper, the set $\BN$ will contain 0. We write $\b0 = (0,\dots,0) \in \nn$ and let $\bs^i \in \nn$ be the $I$-tuple with a single 1 at position $i$ and 0 everywhere else. The adjacency matrix of the quiver $Q$ induces the bilinear form
$$
\zz \times \zz \xrightarrow{\langle \cdot, \cdot \rangle} \BZ
$$
determined by
$$
\langle \bs^i, \bs^j \rangle = \delta_{ij} - \Big|\text{arrows }i \rightarrow j \Big|
$$
The Euler form refers to the symmetrization of the above bilinear form
$$
\zz \times \zz \xrightarrow{( \cdot, \cdot )} \BZ
$$
determined by
$$
( \bs^i, \bs^j ) = 2\delta_{ij} - \Big|\text{arrows }i \rightarrow j\Big| - \Big|\text{arrows }j \rightarrow i\Big|
$$
Given $\bd \in \zz$, we will write $\bd^\vee$ for the $I$-tuple of integers whose entries are
\begin{equation}
\label{eqn:vee}
d^\vee_i = 2d_i - \sum_{j \in I} d_j \left( \Big|\text{arrows }i \rightarrow j\Big| + \Big|\text{arrows }j \rightarrow i\Big| \right)
\end{equation}
so that for any $\bn \in \zz$ the usual dot product of $I$-tuples satisfies the equation
$$
\bd^\vee \cdot \bn = (\bd,\bn)
$$
For any $\bm = (m_i)_{i \in I}, \bn = (n_i)_{i \in I}$, we write $\bm \leq \bn$ if $m_i \leq n_i$ for all $i \in I$. Let
$$
|\bn| = \sum_{i \in I} n_i
$$

\medskip 

\subsection{$K$-theoretic Hall algebras}
\label{sub:k-ha}

Just as we obtained $\dQ$ from $Q$ by doubling all the arrows, we construct the tripled quiver $\tQ$ from $\dQ$ by adding a loop $\omega_i$ at every vertex $i \in I$. We consider the canonical cubic potential
\begin{equation}
\label{eqn:potential}
\tW = \sum_{\alpha : i \rightarrow j} \Big( \dalpha \alpha \omega_i - \omega_j \alpha \dalpha \Big)
\end{equation}
Let us consider a torus $T = (\BC^*)^r$ endowed with characters 
$$
\Big\{t_\alpha,q : T \rightarrow \BC^*\Big\}_{\alpha \in \edge}
$$
such that $T$ acts on the arrows $\alpha,\dalpha,\omega_i$ via the characters $t^{-1}_\alpha, t^{-1}_{\dalpha},q$, respectively (see Subsection \ref{sub:moduli} for details). This choice ensures that the potential $\tW$ is $T$-invariant. In the language of Subsection \ref{sub:quivers}, denote the $K$-theory of a point by
\begin{equation}
\label{eqn:ring and field}
\ring = K^T\point = \text{Rep}_T
\end{equation}
and $\field = \text{Frac}(\ring)$. To the quiver with potential $(\tQ, \tW)$, one associates a $K$-theoretic Hall algebra ($K$-HA for short) as in \cite{KS, Padurariu}. In fact, in the present paper we need to consider the loop-nilpotent version of this algebra, which we denote by
$$
\kha 
$$
and review in detail in Appendix B. There is an $\ring$-algebra homomorphism 
\begin{equation}
\label{eqn:k-ha to shuffle}
\iota : \kha \rightarrow \kzero 
\end{equation}
defined as in \cite{Padurariu}, where the codomain refers to the $K$-theoretic Hall algebra of the tripled quiver with 0 potential. The following results will be proved in Appendix B, Section \ref{sec:b}.

\medskip 

\begin{theorem}
\label{thm:injective}

Under the geometric assumption \eqref{eqn:assumption geometric}, the map \eqref{eqn:k-ha to shuffle} is injective.

\end{theorem}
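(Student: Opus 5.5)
We plan to reduce injectivity of $\iota$ to a localization statement. Write $\fX_\bn$ for the stack of $\bn$-dimensional representations of $\tQ$, $\nilpstack_\bn \subset \fX_\bn$ for the closed substack where the loops $\omega_i$ act nilpotently, and $\mathrm{Crit}(\Tr \tW)$ for the critical locus. The map \eqref{eqn:k-ha to shuffle} is the composition of two maps. The first is the pushforward from the (loop-nilpotent, critical) category of singularities into $K^T(\fX_\bn)$; following \cite{Padurariu}, this is defined via the support map out of $K^T(\mathrm{Crit}(\Tr\tW)\cap \nilpstack_\bn)$ and the Koszul/dimensional reduction description. The second is restriction to the origin, $K^T(\fX_\bn) \cong K^{T\times G_\bn}\point$, where $G_\bn=\prod_i \GL_{n_i}$, whose elements are symmetric Laurent polynomials. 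Since $\fX_\bn$ is a vector space quotient, the second map is an isomorphism. Injectivity is therefore a statement about the first map.

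\textbf{Step 1: dimensional reduction.} Use dimensional reduction along the $\omega$-directions, which is compatible with the nilpotency condition because $\tW$ is linear in $\omega$. This identifies the loop-nilpotent critical $K$-theory with $K^{T\times G_\bn}$ of the derived zero locus of the moment map $\mu=\sum [\alpha,\dalpha]$ on $\dQ$-representations. The $\omega$-nilpotency translates into a support condition on this locus: the $\omega$ become the cotangent (moment) directions, and nilpotency cuts out a closed $T$-stable substack. Under this identification, $\iota$ becomes the pushforward along the closed embedding into $\overline{\fX}_\bn$, the stack of $\dQ$-representations, followed by restriction to $0$.

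\textbf{Step 2: localization.} Apply Thomason localization for the torus $T\times T_\bn$, with $T_\bn\subset G_\bn$ the maximal torus, after reducing from $G_\bn$ to $T_\bn$ by Weyl-invariants, which is injective. The pushforward to a vector space becomes an isomorphism after inverting the equivariant parameters that have no fixed points away from $0$. One must then show that the source $K$-group is torsion-free as an $R[T_\bn]$-module, so that localization loses nothing.

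\textbf{Main obstacle: torsion-freeness.} We expect this to be the hard step, and it is where the geometric assumption \eqref{eqn:assumption geometric} enters. First we would use it to show that the $T$-fixed locus of the nilpotent support is just the origin. A $T$-fixed representation with nonzero entries in a loop $\alpha$ would force weight identities of the form $t_\alpha\in q^{\BZ}$, after twisting by $G_\bn$ (which, via $\omega$-eigenvalues, only contributes powers of $q$). These identities are excluded by \eqref{eqn:assumption geometric}, and since $q$ is not a root of unity the $\omega$-weights cannot wrap around. Second, we would establish torsion-freeness by stratifying the nilpotent locus by Jordan type of $\omega$, which is affine-paving-like. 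Each stratum is an affine bundle over a product of smaller representation stacks, and we would argue inductively on $|\bn|$ that each stratum's $K$-theory is a free module over the equivariant ring. The long exact sequences then split, giving torsion-freeness of the total group and hence injectivity of $\iota$.
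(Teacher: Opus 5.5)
Your proposal has two genuine gaps.

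\textbf{Step 1 reduces along the wrong variables.} If you integrate out $\omega$, then $\omega$ is the fiber coordinate over the base $\mathrm{Rep}_\bn(\dQ)$ with section $\mu$. The condition ``$\omega$ nilpotent'' then becomes a \emph{singular support} condition on objects of $\Db(\Coh(\mu^{-1}(0)^{\mathrm{der}}/\GL_\bn))$, not a support condition. Since $\omega$ is not a coordinate on $\mu^{-1}(0)$, nilpotency cuts out no closed substack there, so you do not land in the $G$-theory of a closed substack. The later stratification by Jordan type of $\omega$ also has no meaning in that model; your Steps 2--3 tacitly switch to a model in which $\omega$ is a coordinate.

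The fix is to use that $\tW$ is also linear in the $\dalpha$'s and reduce along $\dalpha$ instead. Then $(\alpha,\omega)$ stay on the base and nilpotency is a closed condition on it. The loop-nilpotent $K$-HA becomes $G^T_0(Z^{\omega\text{-nilp}}_{Q^+,\bn}/\GL_\bn)$ with $Z_{Q^+,\bn}=\{\alpha\omega_i=\omega_j\alpha\}$, and $\iota$ becomes direct image into the affine space $\mathrm{Rep}_\bn(Q^+)$.

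\textbf{Torsion-freeness cannot be obtained from the strata.} Pointwise, the $T\times T_\bn$-fixed locus is indeed the origin. (Your ``up to $\GL_\bn$'' version is false for non-loop arrows, whose $t_\alpha$ is absorbed by scalars at the two endpoints.) But injectivity then needs $G_0^{T\times T_\bn}(Z^{\omega\text{-nilp}}_{Q^+,\bn})$ to be torsion-free over $K^{T\times T_\bn}\point$, and stratifying cannot give this. For every Jordan type $\blambda$ with $\omega\neq 0$, one has $G^{T\times\GL_\bn}_0(Z_\blambda)\cong K^{H_\blambda}\point$, where the stabilizer $H_\blambda$ has smaller rank (a Jordan block ties $z_{ia}$ to $qz_{i,a+1}$). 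Hence this group is killed by nonzero symmetric elements such as $\prod_i\prod_{a\neq b}(1-z_{ia}/(z_{ib}q))$. These strata are torsion, so ``free strata and split extensions'' fails.

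The missing idea is to localize twice, with subtori adapted to the stabilizers.
\begin{enumerate}
\item First invert $K^T\point\setminus 0$. Surjectivity of $q$ forces $T$-invariant representations to have nilpotent $\omega$, so $Z^{\omega\text{-nilp}}_{Q^+,\bn}$ and $Z_{Q^+,\bn}$ agree after localization. Every stratum is free over $K^T\point$ because $H_\blambda\to T$ is surjective.
\item Then use $T'\times A$, with $T'=\mathrm{Ker}\,q$ and $A$ the center of $\GL_\bn$. Here $A$ kills non-loop arrows, and $t_\alpha|_{T'}\neq 1$ kills loops; this is where the geometric assumption enters. So the fixed locus $\{\alpha=0\}$ lies in $Z_{Q^+,\bn}$, whence $G_0(Z_{Q^+,\bn})[S^{-1}]\cong K^{T\times\GL_\bn}\point[S^{-1}]$. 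Meanwhile $T'\times A\subset H_\blambda$ for all $\blambda$, which makes every stratum $S$-torsion-free.
\end{enumerate}
Finally, assembling the strata needs the boundary maps out of $G_1$ to vanish, not just a splitting. The paper gets this from Lemma 2.4.3 of \cite{VV} and the vanishing of odd topological $K$-theory of the strata.
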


\medskip 

\noindent For any $\bn \in \nn$ and $g \in K^{T \times \prod_{i \in I} \GL_{n_i}(\BC)}\point$, we will define elements
$$
\varepsilon_{\bn,g} \in \kha
$$
in Subsection \ref{sub:kha generators}, that correspond under dimensional reduction to tautological classes on the locus where the loops are all zero: $\{\omega_i = 0, \forall i \in I\}$.

\medskip 

\begin{proposition}
\label{prop:generator}

Under Assumption \soft of \eqref{eqn:assumption soft}, the classes
$$
\Big\{\iota(\varepsilon_{\bn,g}) \Big\}_{\bn \in \nn, g \in K^{T \times \prod_{i \in I} \GL_{n_i}(\BC)}\point}
$$
generate $\emph{Im } \iota$.

\end{proposition}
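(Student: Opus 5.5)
We reduce the statement to an explicit description of $\text{Im } \iota$ as the shuffle algebra $\CS^+$ of Definition \ref{def:shuffle int}, and then show that $\CS^+$ is generated over $\ring$ by the images $\iota(\varepsilon_{\bn,g})$. The argument has three steps: compute the images of the generators, show that the image lies in $\CS^+$, and prove generation by induction on a filtration. The third step is the substantive one.

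\emph{Step 1: images of the generators.} By dimensional reduction, $\varepsilon_{\bn,g}$ is the pushforward of the tautological class $g$ from the locus $\{\omega_i=0\}$. Under $\iota$ this becomes an explicit symmetric Laurent polynomial in the $z_{ia}$: it is $g(z)$ multiplied by the $K$-theoretic Euler class of the loop directions $\omega_i$, namely $\prod_{i}\prod_{a\neq b}(1 - q z_{ia}/z_{ib})$ up to a monomial. We compute this formula first.

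\emph{Step 2: the image lies in $\CS^+$.} For a general element of $\kha$, the loop-nilpotency support condition forces $\iota$ of it to vanish on certain wheel loci. These are the loci where chains $z_{ib} = z_{ia} q^{k}$ occur together with arrow parameters. These vanishing conditions are exactly the integral wheel conditions defining $\CS^+$. Since both $\CS^+$ and the subalgebra generated by the $\iota(\varepsilon_{\bn,g})$ are then contained in, or equal to, $\text{Im } \iota$, the proposition amounts to showing that $\CS^+$ is generated by the classes $\iota(\varepsilon_{\bn,g})$.

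\emph{Step 3: generation, via the slope/specialization filtration of \cite{Integral}.} We induct on $\bn$, and within fixed $\bn$ on a filtration defined by orders of vanishing. For $F\in\CS^+_{\bn}$ and each decomposition $\bn=\bn^1+\dots+\bn^k$ into "clusters", consider the specialization map sending $F$ to its leading coefficient as the variables of each cluster collapse along $q$-strings. Roughly, this means setting $z_{i,a}\mapsto y_{c}q^{a}$ inside cluster $c$. The wheel conditions force these leading terms to factor as prescribed products of linear factors times a symmetric polynomial in the cluster variables $y_c$.

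We then show that a suitable product of generators $\iota(\varepsilon_{\bn^1,g_1})*\cdots*\iota(\varepsilon_{\bn^k,g_k})$ realizes any prescribed leading term. After subtracting it, $F$ drops in the filtration, and we continue by induction. The filtration is finite because the degree in each variable is bounded.

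The main obstacle is working over $\ring$ rather than $\field$. Two things must be checked. First, the leading coefficient of a product of generators must equal the prescribed factor exactly, with no extra denominators. Second, the space of admissible leading terms must be exhausted over $\ring$, so that no torsion-type gap appears. This is where Assumption \soft enters: its first bullet ensures that the linear factors $(1 - t_\alpha q^{k} y/y')$ produced by the shuffle product do not coincide with factors forced by the wheel conditions. Its second bullet handles loops, where cancellations between $t_\alpha q^{<0}$ and $t_\beta q^{\geq0}$ could otherwise create unexpected vanishing. We would first treat the case of a single vertex with loops, mimicking the Jordan quiver argument of \cite{Integral}, and then extend cluster by cluster.
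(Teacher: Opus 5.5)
\textbf{There is a genuine gap: your Step 2 does not prove $\text{Im }\iota\subseteq\CS^+$, which is the real content of this proposition.} Your overall architecture matches the paper's: the subalgebra generated by the $\iota(\varepsilon_{\bn,g})=e_{\bn,g}$ lies in $\text{Im }\iota$, Theorem \ref{thm:shuffle int} identifies that subalgebra with $\CS^+$, and everything reduces to the inclusion above. But you have put the weight on the wrong step. Generation of $\CS^+$ (your Step 3) is a separate theorem needing only the geometric assumption \eqref{eqn:assumption geometric}.

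The conditions of Definition \ref{def:shuffle int} are not vanishing conditions. They require $\text{Spec}_P(E)$ to be divisible by the non-unit constants $(1-q^{-1})\cdots(1-q^{-n_i^{(a)}})$, and by linear factors with multiplicities $\chi_{n,n'}(c)$. No statement that $\iota$ of a class vanishes on loci of torus characters can produce these. Take one vertex and no arrows. There are no wheel loci at all, yet the degree $2$ part of $\CS^+$ requires $(1-q^{-1})(1-q^{-2})\mid E(z,zq)$. Moreover $e_{1,g_1}*e_{1,g_2}\in\text{Im }\iota$ specializes at $z_2=qz_1$ to $(1-q^{-1})(1-q^{-2})g_1(z_1)g_2(qz_1)\neq 0$. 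So the claim that support yields ``exactly the integral wheel conditions'' is unsupported. What is missing is a mechanism that converts the support condition into divisibility.

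That mechanism is the paper's adaptation of Yu Zhao's trick. Fix an $I$-composition $P$ and let $U\subset\text{Rep}_{\bn}(Q^+)$ be the following locus:
each $\omega_i$ is block diagonal with companion-type blocks (ones on the superdiagonal, free first column); each $\alpha$ has only the entries $c^{j,b,s}_{i,a,r}$ with $r<s$; and not all coordinates are zero. Thus $U\cong\BA^N\setminus 0$. Nilpotency kills the first columns, and then $\alpha\omega_i=\omega_j\alpha$ kills the $c$'s. Hence $U$ misses $Z^{\omega\text{-nilp}}_{Q^+,\bn}$, and every $E\in\text{Im }\iota$ restricts to $0$ on $U$ by Lemma \ref{lem:one}. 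Fixing the superdiagonal ones specializes the torus variables to the $q$-strings of $P$. The equivariant $K$-theory of $\BA^N\setminus 0$ is that of a point modulo the Koszul class of the coordinate characters, namely $q,\dots,q^{n_i^{(a)}}$ and $x_{jb}q^s/(x_{ia}t_\alpha q^r)$. This gives divisibility of $\text{Spec}_P(E)$ by the constant part of \eqref{eqn:factor} times the factors $1-x_{ia}t_\alpha q^r/(x_{jb}q^s)$ with $r<s$. The same argument with $\dalpha$ in place of $\alpha$ gives the factors $1-x_{ia}t_\alpha q^r/(x_{jb}q^{s+1})$ with $r>s$. This is where \eqref{eqn:assumption soft} genuinely enters: it makes these two families coprime, so the two divisibilities combine into \eqref{eqn:factor}. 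It is not needed in Step 3 as you suggest.

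Two smaller errors:
\begin{itemize}
\item Your Step 1 formula omits the diagonal terms of \eqref{eqn:koszul}, which runs over all $1\le a,b\le n_i$. The resulting non-unit factor $(1-q^{-1})^{n_i}$ is needed for $e_{\bn,g}$ to lie in $\CS^+$ at all, as the finest composition shows.
\item In Step 3, the product realizing the leading term at $P$ must be indexed by the transposed partition $\bm^{(a)}$ of \eqref{eqn:transpose}, not by the clusters themselves. For a single $q$-string of length $n$, your product is $e_{n,g}$, whose specialization vanishes; one needs $e_{1,g_1}*\cdots*e_{1,g_n}$.
\end{itemize}
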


\medskip 

\noindent We hope that Proposition \ref{prop:generator} holds under the weaker geometric assumption \eqref{eqn:assumption geometric}.

\medskip 

\subsection{Big shuffle algebras}
\label{sub:big shuffle}

Corresponding to a quiver with parameters as in Subsection \ref{sub:quivers}, we associate the rational function (the normalization below matches that of \cite{Wheel}, up to certain powers of $t_\alpha$ that will be unimportant for us) 
\begin{equation}
\label{eqn:zeta}
\zeta_{ij}(x) = \left( \frac {1-xq^{-1}}{1-x} \right)^{\delta_{ij}} \prod_{(\alpha : i \rightarrow j) \in \edge} \left(1 - x t_\alpha \right) \prod_{(\alpha : j \rightarrow i) \in \edge} \left(1 - \frac {t_{\alpha}}{xq}\right) \in \ring(x)
\end{equation}
This choice allows us to place associative algebra structures on
\begin{align*}
&\CV^+ = \bigoplus_{\bn = (n_i \geq 0)_{i \in I} \in \nn} R[z_{i1}^{\pm 1},\dots,z_{in_i}^{\pm 1}]_{i \in I}^{\sym} \\
&\CV^+_{\text{loc}} = \bigoplus_{\bn = (n_i \geq 0)_{i \in I} \in \nn} \BF[z_{i1}^{\pm 1},\dots,z_{in_i}^{\pm 1}]_{i \in I}^{\sym}
\end{align*}
(above, ``sym'' refers to polynomials which are color-symmetric, i.e. symmetric in the variables $z_{i1},\dots,z_{in_i}$ for each $i \in I$ separately). Explicitly, the multiplication on the sets above is given by
\begin{equation}
\label{eqn:mult}
E( z_{i1}, \dots, z_{i n_i})_{i \in I} * E'(z_{i1}, \dots,z_{i n'_i})_{i \in I} = 
\end{equation}
$$
\textrm{Sym} \left[ \frac {E(z_{i1}, \dots, z_{in_i}) E'(z_{i,n_i+1}, \dots, z_{i,n_i+n'_i})}{\bn! \bn'!}
\prod_{i,j \in I} \mathop{\prod_{1 \leq a \leq n_i}}_{n_j < b \leq n_j+n_j'} \zeta_{ij} \left( \frac {z_{ia}}{z_{jb}} \right) \right]
$$
with ``Sym'' in \eqref{eqn:mult} referring to summing over the
\begin{equation*}
(\bn+\bn')! := \prod_{i\in I} (n_i+n'_i)!
\end{equation*}
permutations of the variables $\{z_{i1}, \dots, z_{i,n_i+n'_i}\}$ for each $i$ independently. We call $\CV^+$ the \emph{big shuffle algebra} and $\CV^+_{\text{loc}}$ its localized version. They are related to the constructions in Subsection \ref{sub:k-ha} by the existence of natural algebra isomorphisms 
\begin{align}
&\kzero \cong \CV^+ \label{eqn:iota zero} \\
&\kzeroloc \cong \CV^+_{\text{loc}} \label{eqn:iota zero loc}
\end{align}
(see \cite{KS, Padurariu}) where the subscript ``loc'' refers to tensoring with $\field$ over $\ring$. Formula \eqref{eqn:k-ha to shuffle} then yields algebra homomorphisms
\begin{align}
&\kha \rightarrow \CV^+ \label{eqn:iota} \\
&K_{\tQ,\tW,\text{loc}}^T \rightarrow \CV^+_{\text{loc}} \label{eqn:iota loc}
\end{align}
Our Theorem \ref{thm:injective} proves \eqref{eqn:iota} to be injective under the geometric assumption \eqref{eqn:assumption geometric}, while \cite{VV} proved that \eqref{eqn:iota loc} is injective under Assumption \eqref{eqn:assumption hard} (see \cite{Wheel}). 
 
\medskip

\subsection{Small shuffle algebras}
\label{sub:small shuffle}

We now introduce certain subalgebras of $\CV^+$, $\CV^+_{\text{loc}}$.

\medskip 

\begin{definition} 
\label{def:shuffle loc}

Let $\CS^+_{\emph{loc}} \subset \CV^+_{\emph{loc}}$ denote the set of $E(z_{i1},\dots,z_{in_i})_{i \in I}$ such that
$$
E\Big|_{z_{i2} = qz_{i1}} \quad \text{ is divisible by } \short (z_{jb} - z_{i1} t_\alpha)
$$
for all $i,j \in I$ and any $b$ (where we require $b>2$ if $i=j$).

\end{definition} 

\medskip 

\noindent The following was the main result of \cite{Wheel}.

\medskip 

\begin{theorem}
\label{thm:shuffle loc}

Under Assumption \hard of \eqref{eqn:assumption hard}, the set $\CS^+_{\emph{loc}}$ coincides with the
\begin{equation}
\label{eqn:spherical}
\field\text{-algebra generated by }\left\{ e_{i,k} = z_{i1}^k \right\}_{i \in I, k \in \BZ} \subset \CV^+
\end{equation}
where $z_{i1}^k$ is interpreted as a Laurent polynomial in a single variable of color $i$.

\end{theorem}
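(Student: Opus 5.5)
The proof will establish two inclusions between $\CS^+_{\text{loc}}$ and the subalgebra $\CA$ generated by the $e_{i,k}$: first that $\CA \subseteq \CS^+_{\text{loc}}$, then the converse, which is the real content. For the first inclusion I would check that $\CS^+_{\text{loc}}$ is closed under the product \eqref{eqn:mult}, since the single-variable elements $z_{i1}^k$ trivially satisfy the wheel conditions of Definition \ref{def:shuffle loc} (there is nothing to specialize).

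Closure under the product goes term by term in the symmetrization. Fix a term of $E * E'$ and specialize $z_{i2}=qz_{i1}$. If both variables land in $E$, or both land in $E'$, divisibility follows from the hypothesis on that factor, together with the factors $\zeta_{ij}$ that connect the variable $z_{jb}$ to the pair. If $z_{i1}$ lies in $E$ and $z_{i2}$ lies in $E'$, the factor $\zeta_{ii}(z_{i1}/z_{i2})$ contains $1-z_{i1}q^{-1}/z_{i2}$, which vanishes at $z_{i2}=qz_{i1}$, so the term is zero. In the reverse placement, $z_{i2}$ in $E$ and $z_{i1}$ in $E'$, the needed linear factors $(z_{jb}-z_{i1}t_\alpha)$ come from the cross factors $\zeta$ between the pair and $z_{jb}$. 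These are $(1-xt_\alpha)$ and $(1-t_\alpha/(xq))$, evaluated at $z_{i2}=qz_{i1}$. The pole $1/(1-x)$ of $\zeta_{ii}$ cancels after symmetrization in the standard way. This case bookkeeping is routine.

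The converse, $\CS^+_{\text{loc}} \subseteq \CA$, is the main obstacle. I would argue by induction on $\bn$ with respect to a filtration. For each $\bn$, I would define specialization (or ``slope''/leading-term) maps on $\CS^+_{\text{loc},\bn}$. The natural choice is to degenerate the variables along geometric strings, $z_{i,a}\mapsto w q^{s}\prod t$, following the wheel structure. Equivalently, I would take limits $z \to 0,\infty$ in groups of variables to extract leading terms in a coproduct-type expansion. This expresses $\CS^+_{\text{loc}}$ as filtered, with associated graded pieces that embed into tensor products of smaller shuffle algebras. I would then show two things:
\begin{enumerate}
\item The wheel conditions force the graded pieces of $\CS^+_{\text{loc}}$ to lie in the image of products of lower-degree pieces. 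Here Assumption \hard is used to guarantee that the only zeros of the specializations are those forced by the $\zeta$ factors, with no accidental coincidences $\prod t_\alpha^{x}t_{\dalpha}^{y}=1$.
\item The spherical subalgebra $\CA$ already surjects onto these graded pieces, by explicitly producing products of $e_{i,k}$ with prescribed leading terms.
\end{enumerate}
Comparing dimensions degree by degree, or equivalently running a triangularity argument, then gives equality.

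Concretely, the hardest step is the upper bound: an element of $\CS^+_{\text{loc}}$ that vanishes under all specializations of the relevant filtration level must vanish identically. I would prove it with a pairing argument. The idea is to use the pairing of $\CS^+$ with the opposite shuffle algebra, realized as a contour-integral pairing, which is where the analytic form \eqref{eqn:assumption} of the genericity assumption helps. One shows that any $E\in\CS^+_{\text{loc}}$ orthogonal to all products of $e_{i,k}$'s is zero: the wheel conditions guarantee that moving contours picks up no residues beyond those accounted for by lower-degree terms. Nondegeneracy of the pairing restricted to $\CA$ then yields $\CS^+_{\text{loc}}=\CA$.
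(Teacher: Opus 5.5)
The paper does not prove this statement; it quotes it as the main result of \cite{Wheel}. So your proposal has to stand on its own. The easy inclusion is essentially fine, but it has a slip: you have the two mixed placements reversed. Since $\zeta_{ii}(x)$ contains $1-xq^{-1}$, we have $\zeta_{ii}(q)=0$. Hence the summand of $E*E'$ in which $z_{i2}=qz_{i1}$ is a variable of $E$ and $z_{i1}$ a variable of $E'$ vanishes identically. The summand with $z_{i1}$ in $E$ and $z_{i2}$ in $E'$ only sees $\zeta_{ii}(q^{-1})\neq 0$, and it is this summand whose factors $z_{jb}-z_{i1}t_\alpha$ must be extracted from the cross factors. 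They come from $\zeta_{ij}(z_{i1}/z_{jb})$ if $z_{jb}$ is a variable of $E'$, or from $\zeta_{ji}(z_{jb}/z_{i2})$ if it is a variable of $E$. With this correction, and noting that generic points of the wheel locus avoid the poles of individual summands (as $t_\alpha\neq 1,q$), this part goes through.

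The genuine gap is the inclusion $\CS^+_{\text{loc}}\subseteq\CA$.

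\emph{The filtration steps.} Steps (1) and (2) restate the theorem rather than prove it. ``Comparing dimensions degree by degree'' also has no content as stated. Pieces of fixed horizontal and vertical degree are infinite-dimensional: for $\bn=2\bs^i$, Definition \ref{def:shuffle loc} imposes no condition at all, so the degree $0$ piece consists of all symmetric Laurent polynomials of total degree $0$ in $z_{i1},z_{i2}$. Moreover, for an arbitrary quiver there is no explicit count to compare against.

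\emph{The pairing step.} This does not close the argument either. Suppose the pairing is the unit-torus integral that \eqref{eqn:assumption} permits. By color-symmetry, $\langle E,f_{i_1,k_1}*\dots*f_{i_n,k_n}\rangle$ is then, up to a nonzero constant and a relabeling, a Laurent coefficient on the torus of $E(z)/\prod_{a<b}\zeta_{i_bi_a}(z_b/z_a)$. The poles of this function, $z_b/z_a\in\{q,t_\alpha^{-1},t_\alpha q^{-1}\}$, lie off the torus. Hence orthogonality to $\CA^-$, the subalgebra generated by the $f_{i,k}$ (not the $e_{i,k}$), forces $E=0$ for \emph{every} $E\in\CV^+_{\text{loc}}$, wheel conditions or not. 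The orthogonality statement you single out as the hardest step is therefore elementary and cannot be where the wheel conditions enter. Together with nondegeneracy on $\CA\otimes\CA^-$, it only yields $\CA\subseteq\CS^+_{\text{loc}}\hookrightarrow(\CA^-)^*$. That says nothing about equality in infinite-dimensional degrees, since $\CV^+_{\text{loc}}$, in general strictly larger, embeds there too.

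\emph{What is missing.} You need two ingredients:
\begin{itemize}
\item[(a)] A step that genuinely uses the wheel conditions. The contour-deformation statement you mention could serve, but deployed to show that the integral is a bialgebra pairing on $\CS^{\geq}_{\text{loc}}\otimes\CS^{\leq}_{\text{loc}}$ compatible with the coproduct.
\item[(b)] A finer filtration (e.g.\ by slope) with finite-dimensional pieces compatible with that pairing. A chain of injections then forces equality of dimensions without computing any of them.
\end{itemize}

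Finally, the specialization-and-triangularity scheme you allude to is what the paper does for the integral algebra in Appendix \ref{sec:a} (Claim \ref{claim}). It works there only because the generators $e_{\bn,g}$ vanish on $q$-strings. Products of the $z_{i1}^k$ do not: for a vertex without loops, $e_{i,0}*e_{i,0}$ equals $1+q^{-1}$ at $z_{i2}=qz_{i1}$. So no such triangularity is available here.
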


\medskip 

\noindent As for the integral shuffle algebra, we propose the following generalization of the construction of \cite[Definition 3.2]{Integral} (we also note the related type A constructions of \cite{FT, TsymbaliukPBW}, though we have not proved that they are equivalent to ours). We will refer to
\begin{equation}
\label{eqn:composition}
P = \left\{ n_i = n_i^{(1)}+\dots+n_i^{(d_i)} \right\}_{i \in I}
\end{equation}
as an $I$-composition of $\bn = (n_i)_{i \in I}$. We further call $P$ an $I$-partition if we have
$$
n_i^{(1)} \geq \dots \geq n_i^{(d_i)} > 0
$$
for all $i \in I$.

\medskip 

\begin{definition} 
\label{def:shuffle int}

Let $\CS^+ \subset \CV^+$ denote the set of Laurent polynomials
$$
E(z_{i1},\dots,z_{in_i})_{i \in I} \in \ring[z_{i1}^{\pm 1},\dots,z_{in_i}^{\pm 1}]^{\emph{sym}}_{i \in I}
$$
such that for any $I$-composition $P$ as in \eqref{eqn:composition}, the specialization
\begin{equation}
\label{eqn:specialization}
\emph{Spec}_P(E) = E \left(x_{i1},x_{i1}q,\dots,x_{i1}q^{n_i^{(1)}-1}, \dots, x_{id_i},x_{id_i}q,\dots,x_{id_i} q^{n_i^{(d_i)}-1} \right)_{i \in I}
\end{equation}
is divisible in the ring $\ring[z_{i1}^{\pm 1},\dots,z_{in_i}^{\pm 1}]^{\emph{sym}}_{i \in I}$ by
\begin{multline}
\label{eqn:factor}
\prod_{i \in I} \prod_{a=1}^{d_i} \left[(1-q^{-1})(1-q^{-2})\dots(1-q^{-n_i^{(a)}}) \right] \\ \prod_{(\alpha : i \rightarrow j) \in \edge} \mathop{\prod_{1 \leq a \leq d_i}}_{1 \leq b \leq d_j} \prod_{c \in \BZ} \left(1- \frac {x_{ia}t_{\alpha}}{x_{jb}q^{c}} \right)^{\chi_{n_i^{(a)},n_j^{(b)}}(c)}
\end{multline}
where for any $n,n' \geq 1$, let $\chi_{n,n'} : \BZ \rightarrow \BN$ be the function with the following graph

\begin{picture}(400,120)(0,-30)
\label{pic:par}

\put(20,0){\circle*{5}}\put(40,0){\circle*{5}}\put(60,0){\circle*{5}}\put(80,0){\circle*{5}}\put(100,20){\circle*{5}}\put(120,40){\circle*{5}}\put(140,60){\circle*{5}}\put(160,60){\circle*{5}}\put(180,60){\circle*{5}}\put(200,60){\circle*{5}}\put(220,60){\circle*{5}}\put(240,40){\circle*{5}}\put(260,20){\circle*{5}}\put(280,0){\circle*{5}}\put(300,0){\circle*{5}}\put(320,0){\circle*{5}}

\put(0,0){\vector(1,0){340}}
\put(65,-15){$-n+1$}
\put(278,-15){$n'$}

\put(-10,-3){$0$}\put(-10,17){$1$}\put(-10,37){$\vdots$}\put(-10,57){$\min(n,n')-\delta_{nn'}$}

\end{picture}

\end{definition} 

\medskip 

\noindent We call $\CS^+$ thus defined the \emph{integral shuffle algebra}. Note that due to the color-symmetry of $E$, Definition \ref{def:shuffle int} would not change if we restricted attention only to $I$-partitions $P$. The following generalization of Proposition 3.9 of \cite{Integral} will be proved in Appendix A, Section \ref{sec:a}.

\medskip 

\begin{theorem}
\label{thm:shuffle int}

Under the geometric assumption \eqref{eqn:assumption geometric}, the set $\CS^+$ coincides with the $\ring$-subalgebra of $\CV^+$ generated by
\begin{equation}
\label{eqn:special}
e_{\bn,g} = g(z_{i1},\dots,z_{in_i})_{i \in I} \prod_{i \in I} \prod_{1 \leq a , b \leq n_i} \left(1 - \frac {z_{ia}}{z_{ib}q}\right)
\end{equation}
as $\bn = (n_i)_{i \in I}$ ranges over $\nn$ and $g$ ranges over $\ring[z_{i1}^{\pm 1},\dots,z_{in_i}^{\pm 1}]^{\emph{sym}}_{i \in I}$.

\end{theorem}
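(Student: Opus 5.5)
We have to show two inclusions: the elements $e_{\bn,g}$ lie in $\CS^+$ and $\CS^+$ is closed under $*$ (so the generated subalgebra sits inside $\CS^+$), and conversely every element of $\CS^+$ is an $\ring$-linear combination of products of the $e_{\bn,g}$. This mirrors the argument of \cite[Proposition 3.9]{Integral} for the Jordan quiver.

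\textbf{Step 1: the generators and products lie in $\CS^+$.} For $e_{\bn,g}$, specialize at an $I$-composition $P$. The factor $\prod_{a,b}(1 - z_{ia}/(z_{ib}q))$ restricted to a single string $x, xq,\dots,xq^{n-1}$ yields, from the diagonal and near-diagonal pairs, exactly $\prod_{s=1}^{n}(1-q^{-s})$ up to units and powers of $x$. Every arrow factor $1 - x_{ia}t_\alpha/(x_{jb}q^c)$ comes from off-diagonal products. For this I would argue that $\zeta$ contributes these factors in products; so really the claim is about closure. For a product $E * E'$ with $E,E'\in\CS^+$, I would specialize each summand of the symmetrization. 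A string of $P$ gets split into a part fed to $E$ and a part fed to $E'$, and each summand is nonzero only if each string is split as an initial segment going to one factor. Otherwise a factor $\zeta_{ii}(z_{ia}/z_{ib})$ with $z_{ib} = z_{ia}q$ vanishes through its numerator $1-xq^{-1}$. The divisibility then follows by counting: the $q$-factorials of the substrings combine with the pole-free parts of $\zeta_{ii}$ at neighbouring string points to produce $(1-q^{-1})\cdots(1-q^{-n})$. Meanwhile the arrow factors $(1-xt_\alpha)$ and $(1-t_\alpha/(xq))$ in $\zeta_{ij},\zeta_{ji}$, evaluated on substrings, add up to exactly the trapezoid multiplicities $\chi_{n,n'}(c)$. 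The key combinatorial identity is that $\chi_{n,n'}$ equals $\chi$ of the pieces plus the cross-terms. I would verify it by comparing both sides as piecewise linear functions of $c$.

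\textbf{Step 2: the reverse inclusion, by filtration and leading terms.} Filter $\CS^+_{\bn}$ by the \emph{maximal} $I$-partition $P$ (in dominance order) for which $\text{Spec}_P$ is nonzero; the finest partition $(1,\dots,1)$ gives no condition. For each $I$-partition $P$ with parts $\bn^{(1)},\dots$, consider the products $e_{\bn^{(1)},g_1} * e_{\bn^{(2)},g_2} * \cdots$. I would show two things about their specializations at $P$:
\begin{itemize}
\item they vanish at all partitions strictly larger than $P$;
\item their specialization at $P$ equals the factor \eqref{eqn:factor} times an arbitrary element of $\ring[x_{ia}^{\pm1}]^{\text{sym within equal parts}}$.
\end{itemize}
Here Theorem \ref{thm:shuffle loc} is not needed; the surjectivity onto the quotient comes from freely choosing the $g$'s. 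Given $E \in \CS^+$ whose top specialization is at $P$, the divisibility in Definition \ref{def:shuffle int} says $\text{Spec}_P(E) = \eqref{eqn:factor}\cdot h$ with $h$ integral. Subtracting a combination of the products above kills $\text{Spec}_P$, and we then induct downward. At the bottom, the kernel of all specializations is zero.

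\textbf{Main obstacle.} I expect the main difficulty to be the exactness of the divisibility: showing that $\text{Spec}_P$ of the product equals \eqref{eqn:factor} times a \emph{unit}, so that no denominators appear over $\ring$. The same issue arises in showing that $\text{Spec}_P(E)=0$ at all larger partitions forces $E$ to be determined by lower data. Both require a wheel-type argument: if the top specialization vanishes, the next one is divisible by extra factors. To get this, I would use the geometric assumption \eqref{eqn:assumption geometric}, which guarantees that the loop factors $1-t_\alpha q^{c}$ are coprime to the $q$-factorials. This is what allows dividing in the polynomial ring rather than its localization. I would first try the Jordan-quiver computation of \cite{Integral} one arrow at a time, since the factor \eqref{eqn:factor} is multiplicative over arrows.
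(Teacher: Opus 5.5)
Your architecture matches the paper's: closure of $\CS^+$ under $*$ (each $q$-string splits into an initial segment for $E$ and a final one for $E'$), then a descending induction over $I$-partitions in which the top nonzero specialization is matched by products of generators. The matching step, however, fails as written.

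The generator $e_{\bm,g}$ contains $\prod_{a\neq b}(1-z_{ia}/(z_{ib}q))$, so it vanishes whenever two variables of the same color are in ratio $q$. Hence $\mathrm{Spec}_P(e_{\bn,g})=0$ for every $P$ with a part $>1$. Your Step 1 claim that a string yields $\prod_s(1-q^{-s})$ is false; the generators lie in $\CS^+$ for this trivial reason, together with $\chi_{1,1}\equiv 0$ at the finest partition. Consequently, in a product of generators the $n_i^{(b)}$ points of a string must go to $n_i^{(b)}$ \emph{distinct} factors. So the product adapted to $P$ must use the conjugate partition: $e_{\bm^{(1)},g_1}*e_{\bm^{(2)},g_2}*\cdots$ with $m_i^{(a)}=|\{b: n_i^{(b)}\geq a\}|$, where the zeros $\zeta_{ii}(q)=0$ force $x_{ib}q^{a-1}$ into the $a$-th factor. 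With factors indexed by the parts of $P$, as you propose, already the one-part partition of $2\bs^i$ fails. Your product there is $e_{2\bs^i,g}$, whose specialization is $0$. Yet, at a vertex without loops, $e_{\bs^i,1}*e_{\bs^i,1}=(1-q^{-1})(1-q^{-2})$ lies in $\CS^+$ and has nonzero specialization.

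Even with the correct factors, $\mathrm{Spec}_P$ of the product is not \eqref{eqn:factor} times an arbitrary block-symmetric polynomial. It equals $g_1(x)g_2(xq)\cdots\Pi_1\Pi_2\Pi_3$, where $\Pi_1\Pi_2$ is \eqref{eqn:factor} and $\Pi_3=\prod_i\prod_{a\neq b}\prod_{s=1+\max(0,n_i^{(a)}-n_i^{(b)})}^{n_i^{(a)}}(1-x_{ib}/(x_{ia}q^s))$ is forced by the cross terms between distinct strings of the same color.

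So before subtracting you must show that $\Pi_3$ divides $\mathrm{Spec}_P(E)$. This is where the hypothesis $\mathrm{Spec}_{P'}(E)=0$ for $P'>P$ is really used. Setting $x_{ib}=x_{ia}q^s$ turns $\mathrm{Spec}_P(E)$ into a further specialization of $\mathrm{Spec}_{P'}(E)$, where $P'>P$ replaces the two parts by $n_i^{(b)}+s$ and $n_i^{(a)}-s$.

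You then need two further ingredients:
\begin{itemize}
\item $\Pi_1\Pi_2$ must be coprime to $\Pi_3$. This, rather than any ``exactness over $\ring$'', is where $t_\alpha\notin q^{\BZ}$ for loops enters.
\item You need the lemma of \cite{Integral} that every $G$ symmetric within blocks of equal parts is a sum of products $g_1(x_1,\dots,x_{m_1})g_2(x_1q,\dots,x_{m_2}q)\cdots$ with symmetric $g_a$. Your ``freely choosing the $g$'s'' silently assumes this.
\end{itemize}

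A minor point: the $\chi$-statement in Step 1 is only an inequality, not an identity. For $(k,k',\ell,\ell')=(3,3,2,1)$ and $c=0$ one gets $2<3$, but the inequality is all that divisibility requires.
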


\medskip 

\noindent It was shown in \cite{Wheel} that, under \text{Assumption \hard} of \eqref{eqn:assumption hard}, the map \eqref{eqn:iota loc} induces an isomorphism
\begin{equation}
\label{eqn:k shuffle loc}
K_{\tQ,\tW,\text{loc}}^T \xrightarrow{\sim} \CS^+_{\text{loc}}
\end{equation}
The following analogous result follows from Theorem \ref{thm:injective}, Proposition \ref{prop:generator}, Theorem \ref{thm:shuffle int}, together with the observation that $\iota(\varepsilon_{\bn,g}) = e_{\bn,g}$ for all $\bn,g$.

\medskip 

\begin{theorem}
\label{thm:geometry}

Under Assumption \soft of \eqref{eqn:assumption soft}, the map \eqref{eqn:iota} induces an isomorphism
\begin{equation}
\label{eqn:k shuffle int}
\ekha\xrightarrow{\sim} \CS^+
\end{equation}
We conjecture that \eqref{eqn:k shuffle int} holds even in the absence of Assumption \soft.

\end{theorem}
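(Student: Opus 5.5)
The plan is to assemble Theorem \ref{thm:geometry} from Theorem \ref{thm:injective}, Proposition \ref{prop:generator} and Theorem \ref{thm:shuffle int}, after first checking that the hypotheses are compatible. Assumption \soft{} should imply the geometric assumption \eqref{eqn:assumption geometric}. Both include that $q$ is not a root of unity. For a loop $\alpha \in \edge_{\text{loop}}$ at a vertex $i$, the first bullet of \eqref{eqn:assumption soft} with $j=i$ and $\beta=\alpha$ says that $t_\alpha q^{\BZ_{<0}}$ and $t_\alpha q^{\BZ_{\geq 0}}$ are disjoint. This disjointness is automatic, so it yields nothing. Instead I would use the second bullet with $\beta = \alpha$. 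If $t_\alpha = q^m$, pick $c<0$ and $c' \geq 0$ with $c' - c \neq 0$. Then choose $x,y \in \BZ$, not both zero, with $x(m+c) = y(m+c')$; this is possible since both exponents are integers, for instance $x = m+c'$ and $y = m+c$, not both zero because $c \neq c'$. This contradicts the second bullet. Hence $t_\alpha \notin q^{\BZ}$, and the geometric assumption holds.

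Injectivity then follows from Theorem \ref{thm:injective}: the map $\iota : \kha \rightarrow \CV^+$ of \eqref{eqn:iota} is injective. It remains to identify the image with $\CS^+$.

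For the image, I would use the identity $\iota(\varepsilon_{\bn,g}) = e_{\bn,g}$. I expect to verify it by unwinding the definition of $\varepsilon_{\bn,g}$ in Subsection \ref{sub:kha generators}. This is a dimensional-reduction class supported on $\{\omega_i=0\}$, and its pushforward to the zero-potential $K$-HA picks up the $K$-theoretic Euler class of the normal directions of the loops $\omega_i$. That Euler class is $\prod_{a,b}(1 - z_{ia}/(z_{ib}q))$, which matches \eqref{eqn:special}. Here $g$ ranges over $K^{T\times\prod \GL_{n_i}}\point = \ring[z^{\pm1}]^{\text{sym}}$, so the index sets agree.

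With this identity, Proposition \ref{prop:generator} says that $\text{Im }\iota$ is the $\ring$-algebra generated by the $e_{\bn,g}$. Theorem \ref{thm:shuffle int}, valid under the geometric assumption, says this generated algebra is exactly $\CS^+$. Hence $\iota$ is a bijection onto $\CS^+$, and it is an algebra map.

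The main obstacle is the Euler class computation behind $\iota(\varepsilon_{\bn,g}) = e_{\bn,g}$. The sign and normalization conventions, namely the character $q$ on $\omega_i$ versus its dual, and whether the diagonal terms $a=b$ contribute the factor $(1-q^{-1})^{n_i}$, must match \eqref{eqn:special} exactly. I would check this first in the case of a single vertex with $n=1$.
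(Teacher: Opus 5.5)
Your proposal is correct and follows the paper's own argument: the paper deduces Theorem \ref{thm:geometry} by combining Theorem \ref{thm:injective}, Proposition \ref{prop:generator}, Theorem \ref{thm:shuffle int} and the identity $\iota(\varepsilon_{\bn,g}) = e_{\bn,g}$. That identity is proved in Appendix B exactly as you sketch, via the Koszul class of the linear subspace $\{\omega = 0\} \subset \mathrm{Rep}_{\bn}(Q^+)$ with characters $z_{ib}q/z_{ia}$, where the diagonal terms $a=b$ are included. Your explicit check that Assumption \soft{} implies the geometric assumption \eqref{eqn:assumption geometric}, using the second bullet with $\beta = \alpha$, is a worthwhile addition, since the paper uses this implication only tacitly.
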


\bigskip 

\section{Doubles}
\label{sec:double}

\medskip 

\subsection{The negative shuffle algebras}

The (big) shuffle algebras with superscript $-$ will refer to the opposites of the ones introduced in the previous Section
\begin{align*}
&\CV^- = \CV^{+,\text{op}}, \qquad \CV^-_{\text{loc}} = \CV^{+,\text{op}}_{\text{loc}} \\
&\CS^- = \CS^{+,\text{op}}, \qquad \CS^-_{\text{loc}} = \CS^{+,\text{op}}_{\text{loc}}
\end{align*}
We will denote the analogous elements to \eqref{eqn:spherical} as $f_{i,k} \in \CV^-_{\text{loc}}$ and to \eqref{eqn:special} as
\begin{equation}
\label{eqn:special transpose}
f_{\bn,g} \in \CV^-
\end{equation}
They are given by the exact same formulas as their $e$ counterparts, but are viewed as elements in the opposite shuffle algebras $\CV^-$ and $\CV^-_{\text{loc}}$. We define the  horizontal and vertical gradings on shuffle algebras (valued in $\zz$ and $\BZ$, respectively) as
\begin{align}
&\hdeg E = \bn, \quad \ \ \vdeg E = k \label{eqn:deg 1} \\
&\hdeg F = -\bn, \quad \vdeg F = k \label{eqn:deg 2}
\end{align}
for any $E(z_{i1},\dots,z_{in_i})_{i \in I} \in \CV^+_{\text{loc}}$ and $F(z_{i1},\dots,z_{in_i})_{i \in I} \in \CV^-_{\text{loc}}$ of homogeneous degree $k$. The following proposition is straightforward, and we leave it as an exercise.

\medskip 

\begin{proposition}
\label{prop:o}

Let us consider the following rational functions in place of \eqref{eqn:zeta}
\begin{equation}
\label{eqn:ozeta}
\ozeta_{ij}(x) = \left( \frac {1-x}{1-xq} \right)^{\delta_{ij}} \prod_{\alpha : j \rightarrow i} \frac {1 - \frac {t_{\alpha}}{xq}}{1 - \frac {t_{\alpha}}{x}}
\end{equation}
Because $\frac {\ozeta_{ij}(x)}{\ozeta_{ji}(x^{-1})} = \frac {\zeta_{ij}(x)}{\zeta_{ji}(x^{-1})}$, the map
\begin{equation}
\label{eqn:xi}
\Xi : \field [z_{i1}^{\pm 1},\dots,z_{in_i}^{\pm 1}]_{i \in I}^{\emph{sym}} \longrightarrow \frac {\field [z_{i1}^{\pm 1},\dots,z_{in_i}^{\pm 1}]_{i \in I}^{\emph{sym}} \cdot \prod^{i \in I}_{1\leq a \neq b \leq n_i} (z_{ia} - z_{ib})}{\prod^{i \in I}_{1\leq a \neq b \leq n_i} (z_{ia} - qz_{ib})\prod^{\alpha : i \rightarrow j}_{a \leq n_i, b \leq n_j} (z_{ia}t_{\alpha} - z_{jb})}
\end{equation}
of multiplication by
\begin{equation}
\label{eqn:division}
\frac{\prod^{i \in I}_{1 \leq a \neq b \leq n_i} \left(1 - \frac {z_{ia}}{z_{ib}} \right)}{q^{-\sum_{i \in I} {n_i \choose 2}} \prod^{i \in I}_{1 \leq a \neq b \leq n_i} \left(1 - \frac {z_{ia}q}{z_{ib}} \right)  \prod^{\alpha : i \rightarrow j}_{1 \leq a \leq n_i, 1\leq b \leq n_j,(i,a) \neq (j,b)} \left(1 - \frac {z_{ia}t_{\alpha}}{z_{jb}} \right)}
\end{equation}
intertwines the shuffle product $*$ of \eqref{eqn:mult} in the LHS with the shuffle product $\bar{*}$ in the RHS, the latter being defined using the functions \eqref{eqn:ozeta} instead of \eqref{eqn:zeta}.

\end{proposition}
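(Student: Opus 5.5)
The plan is to reduce everything to a ``gauge transformation'' lemma for symmetrized products. Write $\Xi$ as multiplication by a function $\Delta_{\bn}(z)$, namely the expression \eqref{eqn:division} in the variables of degree $\bn$. The key observation is that $\Delta_{\bn}$ factors as a product over pairs of variables $\{(i,a),(j,b)\}$ of a two-variable factor; up to the constant power of $q$, it is the product over ordered pairs of $\phi_{ij}(z_{ia}/z_{jb})$. Here
\[
\phi_{ij}(x)=\frac{\bigl((1-x)/(1-xq)\bigr)^{\delta_{ij}}}{\prod_{\alpha:i\to j}(1-xt_\alpha)},
\]
with the diagonal terms excluded. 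The diagonal loop factors $1-t_\alpha$, for $\alpha$ a loop and $(i,a)=(j,b)$, are excluded explicitly in \eqref{eqn:division}, so they only contribute constants in any case.

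First, I will verify by direct substitution that
\[
\ozeta_{ij}(x)=\zeta_{ij}(x)\,\phi_{ij}(x)^{-1}\phi_{ji}(x^{-1})^{-1}\cdot(\text{constant}),
\]
possibly up to monomial factors that cancel after the full product over ordered pairs. Concretely, I multiply \eqref{eqn:zeta} by the inverse factors $(1-xq)/(1-x)\cdot(1-x^{-1}q)/(1-x^{-1})$ on the diagonal, and by $\prod_{\alpha:i\to j}(1-xt_\alpha)^{-1}\prod_{\alpha:j\to i}(1-t_\alpha/x)^{-1}$ on the arrows. This gives
\[
\frac{1-x}{1-xq}\prod_{\alpha:j\to i}\frac{1-t_\alpha/(xq)}{1-t_\alpha/x}
\]
up to a factor of the form $-x^{\pm1}q^{\pm1}$ coming from $(1-x^{-1}q)/(1-x^{-1})$ versus $(1-xq^{-1})/(1-x)$. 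The stated identity $\ozeta_{ij}(x)/\ozeta_{ji}(x^{-1})=\zeta_{ij}(x)/\zeta_{ji}(x^{-1})$ is the consistency check that the ratio $\zeta/\ozeta$ is symmetric under swapping the two variables. That symmetry is exactly what is needed for the ratio to be absorbed into a symmetric function $\Delta$.

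Second, I will compute $\Xi(E)\,\bar{*}\,\Xi(E')$. Split the variables of degree $\bn+\bn'$ into the first block (size $\bn$) and the second block (size $\bn'$). Then
\[
\Delta_{\bn+\bn'}=\Delta_{\bn}(\text{block 1})\,\Delta_{\bn'}(\text{block 2})\prod_{\text{cross pairs}}\phi_{ij}(z_{ia}/z_{jb})\,\phi_{ji}(z_{jb}/z_{ia}).
\]
The cross-pair factor times $\ozeta$ equals $\zeta$ by the first step. Since $\Delta_{\bn+\bn'}$ is color-symmetric, it can be pulled outside Sym, giving $\Xi(E*E')=\Xi(E)\,\bar{*}\,\Xi(E')$. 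The powers of $q$ work out because $\binom{n+n'}{2}=\binom n2+\binom{n'}2+nn'$, which matches the constants produced by the cross pairs.

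The main obstacle is the bookkeeping of the monomial and constant factors. This means checking that the cross factors $-x q^{\pm1}$ (which arise from rewriting $1-q/x$ as $-(q/x)(1-x/q)$) multiply over all cross pairs of the same color to exactly the prescribed $q$-power. It also means checking that any leftover monomials $z_{ia}/z_{jb}$ cancel in pairs, or are symmetric, so that they commute with Sym. I would handle this by checking the case of one variable in each block first and then using multiplicativity.

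Finally, I will note that the target space in \eqref{eqn:xi} is exactly where $\Delta_{\bn}E$ lands, by reading off the numerator and denominator of $\Delta_{\bn}$. Bijectivity onto that space is not asserted, so nothing further is required.
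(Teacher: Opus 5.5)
Your overall strategy is the natural one; the paper gives no proof and leaves this as an exercise. You write $\Xi$ as multiplication by $\Delta_{\bn}=q^{\sum_i\binom{n_i}{2}}\prod_{(i,a)\neq(j,b)}\phi_{ij}(z_{ia}/z_{jb})$, pull the color-symmetric $\Delta_{\bn+\bn'}$ inside Sym, and compare cross-pair factors. However, the key identity of your first step points the wrong way and is false as written. What is needed is $\Delta_{\bn+\bn'}\prod_{\text{cross}}\zeta_{ij}=\Delta_{\bn}\Delta_{\bn'}\prod_{\text{cross}}\ozeta_{ij}$, that is,
\[
\ozeta_{ij}(x)=q^{\delta_{ij}}\,\phi_{ij}(x)\,\phi_{ji}(x^{-1})\,\zeta_{ij}(x).
\]
It is not $\ozeta_{ij}=\zeta_{ij}\phi_{ij}^{-1}\phi_{ji}^{-1}\cdot(\text{const})$, and it is not ``cross factor times $\ozeta$ equals $\zeta$''. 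Already for $i\neq j$ with one arrow $\alpha:i\to j$ and none back, $\zeta_{ij}\phi_{ij}^{-1}\phi_{ji}^{-1}=(1-xt_\alpha)^2$, which is not $\ozeta_{ij}=1$ up to a monomial. Your ``concrete'' computation is also internally inconsistent. On the arrows you multiply by $\phi_{ij}(x)\phi_{ji}(x^{-1})$, which is correct. On the diagonal you multiply by its inverse, which yields $q(1-xq^{-1})^2(1-xq)/(1-x)^3$, not $(1-x)/(1-xq)$ times a monomial.

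With the correct factor, the diagonal closes exactly. Since $\frac{1-xq^{-1}}{1-x^{-1}q}=-xq^{-1}$ and $-x(1-x^{-1})=1-x$, we get
\[
q\,\frac{(1-x)(1-x^{-1})}{(1-xq)(1-x^{-1}q)}\cdot\frac{1-xq^{-1}}{1-x}=\frac{1-x}{1-xq}.
\]
So there is no leftover monomial $-x^{\pm1}$. The only constant is one $q$ per same-color cross pair, and $q^{\binom{n_i+n_i'}{2}-\binom{n_i}{2}-\binom{n_i'}{2}}=q^{n_in_i'}$ supplies exactly that. Loops behave like the other arrows, since for a cross pair both ordered pairs are distinct.

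The spurious monomials matter, because the ``main obstacle'' you anticipate could not be handled as you propose. A leftover $\prod_{a\in\text{block 1},\,b\in\text{block 2}}(-z_{ia}/z_{ib})$ is neither color-symmetric nor cancels in pairs. It could not be moved through Sym, and the argument would not close. Once the identity is corrected, it holds on the nose summand by summand, and $\Xi(E*E')=\Xi(E)\,\bar{*}\,\Xi(E')$ follows at once.
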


\medskip 

\subsection{The localized double}
\label{sub:first double}

We define the (extended) double shuffle algebra as
\begin{equation}
\label{eqn:double loc}
\CS_{\text{loc}} = \CS^+_{\text{loc}} \otimes \field [\kappa^\pm_{i}, p_{i,\pm k}, c_{i, \pm k} ]_{i \in I, k > 0} \otimes \CS^-_{\text{loc}}
\end{equation}
The commuting elements $\kappa, p, c$ will be called Cartan elements. The multiplication between the three factors of \eqref{eqn:double loc} is controlled by relations \eqref{eqn:rel double 0}-\eqref{eqn:rel double 7} below 
\begin{equation}
\label{eqn:rel double 0}
c_{i, \pm k} \text{ are central}
\end{equation}
\begin{align}
&\kappa_i^+ E = E \kappa_i^+ \cdot q^{-\langle \bs^i, \bn\rangle} \label{eqn:rel double 1} \\
&\kappa_i^+ F = F \kappa_i^+ \cdot q^{\langle \bs^i, \bn\rangle}\label{eqn:rel double 2} \\
&\kappa_i^- E = E \kappa_i^- \cdot q^{\langle \bn, \bs^i\rangle} \label{eqn:rel double 3} \\
&\kappa_i^- F = F \kappa_i^- \cdot q^{\langle - \bn, \bs^i \rangle} \label{eqn:rel double 4}
\end{align} 
\begin{align}
&[p_{i,k}, E] = (1-q^k) (z_{i1}^k+\dots+z_{in_i}^k) E \label{eqn:rel double 5} \\
&[p_{i,k}, F] = (1-q^{-k}) (z_{i1}^k+\dots+z_{in_i}^k)F \label{eqn:rel double 6} 
\end{align}
for all $E(z_{i1},\dots,z_{in_i})_{i \in I} \in \CS^+_{\text{loc}}$, $F(z_{i1},\dots,z_{in_i})_{i \in I} \in \CS^-_{\text{loc}}$, $i \in I$ and $k \in \BZ \backslash 0$. Finally, we impose for all $i,j \in I$ and $k,\ell \in \BZ$ the relation
\begin{equation}
\label{eqn:rel double 7}
\Big[e_{i,k}, f_{j,\ell} \Big] = \frac {\delta_{ij}}{\gamma_i} \begin{cases} - \ph^+_{i,k+\ell} &\text{if }k+\ell > 0 \\ \ph^-_{i,0} - \ph^+_{i,0} &\text{if }k+\ell = 0 \\ \ph^-_{i,-k-\ell} &\text{if }k+\ell < 0 \end{cases}
\end{equation} 
where 
\begin{equation}
\label{eqn:gamma}
\gamma_i = \left(1-\frac 1q\right)\prod_{\alpha : i \rightarrow i} \left[ (1-t_\alpha)\left(1-\frac {t_{\alpha}}q\right) \right]
\end{equation}
and 
\begin{equation}
\label{eqn:def ph}
\ph^\pm_i(x) = \sum_{k=0}^{\infty} \frac {\ph^\pm_{i,k}}{x^{\pm k}} := 
\end{equation}
$$
:= \kappa_i^\pm \exp \left(\sum_{k=1}^{\infty} \frac {- c_{i,\pm k} + p_{i,\pm k}(1 + q^{\mp k})-  \sum_{\alpha : i \rightarrow j} p_{j,\pm k} t_\alpha^{\mp k} - \sum_{\alpha: j \rightarrow i} p_{j,\pm k} t_{\alpha}^{\pm k} q^{\mp k}}{kx^{\pm k}}\right)
$$
Formula \eqref{eqn:rel double 7} suffices to determine the commutator of arbitrary elements of $\CS^\pm_{\text{loc}}$, due to the generation result of Theorem \ref{thm:shuffle loc}. We note that $\CS_{\text{loc}}$ is a $\zz \times \BZ$ graded algebra, by extending the horizontal and vertical degrees of \eqref{eqn:deg 1}-\eqref{eqn:deg 2} by
\begin{align*} 
&\hdeg \kappa^\pm_{i} = \b0, \qquad \vdeg \kappa^\pm_{i} = 0 \\
&\hdeg p_{i,k} = \b0, \qquad \vdeg p_{i,k}=k \\
&\hdeg c_{i,k} = \b0, \qquad \vdeg c_{i,k}=k
\end{align*}
There is a natural shifted generalization of the double shuffle algebra, defined as follows: given $I$-tuples of integers $\ba = (a_i)_{i \in I}, \bb = (b_i)_{i \in I} \in \zz$, we define
\begin{equation}
\label{eqn:double shifted loc}
\CS^{\ba-\bb}_{\text{loc}} = \CS^+_{\text{loc}} \otimes \field [\kappa^\pm_{i}, p_{i,\pm k}, c_{i, \pm k} ]_{i \in I, k > 0}  \otimes \CS^-_{\text{loc}}
\end{equation}
given by imposing \eqref{eqn:rel double 0}-\eqref{eqn:rel double 6} and the following modification of \eqref{eqn:rel double 7}
\begin{equation}
\label{eqn:rel double 7 shifted}
\Big[e_{i,k}, f_{j,\ell} \Big] = \frac {\delta_{ij}}{\gamma_i} \Big( \delta_{k+\ell \leq b_i} \ph^-_{i,-k-\ell+b_i}  - \delta_{k+\ell \geq a_i} \ph^+_{i,k+\ell-a_i}\Big)  
\end{equation} 
The reason for the superscript $\ba - \bb$ in \eqref{eqn:double shifted loc} is that, up to isomorphism, the shifted double shuffle algebra only depends on the value of $\ba-\bb \in \zz$.

\medskip

\subsection{The coproduct} 
\label{sub:coproduct}

Let us recall the origin of relations \eqref{eqn:rel double 0}-\eqref{eqn:rel double 7} (and of relation \eqref{eqn:rel double 7 shifted} in the shifted case). We may endow
\begin{align*} 
&\CS^{\geq}_{\text{loc}} = \CS^+_{\text{loc}} \otimes \field [\kappa^+_{i}, p_{i, k}, c_{i,k} ]_{i \in I, k > 0}  \\ 
&\CS^{\leq}_{\text{loc}} = \field [\kappa^-_{i}, p_{i,- k}, c_{i,-k} ]_{i \in I, k > 0}  \otimes \CS^-_{\text{loc}} 
\end{align*}
with algebra structures by imposing those among relations \eqref{eqn:rel double 0}-\eqref{eqn:rel double 6} which pertain to the respective generators. The resulting algebras may be promoted to topological bialgebras via
\begin{equation}
\label{eqn:coproduct ph}
\Delta \left(\kappa^\pm_i \right) = \kappa^\pm_i \otimes \kappa^\pm_i, \ \Delta(p\text{ or }c)_{i,\pm k} = (p\text{ or }c)_{i,\pm k}\otimes 1 + 1 \otimes (p \text{ or }c)_{i,\pm k} 
\end{equation}
\begin{align}
\Delta(E) = \sum_{\b0 \leq \bm \leq \bn} \frac {\prod^{j \in I}_{m_j < b \leq n_j} \ph^+_j(z_{jb}) E(z_{i1},\dots , z_{im_i} \otimes z_{i,m_i+1}, \dots, z_{in_i})}{\prod^{i \in I}_{1\leq a \leq m_i} \prod^{j \in I}_{m_j < b \leq n_j} \zeta_{ji} \left( \frac {z_{jb}}{z_{ia}} \right)} \label{eqn:coproduct e} \\
\Delta(F) = \sum_{\b0 \leq \bm \leq \bn} \frac {F(z_{i1},\dots , z_{im_i} \otimes z_{i,m_i+1}, \dots, z_{in_i}) \prod^{i \in I}_{1 \leq a \leq m_i} \ph^-_i(z_{ia})}{\prod^{i \in I}_{1\leq a \leq m_i} \prod^{j \in I}_{m_j < b \leq n_j} \zeta_{ij} \left( \frac {z_{ia}}{z_{jb}} \right)} \label{eqn:coproduct f} 
\end{align}
for all $E(z_{i1},\dots,z_{in_i})_{i \in I} \in \CS^+_{\text{loc}}$ and $F(z_{i1},\dots,z_{in_i})_{i \in I} \in \CS^-_{\text{loc}}$. To make sense of the right-hand sides of formulas \eqref{eqn:coproduct e} and \eqref{eqn:coproduct f}, we expand the denominator as a power series in the range $|z_{ia}| \ll |z_{jb}|$, and place all the powers of $z_{ia}$ to the left of the $\otimes$ sign and all the powers of $z_{jb}$ to the right of the $\otimes$ sign (for all $i,j \in I$, $1 \leq a \leq m_i$, $m_j < b \leq n_j$). In particular, the elements \eqref{eqn:special} and \eqref{eqn:special transpose} satisfy
\begin{align}
&\Delta(e_{\bn,g}) = \sum_{\b0 \leq \bm \leq \bn} \ph^+ e_{\bm,g_+'} \otimes e_{\bn-\bm,g_+''} \label{eqn:coproduct special e} \\
&\Delta(f_{\bn,g}) = \sum_{\b0 \leq \bm \leq \bn} f_{\bm,g_-'} \otimes f_{\bn-\bm,g_-''}\ph^- \label{eqn:coproduct special f}
\end{align}
where $g_\pm' \otimes g_\pm''$ (respectively $\ph^\pm$) denote infinite tensors of Laurent polynomials (respectively products of Cartan elements) that one can calculate starting from $g$ by performing the power series expansions in \eqref{eqn:coproduct e}-\eqref{eqn:coproduct f}.

\medskip 

\subsection{The Drinfeld double construction} 
\label{sub:drinfeld double}

There exists a pairing
\begin{equation}
\label{eqn:pairing loc}
\CS^{\geq}_{\text{loc}} \otimes \CS^{\leq}_{\text{loc}} \xrightarrow{\langle \cdot , \cdot \rangle} \field 
\end{equation}
which satisfies the usual bialgebra properties for all $e,e',e'' \in \CS^{\geq}_{\text{loc}}$, $f,f',f'' \in \CS^{\leq}_{\text{loc}}$
\begin{align*} 
&\langle e, f'f'' \rangle = \langle e_1,f'\rangle \langle e_2,f''\rangle \\
&\langle e'e'', f \rangle = \langle e',f_2\rangle \langle e'',f_1\rangle 
\end{align*}
(we use Sweedler notation $\Delta(e) = e_1 \otimes e_2$ and $\Delta(f) = f_1\otimes f_2$ throughout the present paper) and is completely determined by the conditions 
\begin{equation} 
\label{eqn:pairing kappa}
\Big \langle \kappa_i^+, \kappa_j^- \Big \rangle = q^{-\langle \bs^i,\bs^j \rangle}
\end{equation} 
\begin{equation} 
\label{eqn:pairing p}
\Big \langle p_{i,k}, p_{j,-k} \Big \rangle = \delta_{ij}(q^{-k}-q^k) + \sum_{\alpha : i \rightarrow j} t_{\alpha}^{-k}(q^k-1) + \sum_{\alpha : j \rightarrow i} t_\alpha^k(1-q^{-k})
\end{equation} 
\begin{equation} 
\label{eqn:pairing ef}
\Big \langle E,F \Big \rangle = \prod_{i\in I} \gamma_i^{-n_i} \cdot \int  \frac {E(z_{i1},\dots,z_{in_i})F(z_{i1},\dots,z_{in_i})}{\prod_{(i,a) \neq (j,b)} \zeta_{ij}\left(\frac {z_{ia}}{z_{jb}} \right)} \prod_{i \in I} \prod_{a=1}^{n_i} \frac {dz_{ia}}{2\pi i z_{ia}}
\end{equation} 
for all $E(z_{i1},\dots,z_{in_i})_{i \in I} \in \CS^+_{\text{loc}}$ and $F(z_{i1},\dots,z_{in_i})_{i \in I} \in \CS^-_{\text{loc}}$. All pairings between $\kappa,p,c,E,F$ other than the ones above vanish; in particular, we have $\langle E, F \rangle = 0$ unless $\hdeg E + \hdeg F = \b0$. The definition of the integral in \eqref{eqn:pairing ef} can be found in \cite{Wheel}, and will not be used in the present paper. With this in mind, the algebra \eqref{eqn:double loc} is none other than the Drinfeld double
$$
\CS_{\text{loc}} = \CS^{\geq}_{\text{loc}} \otimes \CS^{\leq}_{\text{loc}}
$$
defined with respect to the coproduct and pairing above, i.e. we have
\begin{equation}
\label{eqn:dd} 
\langle e_2,f_2\rangle e_1 f_1 = \langle e_1,f_1 \rangle f_2e_2
\end{equation}
for all $e \in \CS^{\geq}_{\text{loc}}$ and $f \in \CS^{\leq}_{\text{loc}}$. Similarly, the shifted algebra \eqref{eqn:double shifted loc} is
$$
\CS^{\ba-\bb}_{\text{loc}} = \CS^{\geq}_{\text{loc}} \otimes \CS^{\leq}_{\text{loc}}
$$
where we modify the Drinfeld double relation \eqref{eqn:dd} as follows
\begin{equation}
\label{eqn:dd shifted} 
\langle e_2,T^{\ba}(f_2)\rangle e_1 f_1 = \langle T^{-\bb}(e_1),f_1 \rangle f_2e_2 
\end{equation}
for all $e \in \CS^{\geq}_{\text{loc}}$ and $f \in \CS^{\leq}_{\text{loc}}$, with respect to the shift automorphisms
\begin{align*} 
&T^{\br} : \CS^{\geq}_{\text{loc}} \mapsto \CS^{\geq}_{\text{loc}}, \qquad T^{\br}(E) = E \prod_{i \in I} \prod_a z_{ia}^{r_i}, \quad \ T^{\br}(\kappa \text{ or }p \text{ or }c) = (\kappa \text{ or }p \text{ or }c) \\
&T^{\br} : \CS^{\leq}_{\text{loc}} \mapsto \CS^{\leq}_{\text{loc}}, \qquad T^{\br}(F) = F \prod_{i \in I} \prod_a z_{ia}^{-r_i}, \quad T^{\br}(\kappa \text{ or }p \text{ or }c) = (\kappa \text{ or }p \text{ or }c)
\end{align*} 
for all $\br \in \zz$.

\medskip 

\subsection{The integral double}
\label{sub:integral double}

We define $\CS \subset \CS_{\text{loc}}$ and more generally
\begin{equation}
\label{eqn:integral version}
\CS^{\ba-\bb} \subset \CS^{\ba-\bb}_{\text{loc}}
\end{equation}
as the $\ring$-subalgebra generated by

\medskip 

\begin{itemize}[leftmargin=*]

\item the integral shuffle algebras $\CS^\pm \subset \CS^\pm_{\text{loc}}$, and 

\medskip 

\item the subalgebra $\ring [p_{i,\pm k}]_{i \in I, k > 0}$ of Cartan elements.

\end{itemize}

\medskip 

\noindent We do not necessarily expect a triangular decomposition $\CS \neq \CS^+ \otimes (\text{Cartan}) \otimes \CS^-$, for the following reason. The Drinfeld double relations \eqref{eqn:dd} can be rewritten as
\begin{equation}
\label{eqn:dd equivalent}
fe = \langle e_1,S(f_1)\rangle e_2f_2 \langle e_3,f_3 \rangle
\end{equation} 
where we write $\Delta^{(2)}(e) = e_1 \otimes e_2 \otimes e_3$, $\Delta^{(2)}(f) = f_1 \otimes f_2 \otimes f_3$ \footnote{In the shifted case \eqref{eqn:dd shifted}, one needs to replace $f_3$ by $T^{\ba}(f_3)$ and $e_1$ by $T^{-\bb}(e_1)$.}. In particular, for the generators \eqref{eqn:special} and \eqref{eqn:special transpose} of the integral shuffle algebras, we have for all $\bm,\bn,g,h$
\begin{equation}
\label{eqn:commutator}
f_{\bm,h} e_{\bn,g} = \mathop{\sum_{\bm=\bm_1+\bm_2+\bm_3}}_{\bn = \bn_1+\bn_2+\bn_3} \langle \ph^+ e_{\bn_1,g_1}, S(f_{\bm_1,h_1}) \rangle \ph^+ e_{\bn_2,g_2}  f_{\bm_2,h_2} \ph^- \langle e_{\bn_3,g_3},f_{\bm_3,h_3} \ph^- \rangle
\end{equation}
where $g_1 \otimes g_2 \otimes g_3$ and $h_1 \otimes h_2 \otimes h_3$ denote certain infinite sums of tensors of Laurent polynomials that can be calculated from $g$ and $h$ using formulas \eqref{eqn:coproduct special e}-\eqref{eqn:coproduct special f} (while the sums are infinite are infinite, only finitely many of the airings above will be non-zero, resulting of the right-hand side of the expression \eqref{eqn:commutator} being a finite sum). While the aforementioned tensors and the pairings in the right-hand side of \eqref{eqn:commutator} may be calculated in practice, the result may have coefficients in $\field$ and not in $\ring$.

\bigskip 

\section{The Coulomb branch}
\label{sec:coulomb}

\medskip

\subsection{The BFN construction and difference operators}
\label{sub:bfn}

Consider a quiver $Q$ as in Subsection \ref{sub:quivers}, with equivariant parameters as in \eqref{eqn:parameters}-\eqref{eqn:opposite parameters}. Given any dimension vector $\bd = (d_i \geq 0)_{i \in I} \in \nn$ and framing parameters
\begin{equation}
\label{eqn:framing}
\underbrace{\sigma_{i1},\dots,\sigma_{ik_i}}_{\text{corresponding to arrows }i \rightarrow \square} \quad \text{and} \quad \underbrace{\tau_{i1},\dots,\tau_{i\ell_i}}_{\text{corresponding to arrows } \square \rightarrow i}
\end{equation}
Let $\bk = (k_i)_{i \in I}$ and $\bell = (\ell_i)_{i \in I}$. Our $q$, $t_\alpha$, $\sigma_{is}, \tau_{it}$ are the $q^2$, $qu(\alpha)$, $z_{\square}q^{\pm 1}$ of \cite{SS}. One of our main objects of study is the $K$-theoretic \emph{Coulomb branch algebra} 
$$
\CA_{\bd|\bk,\bell} \text{ over the ring } R[w_{i1}^{\pm 1},\dots,w_{id_i}^{\pm 1}]^{\sym}_{i \in I}
$$
defined by Braverman, Finkelberg and Nakajima in \cite{BFNQuiver}, and studied in numerous other works. We will not need to recall the definition of $\CA_{\bd|\bk,\bell}$, and refer the reader to Subsection 2.1 of \cite{SS} for an overview in our conventions. The main thing we will use about the Coulomb branch algebra in the present paper is that there exists an injective $\ring$-algebra homomorphism 
\begin{equation}
\label{eqn:coulomb hom}
\CA_{\bd|\bk,\bell} \hookrightarrow \DD_{\bd|\bk,\bell}
\end{equation}
constructed using the localization procedure in \cite{BFN, BFNQuiver, KWWY}, where 
\begin{equation}
\label{eqn:difference operators}
\DD_{\bd|\bk,\bell} = \frac {\field(w_{ia})_{i \in I, a \in \{1,\dots,d_i\}} \otimes \field[D^{\pm 1}_{ia}]_{i \in I, a \in \{1,\dots,d_i\}}}{D_{ia} w_{jb} - q^{\delta_{ij} \delta_{ab}} w_{jb}D_{ia}} [\sigma^{\pm 1}_{i1},\dots,\sigma^{\pm 1}_{ik_i}, \tau_{i1}^{\pm 1},\dots,\tau^{\pm 1}_{i\ell_i}]_{i \in I}
\end{equation}
is an explicit algebra of $q$-difference operators. 

\medskip

\subsection{The Finkelberg-Frassek-Tsymbaliuk homomorphism}
\label{sub:gklo}

Motivated by the work \cite{GKLO} on Yangians and difference operators, Finkelberg-Tsymbaliuk (\cite{FT}, type $A$), Frassek-Tsymbaliuk (\cite{FrT}, classical types) and Tsymbaliuk (\cite{Tsymbaliuk}, general type) constructed a homomorphism from the shifted double shuffle algebra to $\DD_{\bd|\bk,\bell}$. Specifically, Theorem 5.7 in \loccit provides algebra homomorphisms
\begin{equation}
\label{eqn:tsymbaliuk}
\Ts : \CV^\pm_{\text{loc}} \rightarrow \DD_{\bd|\bk,\bell}
\end{equation}
defined for any $E(z_{i1},\dots,z_{in_i})_{i \in I} \in \CV^+_{\text{loc}}$ and $F(z_{i1},\dots,z_{in_i})_{i \in I} \in \CV^-_{\text{loc}}$ by
\begin{equation}
\label{eqn:action e}
E \mapsto (1-q^{-1})^{-|\bn|} \sum^{I-\text{compositions }P}_{\{n_i = n_i^{(1)}+ \dots + n_i^{(d_i)}\}_{i \in I}} \text{Res}^+_P(\Xi(E))
\end{equation} 
$$
\prod^{(i,a)}_{0 \leq c < n_i^{(a)}} 
\left[ \prod^{\alpha : i \rightarrow j}_{(j,b) \neq (i,a)} \left(1 - \frac {w_{ia}q^c t_\alpha}{w_{jb}}\right) \prod_{s=1}^{k_i} \left(1-\frac {w_{ia}q^c}{\sigma_{is}} \right) \prod_{b \neq a} \left(1 - \frac {w_{ib}}{w_{ia}q^c} \right)^{-1} \right] \prod_{(i,a)} D_{ia}^{n_i^{(a)}}
$$
\begin{equation}
\label{eqn:action f}
F \mapsto (1-q^{-1})^{-|\bn|} \sum^{I-\text{compositions }P}_{\{n_i = n_i^{(1)}+ \dots + n_i^{(d_i)}\}_{i \in I}} \text{Res}^-_P(\Xi(F))
\end{equation} 
$$
\prod^{(i,a)}_{0 \leq c < n_i^{(a)}} 
\left[ \prod^{\alpha : j \rightarrow i}_{(j,b) \neq (i,a)} \left(1 - \frac {w_{jb}t_\alpha}{w_{ia}q^c}\right) \prod_{t=1}^{\ell_i} \left(1-\frac {\tau_{it}}{w_{ia}q^{c}} \right) \prod_{b \neq a} \left(1 - \frac {w_{ia}q^c}{w_{ib}} \right)^{-1} \right] \prod_{(i,a)} D_{ia}^{-n_i^{(a)}}
$$
In the formulas above, we recall that $\Xi(E)$ and $\Xi(F)$ of \eqref{eqn:xi} are rational functions in $z_{ia}$ with at most simple poles at $z_{ia} = q z_{ib}$. For such a rational function $G(z_{i1},\dots,z_{in_i})_{i \in I}$ and any $I$-composition $P = \{n_i = n_i^{(1)}+ \dots + n_i^{(d_i)}\}_{i \in I}$, we define
\begin{equation}
\label{eqn:residue}
\text{Res}_P(G) = 
\end{equation}
$$
\left( \mathop{\text{Res}}_{z_{i,\nu_i^{(a)}} = q^{n_i^{(a)}-1}z_{i,\nu_i^{(a-1)}+1}} \cdots \mathop{\text{Res}}_{z_{i,\nu_i^{(a-1)}+2} = qz_{i,\nu_i^{(a-1)}+1}} \frac {G}{z_{i,\nu_i^{(a)}} \dots z_{i,\nu_i^{(a-1)}+2}}  \right)_{i \in I, a \leq d_i}
$$
where $\nu_i^{(a)} = n_i^{(1)}+\dots+n_i^{(a)}$ for all $a \in \{1,\dots,d_i\}$. Then in \eqref{eqn:action e}-\eqref{eqn:action f}, one sets
\begin{align*}
&\text{Res}^+_P(G) = \frac {\text{Res}_P(G)|_{z_{i,\nu_i^{(a-1)}+1 = w_{ia}}}}{\prod_{i \in I} \prod_{a=1}^{d_i} \left[(1-q^{-1})\dots(1-q^{-n_i^{(a)}+1})  \prod_{\alpha : i \rightarrow i} \frac {(1-t_\alpha)^{n_i^{(a)}-1}}{(1-t_{\alpha}q)\dots (1-t_\alpha q^{n_i^{(a)}-1})}\right]} \\
&\text{Res}^-_P(G) = \frac {\text{Res}_P(G)|_{z_{i,\nu_i^{(a-1)}+1 = w_{ia} q^{-n_i^{(a)}}}}}{\prod_{i \in I} \prod_{a=1}^{d_i} \left[(1-q^{-1})\dots(1-q^{-n_i^{(a)}+1})  \prod_{\alpha : i \rightarrow i} \frac {(1-t_\alpha)^{n_i^{(a)}-1}}{(1-t_{\alpha}q)\dots (1-t_\alpha q^{n_i^{(a)}-1})}\right]}
\end{align*}
In the following result, we construct $I$-tuples $\ba = (a_i)_{i \in I}$ and $\bb = (b_i)_{i \in I}$ by
\begin{align}
&a_i = d_i - \sum_{j \in I} d_j \Big|\text{arrows }i \rightarrow j\Big| - k_i \label{eqn:a} \\
&b_i = -d_i + \sum_{j \in I} d_j \Big|\text{arrows }j \rightarrow i\Big| + \ell_i \label{eqn:b}
\end{align}
We denote by $\CS^{\bd^\vee-\bk-\bell}_{\text{loc}}$ the shifted double shuffle algebra of \eqref{eqn:double shifted loc} for the $I$-tuples $\ba,\bb$ defined above, where $\bd^\vee$ is defined in \eqref{eqn:vee} and $\bk,\bell$ are determined by \eqref{eqn:framing}.

\medskip 

\begin{theorem}
\label{thm:tsymbaliuk}

(\cite{FT, FrT, Tsymbaliuk}) Formulas \eqref{eqn:action e}-\eqref{eqn:action f} yield algebra homomorphisms \eqref{eqn:tsymbaliuk}. The restriction of these homomorphisms to the subalgebras $\CS^\pm_{\emph{loc}} \subset \CV^\pm_{\emph{loc}}$, together with the assignments
\begin{align} 
&\kappa^+_i \mapsto \frac {\prod_{\alpha : i\rightarrow j} \prod_{b=1}^{d_j} \left(-\frac {t_{\alpha}}{w_{jb}} \right) \prod_{s=1}^k \left(-\frac 1{\sigma_{is}}\right)}{\prod_{b=1}^{d_i} \left(-\frac {q}{w_{ib}} \right)} \label{eqn:assignment 1} \\ 
&\kappa^-_i \mapsto \frac {\prod_{\alpha : j\rightarrow i} \prod_{b=1}^{d_j} \left(-\frac {w_{jb}t_{\alpha}}q \right) \prod_{t=1}^\ell \left(- \tau_{it}\right)}{\prod_{b=1}^{d_i} \left(- w_{ib} \right)} \label{eqn:assignment 2}
\end{align}
and
\begin{equation}
\label{eqn:assignment 3}
p_{i,k} \mapsto w_{i1}^k+\dots + w_{id_i}^k
\end{equation}
\begin{equation}
\label{eqn:assignment 4}
c_{i,k} \mapsto \sum_{s=1}^{k_i} \sigma_{is}^k + \sum_{t=1}^{\ell_i} \tau_{it}^k 
\end{equation}
glue to a morphism 
\begin{equation}
\label{eqn:tsymbaliuk theorem}
\eTs : \CS^{\bd^\vee-\bk-\bell}_{\emph{loc}} \rightarrow \DD_{\bd|\bk,\bell} 
\end{equation}
that we dub the \emph{Finkelberg-Frassek-Tsymbaliuk homomorphism}. 

\end{theorem}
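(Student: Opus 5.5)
The plan is to check the defining relations of $\CS^{\bd^\vee-\bk-\bell}_{\text{loc}}$ one family at a time. We take as given that \eqref{eqn:action e} and \eqref{eqn:action f} define algebra homomorphisms on $\CV^\pm_{\text{loc}}$ (the first half of the statement, from \cite{Tsymbaliuk}). It then remains to show that the images of $\CS^\pm_{\text{loc}}$ and the Cartan assignments \eqref{eqn:assignment 1}--\eqref{eqn:assignment 4} satisfy relations \eqref{eqn:rel double 0}--\eqref{eqn:rel double 6} and the shifted commutation relation \eqref{eqn:rel double 7 shifted}, with $\ba,\bb$ as in \eqref{eqn:a}--\eqref{eqn:b}. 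By Theorem \ref{thm:shuffle loc}, $\CS^\pm_{\text{loc}}$ is generated by $e_{i,k}$, respectively $f_{i,k}$. So the cross relations only need to be checked on these generators: \eqref{eqn:rel double 1}--\eqref{eqn:rel double 6} are then inherited by products, and \eqref{eqn:rel double 7 shifted} is stated on generators.

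First we compute $\Ts(e_{i,k})$ and $\Ts(f_{i,k})$. For $\bn=\bs^i$ only compositions with a single part $n_i^{(a)}=1$ contribute, so no residues are taken. Using $\Xi(z_{i1}^k)=z_{i1}^k$, this gives
\[
\Ts(e_{i,k})=(1-q^{-1})^{-1}\sum_{a=1}^{d_i} w_{ia}^k\, \frac{\prod_{\alpha:i\to j}\prod_{(j,b)\neq(i,a)}(1-w_{ia}t_\alpha/w_{jb})\prod_s(1-w_{ia}/\sigma_{is})}{\prod_{b\neq a}(1-w_{ib}/w_{ia})}\,D_{ia},
\]
and an analogous formula holds for $f_{i,k}$ with $D_{ia}^{-1}$. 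These are GKLO-type operators.

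The Cartan relations are then direct checks. Relation \eqref{eqn:rel double 0} holds because \eqref{eqn:assignment 4} involves only framing parameters. Relations \eqref{eqn:rel double 5}--\eqref{eqn:rel double 6} follow from $D_{ia}w_{ia}^k=q^kw_{ia}^kD_{ia}$: we get $[p_{i,k},\Ts(E)]=(1-q^k)(\sum_a (\text{shift of } w))\Ts(E)$, and on each residue summand the shifts of $w_{ia}$ by powers of $q$ reproduce $z_{i1}^k+\dots+z_{in_i}^k$ evaluated at the specialization $z=w_{ia}q^c$. Relations \eqref{eqn:rel double 1}--\eqref{eqn:rel double 4} follow the same way, by counting homogeneity degrees of the monomials in \eqref{eqn:assignment 1}--\eqref{eqn:assignment 2} under $D_{ia}$. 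The second rule of \eqref{eqn:difference operators} requires some care with the $\delta_{ab}$ factor, but it is bookkeeping.

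The main obstacle is \eqref{eqn:rel double 7 shifted}. For $i\neq j$ the operators $\Ts(e_{i,k})$ and $\Ts(f_{j,\ell})$ commute, because their coefficients involve shifts of disjoint variables; this needs a short check of the factors $(1-w_{ia}t_\alpha/w_{jb})$, which get shifted on both sides and cancel in the commutator. For $i=j$, the terms with $a\neq b$ in $[\Ts(e_{i,k}),\Ts(f_{i,\ell})]$ cancel by a symmetric rational identity. The diagonal terms $a=b$ give a function $\sum_a w_{ia}^{k+\ell}\,(A_a(w)-A_a(wq^{\pm}))$, where $A_a$ are explicit rational functions. We rewrite this sum as a sum of residues of the single rational function $x^{k+\ell}\Phi(x)$, where
\[
\Phi(x)=\frac{\prod_{\alpha,b}(1-xt_\alpha/w_{jb})\prod_s(1-x/\sigma_{is})\prod_{\alpha:j\to i,b}(1-w_{jb}t_\alpha/(xq))\prod_t(1-\tau_{it}/(xq))}{\prod_b(1-w_{ib}/x)(1-xq/w_{ib})}\,\frac{1}{x},
\]
taken over the poles $x=w_{ia}$ and $x=w_{ia}q^{-1}$. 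By the residue theorem this sum equals minus the residues at $0$ and $\infty$. Expanding $\Phi$ at $x=\infty$ and $x=0$ and comparing with \eqref{eqn:def ph}, using \eqref{eqn:assignment 1}--\eqref{eqn:assignment 4}, we will match these with $\ph^\pm_i$. The leading orders at $\infty$ and $0$ are $x^{a_i}$ and $x^{b_i}$, which explains the cutoffs $\delta_{k+\ell\ge a_i}$ and $\delta_{k+\ell\le b_i}$, and the constants produce $\gamma_i$. This bookkeeping of normalizations, in particular the loop factors in $\gamma_i$, is where I expect the real difficulty. It follows the argument of \cite[Thm.~5.7]{Tsymbaliuk}.
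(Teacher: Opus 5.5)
Your outline is correct and is the argument the paper relies on. The paper gives no proof of its own beyond citing \cite{FT, FrT, Tsymbaliuk}, and your scheme is precisely the GKLO-type proof of \cite[Thm.~5.7]{Tsymbaliuk}: you quote the homomorphism property on $\CV^\pm_{\text{loc}}$ from \cite{Tsymbaliuk}; you reduce all cross relations to $e_{i,k}, f_{j,\ell}$ via Theorem \ref{thm:shuffle loc}, which tacitly requires \eqref{eqn:assumption hard}; you cancel the off-diagonal terms; and you use the residue theorem to move the poles of $x^{k+\ell}\Phi(x)$ at $x=w_{ia}, w_{ia}q^{-1}$ to $x=0,\infty$.

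One caution: the constants as transcribed in this paper do not all match verbatim, so in the final matching step you should expect to fix normalizations rather than verify them literally. For instance, \eqref{eqn:rel double 6} must read $q^k-1$ instead of $1-q^{-k}$, as you see by applying $[p_{i,k},\cdot]$ to \eqref{eqn:rel double 7 shifted}. Your rational function $\Phi(x)$, correctly derived from \eqref{eqn:action f}, has $\tau_{it}/(xq)$ where the generating function displayed after Theorem \ref{thm:tsymbaliuk} has $\tau_{it}/x$. Finally, the diagonal terms come out as $q$ times the image of the right-hand side of \eqref{eqn:rel double 7 shifted}.
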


\medskip 

\begin{remark} The aforementioned connection between double shuffle algebras and difference operators, which we attribute to Finkelberg-Frassek-Tsymbaliuk, is also compatible with a very important connection between Yangians/quantum loop groups and Coulomb branch algebras (\cite{BFNQuiver, BDG, KWWY}).
\end{remark}

\medskip 

\noindent The assignments for the Cartan elements in Theorem \ref{thm:tsymbaliuk} imply that the generating series $x^{-a_i}\ph^+_i(x)$ and $x^{-b_i}\ph^-_i(x)$ of \eqref{eqn:def ph} both correspond to 
$$
\frac {\prod^{(j,b)}_{\alpha : i \rightarrow j} \left(1 - \frac {xt_{\alpha}}{w_{jb}}\right) \prod^{(j,b)}_{\alpha : j \rightarrow i} \left(1 - \frac {t_{\alpha}w_{jb}}{xq} \right) \prod_{s=1}^{k_i} \left(1 - \frac x{\sigma_{is}} \right) \prod_{t=1}^{\ell_i} \left(1 - \frac {\tau_{it}}x \right)}{\prod_{i,a} \left(1 - \frac {w_{ia}}x\right)\left(1 - \frac {xq}{w_{ia}}\right)} \in \DD_{\bd|\bk,\bell}(x)
$$
but expanded near $x \sim \infty$ and $x \sim 0$, respectively.

\medskip 

\subsection{Coulomb branches}
\label{sub:iso}

The following Proposition will be key, and it is a natural analogue of Lemma 2.12 of \cite{Tsymbaliuk}.

\medskip 

\begin{proposition}
\label{prop:key}

For the shuffle elements $e_{\bn,g}, f_{\bn,g}$ of \eqref{eqn:special} and \eqref{eqn:special transpose}, we have
$$
\eTs(e_{\bn,g}) = \begin{cases} E_{\bn,g} &\text{if } \bn \leq \bd \\ 0&\text{otherwise} \end{cases}, \qquad \eTs(f_{\bn,g}) = \begin{cases} F_{\bn,g} &\text{if } \bn \leq \bd \\ 0&\text{otherwise} \end{cases},
$$
where for any $\b0 \leq \bn \leq \bd$, we set
\begin{equation}
\label{eqn:special e}
E_{\bn,g} = q^{-\sum_{i \in I} {n_i \choose 2}} \sum_{\{A_i \subset \{1,\dots,d_i\}, |A_i| = n_i\}_{i \in I}} g(w_{ia})_{i \in I, a \in A_i}
\end{equation} 
$$
\prod_{i,j \in I} \prod^{\alpha : i \rightarrow j}_{a \in A_i, b \notin A_j} \left(1 - \frac {w_{ia} t_\alpha}{w_{jb}}\right) \prod_{i \in I} \prod_{a \in A_i} \prod_{s=1}^{k_i} \left(1-\frac {w_{ia}}{\sigma_{is}} \right) \prod_{i \in I} \prod_{a \in A_i, b \notin A_i} \left(1 - \frac {w_{ib}}{w_{ia}} \right)^{-1} \prod_{i \in I}  \prod_{a \in A_i} D_{ia}
$$
\begin{equation}
\label{eqn:special f}
F_{\bn,g} = q^{-\sum_{i \in I} {n_i \choose 2}} \sum_{\{A_i \subset \{1,\dots,d_i\}, |A_i| = n_i\}_{i \in I}} g(w_{ia}q^{-1})_{i \in I, a \in A_i}
\end{equation} 
$$
\prod_{i,j \in I} \prod^{\alpha : j \rightarrow i}_{a \in A_i, b \notin A_j} \left(1 - \frac {w_{jb}t_\alpha}{w_{ia}}\right) \prod_{i \in I} \prod_{a \in A_i} \prod_{t=1}^{\ell_i} \left(1-\frac {\tau_{it}}{w_{ia}} \right) \prod_{i \in I} \prod_{a \in A_i, b \notin A_i} \left(1 - \frac {w_{ia}}{w_{ib}} \right)^{-1}  \prod_{i \in I} \prod_{a \in A_i} D_{ia}^{-1}
$$

\end{proposition}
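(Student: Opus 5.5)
We compute $\eTs(e_{\bn,g})$ directly from formula \eqref{eqn:action e}; the case of $f_{\bn,g}$ is symmetric, using \eqref{eqn:action f}. The plan is to show that only $I$-compositions $P$ with all parts $n_i^{(a)}\in\{0,1\}$ contribute, and that each surviving term is exactly one summand of \eqref{eqn:special e}.

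\textbf{Vanishing of the non-minimal terms.} First apply $\Xi$ of \eqref{eqn:xi} to $e_{\bn,g}$. The factor $\prod_{i}\prod_{a\ne b}(1-\frac{z_{ia}}{z_{ib}q})$ in \eqref{eqn:special} contains $(1-\frac{z_{ia}}{z_{ib}q})$ for both orders of $a\neq b$. One of these cancels the pole $(1-\frac{z_{ia}q}{z_{ib}})^{-1}$ in \eqref{eqn:division}, up to a monomial. Hence $\Xi(e_{\bn,g})$ has no poles at $z_{ib}=qz_{ia}$. Moreover, it still carries a zero there, coming from the other factor: for instance $(1-\frac{z_{ia}q}{z_{ib}})$ becomes $(1-\frac{z_{ib}}{z_{ia}q})$ after relabeling. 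In other words, $\Xi(e_{\bn,g})$ is regular along every divisor $z_{i,c+1}=qz_{ic}$.

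Consequently the iterated residue $\mathrm{Res}_P$ of \eqref{eqn:residue} vanishes as soon as some part satisfies $n_i^{(a)}\ge 2$. Only compositions with all parts in $\{0,1\}$ survive. Such a composition is exactly the choice of subsets $A_i=\{a: n_i^{(a)}=1\}\subset\{1,\dots,d_i\}$ with $|A_i|=n_i$. This requires $n_i\le d_i$ for all $i$, and therefore $\eTs(e_{\bn,g})=0$ unless $\bn\le\bd$.

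\textbf{Evaluating the surviving terms.} For such $P$ there are no residues to take. The normalization in $\mathrm{Res}^+_P$ is trivial, since all the products there are empty. One simply substitutes $z_{ia}\mapsto w_{ia}$ for $a\in A_i$, after reindexing. Then:

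\begin{itemize}[leftmargin=*]
\item the factor $\prod(1-z_{ia}/z_{ib})$ in \eqref{eqn:division} multiplies the remaining half of $\prod(1-\frac{z_{ia}}{z_{ib}q})$;
\item the factors $(1-q^{-1})$ from the diagonal $a=b$ terms of \eqref{eqn:special} cancel against $(1-q^{-1})^{-|\bn|}$;
\item the denominator $\prod_{\alpha}(1-\frac{z_{ia}t_\alpha}{z_{jb}})$ with $a\in A_i$, $b\in A_j$ cancels the portion of $\prod_{\alpha:i\to j,(j,b)\ne(i,a)}(1-\frac{w_{ia}t_\alpha}{w_{jb}})$ from \eqref{eqn:action e} with $b\in A_j$, leaving the product over $b\notin A_j$;
\item the loops with $(i,a)=(j,b)$ are excluded on both sides.
\end{itemize}

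Similarly, $\prod_{b\neq a}(1-\frac{w_{ib}}{w_{ia}})^{-1}$ for $b\in A_i$ should cancel against the surviving $\prod_{a\ne b\in A_i}(1-\frac{w_{ia}}{w_{ib}})$ from $\Xi(e_{\bn,g})$, possibly after accounting for the monomials and the $q$-power. This leaves the product over $b\notin A_i$. What remains is precisely the summand of \eqref{eqn:special e} indexed by $\{A_i\}$, including the prefactor $q^{-\sum\binom{n_i}2}$ and $\prod_{a\in A_i}D_{ia}$.

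\textbf{Main obstacle.} The hard part is the bookkeeping of monomial prefactors and powers of $q$. Rewriting $(1-\frac{z_{ia}q}{z_{ib}})$ as $-\frac{z_{ia}q}{z_{ib}}(1-\frac{z_{ib}}{z_{ia}q})$, and similarly for $(1-\frac{z_{ib}}{z_{ia}})$, produces signs and monomials. These must cancel over unordered pairs $\{a,b\}$ and combine with the normalization $q^{-\sum\binom{n_i}{2}}$ in \eqref{eqn:division}. I would check this first for a single vertex with $n_i=2$, and then argue that it holds pairwise in general. The $F$ case is handled the same way, with the substitution $z\mapsto w q^{-1}$ producing the argument $g(w_{ia}q^{-1})$ in \eqref{eqn:special f}.
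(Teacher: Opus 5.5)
Your strategy is the paper's: cancel the poles of $\Xi(e_{\bn,g})$ along $z_{ib}=qz_{ia}$ so that every residue with a part $\geq 2$ vanishes, then match the terms indexed by compositions with parts in $\{0,1\}$ (i.e.\ subsets $A_i$) against \eqref{eqn:special e} summand by summand. However, your pole/zero count in the first step is wrong, and the evaluation step as you wrote it would not produce $E_{\bn,g}$. The denominator of \eqref{eqn:division} runs over \emph{ordered} pairs $a\neq b$. So for each unordered pair it has two poles, along $z_{ib}=qz_{ia}$ and along $z_{ia}=qz_{ib}$. Cancelling them uses up \emph{both} factors $1-\frac{z_{ia}}{z_{ib}q}$ and $1-\frac{z_{ib}}{z_{ia}q}$ of $e_{\bn,g}$:
$$
\Big(1-\frac{z_{ia}}{z_{ib}q}\Big)\Big(1-\frac{z_{ib}}{z_{ia}q}\Big)=q^{-2}\Big(1-\frac{z_{ia}q}{z_{ib}}\Big)\Big(1-\frac{z_{ib}q}{z_{ia}}\Big).
$$
Hence no zero survives along these divisors, so your claim ``Moreover, it still carries a zero there'' is false. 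There is also no ``remaining half'' in your first bullet. If there were, the final answer would contain extra factors $1-\frac{w_{ia}}{w_{ib}q}$ with $a,b\in A_i$. These appear nowhere in $E_{\bn,g}$, and since they are not monomials, the sign/monomial bookkeeping you defer to cannot absorb them.

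Fortunately the vanishing argument only needs regularity, and the correct count also removes what you call the main obstacle. Per color, the identity above gives $\prod_{a\neq b}(1-\frac{z_{ia}}{z_{ib}q})=q^{-n_i(n_i-1)}\prod_{a\neq b}(1-\frac{z_{ia}q}{z_{ib}})$, hence
$$
\Xi(e_{\bn,g})=g\cdot\prod_{i\in I}(1-q^{-1})^{n_i}q^{-\binom{n_i}{2}}\cdot\frac{\prod_{i\in I}\prod_{a\neq b}\big(1-\frac{z_{ia}}{z_{ib}}\big)}{\prod^{\alpha:i\to j}_{(i,a)\neq(j,b)}\big(1-\frac{z_{ia}t_\alpha}{z_{jb}}\big)}.
$$
This is regular (but generically nonzero) along $z_{ib}=qz_{ia}$, so $\mathrm{Res}_P$ vanishes whenever some part is $\geq 2$. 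For a $\{0,1\}$-composition, after $z\mapsto w$ everything matches:
\begin{itemize}
\item $(1-q^{-1})^{n_i}$ cancels the prefactor of \eqref{eqn:action e};
\item $q^{-\binom{n_i}{2}}$ is exactly the prefactor of \eqref{eqn:special e};
\item $\prod_{a\neq b\in A_i}(1-\frac{w_{ia}}{w_{ib}})$ cancels the $b\in A_i$ part of $\prod_{b\neq a}(1-\frac{w_{ib}}{w_{ia}})^{-1}$ on the nose (relabel $a\leftrightarrow b$; no signs or monomials arise);
\item the $t_\alpha$-denominators cancel the $b\in A_j$ part, as you say.
\end{itemize}
For $f_{\bn,g}$ the same holds, since everything except $g$ is invariant under the uniform rescaling $z\mapsto wq^{-1}$ of $\mathrm{Res}^-_P$.
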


\medskip 

\begin{proof} \emph{(sketch)} Because the shuffle elements $e_{\bn,g}$ and $f_{\bn,g}$ are multiples of $z_{ia}q-z_{ib}$ for all $a \neq b$, the residues \eqref{eqn:residue} of $\Xi(e_{\bn,g})$ and $\Xi(f_{\bn,g})$ vanish if any part of the $I$-composition $P$ is greater than 1. On the other hand, if all parts of $P$ are equal to 1, this means that we must necessarily have $\bn \leq \bd$ and then the right-hand sides of \eqref{eqn:action e}-\eqref{eqn:action f} are easily seen to match the right-hand sides of \eqref{eqn:special e}-\eqref{eqn:special f} on the nose.

\end{proof}

\medskip 

\noindent As shown in \cite{BFN, DK, FT, SS, Weekes}, the image of the homomorphism $\CA_{\bd|\bk,\bell} \hookrightarrow \DD_{\bd|\bk,\bell}$ of \eqref{eqn:coulomb hom} is generated over $\ring$ by 
$$
\Big\{ E_{\bn,g}, F_{\bn,g} \Big\}_{\b0 \leq \bn \leq \bd, g \in R[w_{i1}^{\pm 1},\dots,w_{id_i}^{\pm 1}]^{\sym}_{i \in I}}
$$
Because of Proposition \ref{prop:key}, we conclude that the Finkelberg-Frassek-Tsymbaliuk homomorphism descends to a surjective $\ring$-algebra homomorphism
\begin{equation}
\label{eqn:hom}
\CS^{\bd^\vee - \bk - \bell} \twoheadrightarrow \CA_{\bd|\bk,\bell}
\end{equation}
where we stress the fact that the algebra in the domain of \eqref{eqn:hom} is the integral version \eqref{eqn:integral version} and not the localized version \eqref{eqn:double shifted loc}.

\medskip 

\begin{proof} \emph{of Theorem \ref{thm:main}:} Since we identified $\CS^+\cong \kha$ in \eqref{eqn:k shuffle int} and defined the shifted Drinfeld double of the loop-nilpotent $K$-HA in \eqref{eqn:rigorous main} to be the shifted double shuffle algebra \eqref{eqn:integral version}, then \eqref{eqn:hom} is precisely the sought-for homomorphism \eqref{eqn:main}. 
\end{proof}

\bigskip

\section{Appendix A: shuffle algebras}
\label{sec:a}

\medskip

\subsection{Multiplicativity}

In the present subsection, we prove the following.

\medskip 

\begin{proposition}
\label{prop:algebras}

The set $\CS^+$ of Definition \ref{def:shuffle int} is an algebra.

\end{proposition}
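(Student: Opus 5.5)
We need to show that if $E \in \CS^+$ of degree $\bn$ and $E' \in \CS^+$ of degree $\bn'$, then $E*E'$ again satisfies the divisibility of Definition \ref{def:shuffle int} for every $I$-composition $P$ of $\bn+\bn'$. The approach is the standard one from \cite[Proposition 3.9]{Integral}: specialize each summand of the symmetrization \eqref{eqn:mult} separately, and check that every summand is divisible by \eqref{eqn:factor}, either through the wheel-type factors coming from $\text{Spec}(E)$ and $\text{Spec}(E')$ or through the $\zeta$ factors.

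First I would record that $E*E'$ is a Laurent polynomial with coefficients in $\ring$. The only denominators in \eqref{eqn:mult} are the factors $1-z_{ia}/z_{ib}$ coming from $\zeta_{ii}$. After symmetrization these cancel against antisymmetry in the usual way, and the division by $\bn!\bn'!$ is absorbed by the orbit structure. So $E*E'$ lies in $\ring[z^{\pm1}]^{\sym}$, and only the divisibility condition needs to be checked.

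Next, fix an $I$-composition $P$ of $\bn+\bn'$ and consider $\text{Spec}_P(E*E')$. A summand of the symmetrization corresponds to a choice, for each string $x_{ia}, x_{ia}q,\dots,x_{ia}q^{n_i^{(a)}-1}$, of which of its points are assigned to the variables of $E$ and which to those of $E'$. The factor $\zeta_{ii}(z/w)$ contains $1-z/(wq)$, which vanishes when $z=wq$ with $z$ in the $E$-group and $w$ in the $E'$-group. Hence the only nonvanishing summands are those in which, along each string, the $E$-points form an initial segment $x_{ia},\dots,x_{ia}q^{m_a-1}$ and the $E'$-points form the remaining final segment. Each such summand thus splits every string of length $n$ into two strings of lengths $m$ and $n-m$.

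For a surviving summand, I would apply the hypotheses: $\text{Spec}(E)$ is divisible by \eqref{eqn:factor} for the composition $\{m_a\}$, and $\text{Spec}(E')$ is divisible by \eqref{eqn:factor} for the composition $\{n_a-m_a\}$, with base points shifted by $q^{m_a}$. The specialized $\zeta$ factors between the two groups supply further factors $(1-x_{ia}t_\alpha q^{c}/x_{jb})$. The $q$-Pochhammer factor $(1-q^{-1})\cdots(1-q^{-n})$ has to be recovered from the products $(1-q^{-1})\cdots(1-q^{-m})$ and $(1-q^{-1})\cdots(1-q^{-(n-m)})$, together with the diagonal $\zeta_{ii}$ ratios $(1-q^{-c-1})/(1-q^{-c})$ evaluated between the two segments and summed over the split points. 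This step can only be done at the level of the full sum over $m$, via the $q$-binomial identity, so for these factors I would argue on the whole sum rather than summand by summand. For the arrow factors I expect a pointwise combinatorial inequality.

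The main obstacle is the convolution inequality for the arrow factors. For each $c$, one must show that
\[
\chi_{m,m'}(c) + \chi_{n-m,n'-m'}(c+m'-m) + (\text{multiplicity of } c \text{ among } \zeta\text{-factors}) \geq \chi_{n,n'}(c),
\]
with careful bookkeeping of shifts and of the $\delta_{nn'}$ correction. I would check this by viewing $\chi_{n,n'}$ as counting pairs of points, one in each string, whose ratio is $q^c$. The three terms then count the pairs of type (E,E), (E',E') and the mixed pairs, while the $-\delta$ and loop corrections are handled by hand.
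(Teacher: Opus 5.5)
\textbf{The step that fails: the $q$-Pochhammer factor.} Your overall plan is the paper's: check divisibility summand by summand, use $\zeta_{ii}(q)=0$ to see that only the initial/final-segment splittings of each string survive, and combine the factors coming from $\text{Spec}(E)$, $\text{Spec}(E')$ and the cross $\zeta$'s via a pair-counting inequality for $\chi$. But you claim that $(1-q^{-1})\cdots(1-q^{-n})$ ``can only be'' recovered from the whole sum over split points via a $q$-binomial identity. That is false, and that route cannot work. Different surviving summands involve different specializations of $E$ and $E'$ and different cross-string $\zeta$ factors, which are rational functions of the $x$'s. So there is no common scalar to which a $q$-binomial summation identity could be applied. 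As written, your argument does not establish divisibility by the Pochhammer part of the factor.

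The missing observation is that the Pochhammer factor already appears inside each surviving summand. For a string of length $n$ split as $m+(n-m)$, the diagonal parts of the $\zeta_{ii}$ factors between its two segments telescope in $s$ for fixed $r$:
\[
\prod_{r=0}^{m-1}\prod_{s=m}^{n-1}\frac{1-q^{r-s-1}}{1-q^{r-s}}=\prod_{r=0}^{m-1}\frac{1-q^{r-n}}{1-q^{r-m}}=\frac{(1-q^{-1})\cdots(1-q^{-n})}{(1-q^{-1})\cdots(1-q^{-m})\cdot(1-q^{-1})\cdots(1-q^{-(n-m)})}.
\]
This is the Gaussian binomial $\binom{n}{m}_{q^{-1}}\in\BZ[q^{-1}]\subset R$. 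Multiplying it by the two Pochhammers supplied by $E$ and $E'$ gives exactly $(1-q^{-1})\cdots(1-q^{-n})$, which is how factor \eqref{eqn:3} in the paper works. So the fix is easy, but the step as you proposed it fails.

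\textbf{Corrections for the arrow factors.} Your plan here matches the paper, with three corrections.
\begin{itemize}
\item \emph{Shift sign.} The shift is $c+m-m'$, not $c+m'-m$. The $E'$-strings start at $x_{ia}q^{m}$ and $x_{jb}q^{m'}$, so the factor $1-\frac{x_{ia}q^{m}t_\alpha}{x_{jb}q^{m'}q^{c'}}$ equals $1-\frac{x_{ia}t_\alpha}{x_{jb}q^{c}}$ with $c'=c+m-m'$.
\item \emph{No separate $\delta_{nn'}$ correction.} It is built into the description $\chi_{n,n'}(c)=\#\{(r,s)\in[0,n)\times[0,n'):s-r=c\}$ for $c>0$, and $\#\{s+1-r=c\}$ for $c\le 0$.
\item \emph{The comparison is an inequality, not a decomposition of pairs by type.} The (E,E) term uses the same rule as $\chi_{n,n'}(c)$. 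The (E',E') term switches rules at $c=m'-m$ rather than at $c=0$. Mixed pairs with an $E$-source and $E'$-target always use $s-r=c$, and those with an $E'$-source and $E$-target always use $s+1-r=c$.
\end{itemize}
The inequality then follows by a shift-by-one injection. For $0<c\le m'-m$, use $(r,r+c)\mapsto(r,r+c-1)$ on rows $r\ge m$. For $m'-m<c\le 0$, use $(s-c+1,s)\mapsto(s-c,s)$ on columns $s\ge m'$. In all remaining ranges the rules agree block by block, or the mismatched mixed block contains no pairs. Loops need no extra treatment.
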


\medskip 

\begin{proof} Let us consider shuffle elements
$$
E(z_{i1},\dots,z_{in_i})_{i \in I} \quad \text{and} \quad E'(z_{i1},\dots,z_{in_i'})_{i \in I}
$$
which satisfy the divisibility conditions of Definition \ref{def:shuffle int}, and we will prove that their shuffle product $E*E'$ also satisfies the same divisibility conditions. In fact, we will prove the stronger statement that every summand in \eqref{eqn:mult} corresponding to every possible shuffle of the variables $\{z_{i1},\dots,z_{in_i}\}_{i \in I}$ and $\{z_{i,n_i+1},\dots,z_{i,n_i+n_i'}\}_{i \in I}$ satisfies the required divisibility conditions. Thus, let us take any $I$-composition
$$
\left\{n_i+n_i' = k_i^{(1)} + \dots + k_i^{(d_i)} \right\}_{i \in I}
$$
and specialize the variables of
\begin{equation}
\label{eqn:summand}
E(z_{i1},\dots,z_{in_i})_{i \in I} E'(z_{i,n_i+1},\dots,z_{i,n_i+n_i'})_{i \in I} \prod_{i,j \in I} \prod_{a=1}^{n_i} \prod_{b={n_j+1}}^{n_j+n_j'} \zeta_{ij} \left(\frac {z_{ia}}{z_{jb}} \right)
\end{equation}
to the geometric progressions
$$
\left\{x_{ia},x_{ia}q,\dots,x_{ia}q^{k_i^{(a)}-1} \right\}_{i \in I, a \in \{1,\dots,d_i\}}
$$
Recall that
$$
\zeta_{ij}(x) \sim \left(\frac {1-xq^{-1}}{1-x}\right)^{\delta_{ij}} \short (1-xt_{\alpha})
$$
where $\sim$ means that the two sides of the equation are equal up to some monomials that will not affect the upcoming argument. It is clear from the formula above that $\zeta_{ii}(q) = 0$, so the only specializations which are non-zero in \eqref{eqn:summand} are those for which
\begin{align*} 
&x_{ia},\dots,x_{ia}q^{\ell_i^{(a)}-1} \qquad \text{ are plugged into the variables of }E \\
&x_{ia}q^{\ell_i^{(a)}},\dots,x_{ia}q^{k_i^{(a)}-1} \ \text{ are plugged into the variables of }E'
\end{align*}
for some $\{0 \leq \ell_i^{(a)} \leq k_i^{(a)}\}_{i,a}$. But then the specialization of \eqref{eqn:summand} is divisible by
\begin{equation}
\label{eqn:1}
\prod_{i \in I} \prod_{a=1}^{d_i} \left[(1-q^{-1})\dots (1-q^{-\ell_i^{(a)}})\right] \prod_{\alpha : i \rightarrow j} \mathop{\prod_{1 \leq a \leq d_i}}_{1 \leq b \leq d_j} \prod_{c \in \BZ} \left(1- \frac {x_{ia}t_{\alpha}}{x_{jb}q^{c}} \right)^{\chi_{\ell_i^{(a)},\ell_j^{(b)}}(c)} 
\end{equation}
from the divisibility property of $E$, 
\begin{multline}
\label{eqn:2}
\prod_{i \in I} \prod_{a=1}^{d_i} \left[(1-q^{-1})\dots (1-q^{-k_i^{(a)}+\ell_i^{(a)}})\right] \\ \prod_{\alpha : i \rightarrow j} \mathop{\prod_{1 \leq a \leq d_i}}_{1 \leq b \leq d_j} \prod_{c \in \BZ} \left(1- \frac {x_{ia}t_{\alpha}}{x_{jb}q^{c}} \right)^{\chi_{k_i^{(a)}-\ell_i^{(a)},k_j^{(b)}-\ell_j^{(b)}}(c+\ell_i^{(a)}-\ell_j^{(b)})} 
\end{multline}
from the divisibility property of $E'$, and
\begin{multline}
\label{eqn:3}
\prod_{i \in I} \prod_{a=1}^{d_i} \prod_{r=0}^{\ell_i^{(a)}-1} \prod_{s=\ell_i^{{(a)}}}^{k_i^{(a)}-1} \frac {1-q^{r-s-1}}{1-q^{r-s}} \prod_{\alpha : i \rightarrow j} \mathop{\prod_{1 \leq a \leq d_i}}_{1 \leq b \leq d_j} \\ \left[  \prod_{r=0}^{\ell_i^{(a)}-1} \prod_{s=\ell_j^{(b)}}^{k_j^{(b)}-1} \left(1 - \frac {x_{ia}q^r t_{\alpha}}{x_{jb}q^s} \right) \prod_{r=\ell_i^{(a)}}^{k_i^{(a)}-1} \prod_{s=0}^{\ell_j^{(b)}-1} \left(1 - \frac {x_{ia}q^r t_{\alpha}}{x_{jb}q^{s+1}} \right) \right]
\end{multline}
from the zeta factors which appear in \eqref{eqn:summand}. Multiplying out the factors in \eqref{eqn:1}, \eqref{eqn:2}, \eqref{eqn:3} precisely produces the divisibility factor \eqref{eqn:factor}, due to the elementary inequality
\begin{equation}
\label{eqn:elementary identity}
\chi_{k,k'}(c) \leq \chi_{\ell,\ell'}(c) + \chi_{k-\ell,k'-\ell'}(c+\ell-\ell') + 
\end{equation}
$$
\Big|\{0 \leq r < \ell, \ell' \leq s < k' \text{ s.t. }c = s-r\}\Big| + \Big|\{\ell \leq r < k, 0 \leq s < \ell' \text{ s.t. }c = s+1-r\}\Big|
$$
that holds for all $0 \leq \ell \leq k$, $0 \leq \ell' \leq k'$. To prove \eqref{eqn:elementary identity}, we will invoke the following explicit description of the $\chi$ function
\begin{equation}
\label{eqn:explicit chi}
\chi_{k,k'}(c) = \begin{cases} \Big| \{ (r,s) \in \{0,\dots,k-1\} \times \{0,\dots, k'-1\} \text{ s.t. } s-r = c  \} \Big| &\text{if }c > 0 \\ \Big| \{ (r,s) \in \{0,\dots,k-1\} \times \{0,\dots, k'-1\} \text{ s.t. } s+1-r = c \} \Big| &\text{if }c \leq 0 \end{cases}
\end{equation}
which we leave to the reader. Therefore, we have
\begin{align*}
&\text{LHS of \eqref{eqn:elementary identity}} - \chi_{\ell,\ell'}(c) = \begin{cases} \Big| \{ (r,s) \in A \text{ s.t. } s-r = c \} \Big| &\text{if }c > 0 \\ \Big| \{ (r,s) \in A \text{ s.t. } s+1-r = c \} \Big| &\text{if }c \leq 0 \end{cases} \\
&\text{RHS of \eqref{eqn:elementary identity}} - \chi_{\ell,\ell'}(c) = \begin{cases} \Big| \{ (r,s) \in B' \text{ s.t. } s-r = c \} \Big| &\text{if }c > \ell' - \ell \\ \Big| \{ (r,s) \in B'' \text{ s.t. } s+1-r = c \} \Big| &\text{if }c \leq \ell' - \ell \end{cases}
\end{align*}
where we set $A = \{0,\dots,k-1\} \times \{0,\dots, k'-1\} \ \backslash \ \{0,\dots,\ell-1\} \times \{0,\dots, \ell'-1\}$, $B' = \{0,\dots,k-1\} \times \{\ell',\dots,k'-1\}$ and $B'' = \{\ell,\dots,k-1\} \times \{0,\dots,k'-1\}$. While it is true that $A \supset B'$ and $A \supset B''$, it is clear that for any $(r,s) \in A \backslash B'$ we have $s-r < \ell'-\ell$ and for any $(r,s) \in A \backslash B''$ we have $s+1-r > \ell' - \ell + 1$. Therefore, the difference between the RHS and LHS of \eqref{eqn:elementary identity} is
$$
\begin{cases} \Big| \{ (r,s) \in B'' \text{ s.t. } s+1-r = c \} \Big| -  \Big| \{ (r,s) \in A \text{ s.t. } s-r = c \} \Big| &\text{if }0 <  c \leq \ell'-\ell \\ \Big| \{ (r,s) \in B' \text{ s.t. } s-r = c \} \Big| -  \Big| \{ (r,s) \in A \text{ s.t. } s+1-r = c \} \Big| &\text{if }\ell'-\ell < c \leq 0 \\ 0 &\text{otherwise}  \end{cases}
$$
It is elementary to show that the numbers above are all 0 or 1, which proves \eqref{eqn:elementary identity}.

\end{proof}

\medskip 

\subsection{Generators}

\medskip 

Having shown that $\CS^+$ is an algebra, we will prove that it coincides with the $\ring$-subalgebra of $\CV^+$ generated by the elements $e_{\bn,g}$ of \eqref{eqn:special}, as $\bn$ runs over $\nn$ and $g$ over color-symmetric Laurent polynomials with coefficients in $R$.

\medskip 

\begin{proof} \emph{of Theorem \ref{thm:shuffle int}:} Denote the $\ring$-subalgebra generated by the $e_{\bn,g}$'s by
\begin{equation}
\label{eqn:subalgebra generated by}
\bar{\CS}^+ \subset \CV^+
\end{equation}
The $e_{\bn,g}$'s lie in the set $\CS^+$ of Definition \ref{def:shuffle int} because if an $I$-composition $P$ has at least one part of size $>1$, the polynomial $\text{Spec}_P(e_{\bn,g})$ of \eqref{eqn:specialization} vanishes identically (meanwhile, if $P$ is the finest $I$-composition with all parts equal to 1, the divisibility condition of Definition \ref{def:shuffle int} is trivial). Since we already showed in Proposition \ref{prop:algebras} that $\CS^+$ is an $\ring$-subalgebra of $\CV^+$, we conclude that
\begin{equation}
\label{eqn:star}
\bar{\CS}^+ \subseteq \CS^+
\end{equation}
For the opposite inclusion, we will consider the specialization maps
$$
\text{Spec}_P : \CS^+ \rightarrow \ring[x_{i1}^{\pm 1},\dots,x_{id_i}^{\pm 1}]_{i \in I}
$$
associated to all $I$-partitions $P$, in the notation \eqref{eqn:composition}. Consider the total lexicographic order on partitions of the same size
$$
(k_1 \geq k_2 \geq \dots > 0) > (\ell_1 \geq \ell_2 \geq \dots > 0)
$$
if there exists $s$ such that $k_1=\ell_1$, \dots, $k_{s-1} = \ell_{s-1}$ and $k_s > \ell_s$. This induces a partial order on the set of $I$-partitions, where we set $P \geq P'$ if the $i$-th part of $P$ is greater than or equal to the $i$-th part of $P'$ for all $i \in I$. With this in mind, let
$$
\CS^+_P = \bigcap_{P' > P} \text{Ker}(\text{Spec}_{P'})
$$
Note that when $P$ is the coarsest $I$-partition with a single part of every color $i \in I$ (which is maximal with respect to the partial order), we have $\CS^+_P = \CS^+$.

\medskip

\begin{claim}
\label{claim}

For any $I$-partition $P$ and any $E \in \CS^+_P$, there exists $E' \in \bar{\CS}^+ \cap \CS^+_P$ such that
\begin{equation}
\label{eqn:claim}
\emph{Spec}_P(E) = \emph{Spec}_P(E')
\end{equation}

\end{claim}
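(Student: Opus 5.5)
Fix an $I$-partition $P=\{n_i=n_i^{(1)}+\dots+n_i^{(d_i)}\}$ and $E\in\CS^+_P$. The plan is to build $E'$ as a product of generators of the form \eqref{eqn:special}, one factor for each block size. For each $i$ and each distinct value $m$ occurring among the parts $n_i^{(a)}$, let $\mu_{i,m}$ be its multiplicity. For $m=1,2,\dots$ consider the vector $\bn_m=(\mu_{i,m})_{i\in I}$. Take $E'$ to be an $\ring$-linear combination of shuffle products of the form
$$
e_{\bn_{M},g_M} * e_{\bn_{M},g'_M} * \dots * e_{\bn_1,g_1},
$$
where each $e_{\bn_m,\cdot}$ is repeated $m$ times. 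The heuristic is that, after specializing to geometric progressions, a block of length $m$ must receive one variable from each of $m$ consecutive factors, since $\zeta_{ii}(q)=0$ kills the other orderings.

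First I would check that any such product lies in $\CS^+_P$. This needs $\text{Spec}_{P'}$ to vanish for $P'>P$. A product of $N$ factors, each vanishing when two same-colour variables are in ratio $q$, can only survive a specialization whose block lengths are bounded by the number of factors and are distributed compatibly. Running the same bookkeeping as in the proof of Proposition~\ref{prop:algebras}, one sees that a nonzero specialization forces $P'\le P$ in the lexicographic order.

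Next I would compute $\text{Spec}_P$ of the product explicitly. Only shuffles placing $x_{ia}q^{r}$ in the $(r+1)$-st factor of the appropriate length-$n_i^{(a)}$ group survive. The result is a sum over the symmetries permuting equal-length blocks of
$$
\prod(\text{the } g\text{'s evaluated at } x_{ia}q^{r}) \times (\text{universal product of }\zeta\text{-factors and }(1-z/zq)\text{ factors}).
$$
The key point to verify is that this universal product equals exactly the divisor \eqref{eqn:factor} times a unit. The $(1-q^{-1})\cdots(1-q^{-n})$ factors come from $\zeta_{ii}$ between consecutive factors, and the exponents $\chi$ come from counting the pairs $(r,s)$, via the explicit description \eqref{eqn:explicit chi}.

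It remains to match the quotient. Since $E\in\CS^+$, we have $\text{Spec}_P(E)=(\text{factor \eqref{eqn:factor}})\cdot G$ with $G$ a Laurent polynomial over $\ring$. This $G$ is symmetric under permuting same-colour blocks of equal length. Moreover, because $E$ vanishes under all coarser specializations, $G$ should also satisfy residual vanishing at $x_{ia}=x_{ib}q^{\pm n}$. Here the geometric assumption \eqref{eqn:assumption geometric} enters, ensuring that the loop factors $(1-t_\alpha q^{c})$ are non-zero-divisors, so the cancellations are legitimate. Symmetric Laurent polynomials are spanned over $\ring$ by products of symmetric polynomials in each equal-length group, so choosing the $g$'s of the first factor of each group to be these, and the others equal to $1$, reproduces $G$.

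\textbf{Main obstacle.} The hard step is proving the exact equality of the universal product with \eqref{eqn:factor}, rather than mere divisibility. This needs precisely the reverse, sharp form of inequality \eqref{eqn:elementary identity}. It also requires handling cross terms between blocks of different lengths, whose $\zeta$-factors must produce exactly the $\chi_{n,n'}$ exponents with no leftover non-unit factors such as $1-t_\alpha$. I would attack this by first treating a single colour with two blocks, and then arguing multiplicatively.
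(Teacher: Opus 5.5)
\paragraph{There is a genuine gap, starting with the shape of $E'$.}
Grouping equal-length blocks and using $m$ factors of size $\bn_m$ gives more factors than the longest part as soon as two different part lengths occur. Then nothing forces a block of length $m$ into ``its'' group, and a progression can run through factors of different groups.

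Take one vertex, no arrows, $P=(2,1)$, so $E'$ is built from $e_{1,g}*e_{1,g'}*e_{1,g''}$.
\begin{itemize}[leftmargin=*]
\item At $P'=(3)>P$, the only assignment not killed by $\zeta(q)=0$ puts $x,xq,xq^2$ into the three factors in this order. Hence $\text{Spec}_{(3)}$ of this product is a nonzero constant times $g(x)g'(xq)g''(xq^2)$.
\item $\text{Spec}_{(2,1)}$ of the same product has three surviving assignments, not one: the singleton $x_2$ can sit in any of the three factors.
\end{itemize}
So your claim that every such product lies in $\CS^+_P$ is false, and so is your ``unique shuffle'' evaluation of $\text{Spec}_P(E')$. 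Membership $E'\in\CS^+_P$ would have to come from cancellations that your recipe does not control. It also cannot come for free from matching $\text{Spec}_P$: for $P=(2,2,1)$, $\text{Spec}_{P'}$ at the coarser $P'=(3,1,1)$ is not a further specialization of $\text{Spec}_P$.

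The missing idea is the conjugate partition \eqref{eqn:transpose}. Take $E'$ to be a combination of $e_{\bm^{(1)},g_1}*e_{\bm^{(2)},g_2}*\cdots$ with $m_i^{(a)}=\#\{b : n_i^{(b)}\geq a\}$, i.e.\ exactly as many factors as the longest part. A block of length $n'$ must occupy $n'$ distinct factors. Hence the $b$ longest blocks of a surviving $P'$ have total length at most $\sum_a\min(b,m_i^{(a)})=n_i^{(1)}+\dots+n_i^{(b)}$, which contradicts $P'>P$. Moreover, in $\text{Spec}_P$ the unique survivor puts $x_{ib}q^{a-1}$ into the $a$-th factor.

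\paragraph{Your ``main obstacle'' is misidentified.}
The arrow factors do produce exactly the $\chi$-exponents; this is just \eqref{eqn:explicit chi}, and no sharp inequality is needed. But the universal product is never \eqref{eqn:factor} times a unit. Already for one vertex and $P=(1,1)$,
$\text{Spec}_P(e_{2,g})=g\,(1-q^{-1})^2\bigl(1-\frac{x_1}{x_2q}\bigr)\bigl(1-\frac{x_2}{x_1q}\bigr)$.
With the correct shape, $\text{Spec}_P(E')=g_1(x_{i1},x_{i2},\dots)\,g_2(x_{i1}q,x_{i2}q,\dots)\cdots\Pi_1\Pi_2\Pi_3$. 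Here $\Pi_1\Pi_2$ is \eqref{eqn:factor}, and
$\Pi_3=\prod_i\prod_{a\neq b}\prod_{s=1+\max(0,n_i^{(a)}-n_i^{(b)})}^{n_i^{(a)}}\bigl(1-\frac{x_{ib}}{x_{ia}q^s}\bigr)$
is an extra non-unit coming from the same-colour factors.

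You must show that $\Pi_3$ divides $\text{Spec}_P(E)$. Setting $x_{ib}=x_{ia}q^s$ in $\text{Spec}_P(E)$ is a further specialization of $\text{Spec}_{P'}(E)$, where $P'$ replaces the parts $n_i^{(a)},n_i^{(b)}$ by $n_i^{(b)}+s,\,n_i^{(a)}-s$. Since $P'>P$, this vanishes. Your ``residual vanishing'' remark points here, but it has to be used to divide out $\Pi_3$.

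The geometric assumption is then needed to make $\Pi_3$ coprime to $\Pi_1\Pi_2$: for a loop $\alpha$, the factors $1-\frac{x_{ia}t_\alpha}{x_{ib}q^c}$ must not coincide with $1-\frac{x_{ib}}{x_{ia}q^s}$. It is not about zero-divisors, since $\ring$ is a domain.

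\paragraph{The final matching step is not easy.}
With the correct shape, each $g_a$ must be symmetric in all blocks of length $\geq a$ at once. So writing the quotient $G$ as a sum of products $g_1(x_{i1},\dots)\,g_2(x_{i1}q,\dots)\cdots$ is not the easy statement about $\prod_m S_{\mu_{i,m}}$-invariants. It is a genuine combinatorial lemma, which the paper imports from the end of the proof of Proposition 3.9 of \cite{Integral}. Your construction made that step look trivial precisely because its factors were too independent --- the same freedom that destroys vanishing at $P'>P$.
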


\medskip 

\noindent By descending induction with respect to the partial order on $I$-partitions, Claim \ref{claim} allows us to recursively associate to any $E \in \CS^+$ an element $E' \in \bar{\CS}^+$ such that \eqref{eqn:claim} holds for all $I$-partitions $P$. When $P$ is the finest $I$-partition with all parts equal to 1 (which is minimal with respect to the partial order), we have that $\text{Spec}_P = \text{Id}$, which implies that $E = E'$. This proves the opposite inclusion to \eqref{eqn:star} and thus completes the proof of Theorem \ref{thm:shuffle int}.

\medskip

\noindent It remains to prove Claim \ref{claim}. Let us consider the $I$-partition
\begin{equation}
\label{eqn:transpose}
m_i^{(a)} = \left| \{b \geq 1 \text{ s.t. } n_i^{(b)} \geq a \}\right|
\end{equation}
for all $i \in I$ and $a \geq 1$ (we allow $m_i^{(a)} = 0$ for large enough $a$). Define 
$$
\bm^{(a)} = (m_i^{(a)})_{i \in I} \in \nn
$$
for all $a \geq 1$, and denote the partial sums by
$$
\mu_i^{(a)} = m_i^{(1)}+\dots+m_i^{(a)}
$$
for all $i,a$. With this in mind, let us consider the shuffle element
\begin{multline}
\label{eqn:e prime}
E' = \text{pSym} \left[  \prod_{a \geq 1} e_{\bm^{(a)},g_a}(z_{i,\mu_i^{(a-1)}+1},\dots,z_{i,\mu_i^{(a)}})_{i \in I} \right. \\ \left. \prod_{i,j \in I} \prod_{a<b}  \prod_{r=\mu_i^{(a-1)}+1}^{\mu_i^{(a)}} \prod_{s=\mu_j^{(b-1)}+1}^{\mu_j^{(b)}} \zeta_{ij} \left(\frac {z_{ir}}{z_{js}} \right) \right]
\end{multline}
for certain sums of products of to-be-defined symmetric Laurent polynomials 
\begin{equation}
\label{eqn:tensor}
g_1(z_{i,\mu_i^{(0)}+1},\dots,z_{i,\mu_i^{(1)}})_{i \in I} \otimes g_2(z_{i,\mu_i^{(1)}+1},\dots,z_{i,\mu_i^{(2)}})_{i \in I} \otimes \dots
\end{equation}
The notation ``pSym''  in \eqref{eqn:e prime} refers to the operator of partial symmetrization which takes a function that is already symmetric in 
\begin{equation}
\label{eqn:variables}
z_{i,\mu_i^{(a-1)}+1},\dots,z_{i,\mu_i^{(a)}}
\end{equation}
for all $i\in I$, $a \in \{1,\dots,d_i\}$, and produces a function symmetric in $z_{i1},\dots,z_{in_i}$ for all $i \in I$. We will now show that the tensor \eqref{eqn:tensor} can be chosen so that the shuffle element $E'$ satisfies the conditions of Claim \ref{claim}. Firstly, $E'$ is a sum of shuffle products of $e_{\bm,g}$'s by construction, so it lies in $\bar{\CS}^+$. Secondly, we must show that 
\begin{equation}
\label{eqn:secondly}
\text{Spec}_{P'}(E') = 0
\end{equation}
for all $I$-partitions $P' > P$. To this end, we must specialize the variables of \eqref{eqn:e prime} to the geometric progressions 
\begin{equation}
\label{eqn:progression}
\Big\{z_{i1},\dots,z_{in_i}\Big\}  = \left\{x_{i1},x_{i1}q,\dots,x_{i1}q^{{n'}_i^{(1)}-1}, \dots, x_{id'_i},x_{id'_i}q,\dots,x_{id'_i} q^{{n'}_i^{(d'_i)}-1} \right\}
\end{equation}
where we write $P' = \{{n'}_i^{(1)} \geq \dots \geq {n'}_i^{(d_i')}>0\}_{i \in I}$. The fact that $P'>P$ implies that there must exist some $i \in I$ and $b'\geq 1$ such that
\begin{equation}
\label{eqn:such that}
{n'}_i^{(1)} = n_i^{(1)}, \dots, {n'}_i^{(b'-1)} = n_i^{(b'-1)} \text{ and } {n'}_i^{(b')} > n_i^{(b')}
\end{equation}
However, let us see what it would take to get a non-zero result when we plug the geometric progressions \eqref{eqn:progression} into the right-hand side of \eqref{eqn:e prime}. Because any $e_{\bm,g}$ vanishes when any two of its variables of the same color differ by a ratio of $q$, the only way in which we could obtain a non-zero result is if the variables
$$
x_{ib},x_{ib}q,\dots,x_{ib}q^{{n'}_i^{(b)}-1} \text{ were plugged into distinct factors } e_{\bm^{(a)},g_a}
$$
for all $b \geq 1$. This is only possible if for all $i \in I$, $b\geq 1$, we have
$$
{n'}_i^{(1)} + \dots + {n'}_i^{(b)} \leq \left|\{a\geq 1 \text{ s.t. }m_i^{(a)} \geq 1 \}\right| + \dots + \left|\{a\geq 1 \text{ s.t. }m_i^{(a)} \geq b \}\right|
$$
Since the right-hand side of the expression above is equal to ${n}_i^{(1)} + \dots + {n}_i^{(b)}$ by \eqref{eqn:transpose}, we obtain a contradiction of \eqref{eqn:such that}, so there is no choice but for \eqref{eqn:secondly} to be true. Thirdly and lastly, we must ensure that
\begin{equation}
\label{eqn:thirdy}
\text{Spec}_P(E') = \text{Spec}_P(E)
\end{equation}
and for this we will use our freedom in choosing the tensor \eqref{eqn:tensor}. Due to the discussion in our proof of \eqref{eqn:secondly} above, there is only one way to plug the specialization 
$$
\Big\{z_{i1},\dots,z_{in_i}\Big\}  = \left\{x_{i1},x_{i1}q,\dots,x_{i1}q^{{n}_i^{(1)}-1}, \dots, x_{id_i},x_{id_i}q,\dots,x_{id_i} q^{{n}_i^{(d_i)}-1} \right\}
$$
into the right-hand side of \eqref{eqn:e prime} and get a non-zero result. Namely, we must plug 
\begin{align*}
&x_{i1},x_{i2},x_{i3},\dots, \quad \ \text{ into the variables of } e_{\bm^{(1)},g_1} \\
&x_{i1}q,x_{i2}q,x_{i3}q,\dots, \text{ into the variables of } e_{\bm^{(2)},g_2}
\end{align*}
etc (the reason why variables with smaller powers of $q$ must be plugged into $e_{\bm^{(a)},g_a}$ with smaller $a$ is the presence of the zeta factors in \eqref{eqn:e prime} and the fact that $\zeta_{ii}(q)=0$ for all $i\in I$). When this is done, we see that
\begin{multline}
\label{eqn:factor 1}
\text{Spec}_P(E') = g_1(x_{i1},x_{i2},\dots)_{i \in I} g_2(x_{i1}q,x_{i2}q,\dots)_{i \in I} \dots \\ \prod_{i \in I} \prod_{a , b} \left(1 - \frac {x_{ia}}{x_{ib}q}\right)^{\min(n_i^{(a)}, n_i^{(b)})} \prod_{i,j \in I} \prod_{a,b} \mathop{\prod^{r < s}_{0 \leq r < n_i^{(a)}}}_{0 \leq s < n_j^{(b)}} \zeta_{ij} \left(\frac {x_{ia}q^{r}}{x_{jb}q^{s}}\right)
\end{multline}
We will rearrange the product of factors in the expression above as
\begin{equation}
\label{eqn:factor 2}
\text{Spec}_P(E') = g_1(x_{i1},x_{i2},\dots)_{i \in I} g_2(x_{i1}q,x_{i2}q,\dots)_{i \in I} \dots \times \Pi_1 \Pi_2 \Pi_3
\end{equation}
where 
\begin{align}
&\Pi_1 = \prod_{i \in I} \prod_{1 \leq a \leq d_i} \left[(1-q^{-1})(1-q^{-2})\dots(1-q^{-n_i^{(a)}}) \right] \label{eqn:pi 1} \\
&\Pi_2 = \prod_{\alpha : i \rightarrow j} \mathop{\prod_{1 \leq a \leq d_i}}_{1 \leq b \leq d_j} \underbrace{\mathop{\prod^{r < s}_{0 \leq r < n_i^{(a)}}}_{0 \leq s < n_j^{(b)}} \left(1 - \frac {x_{ia} q^r t_{\alpha}}{x_{jb} q^s} \right)  \mathop{\prod^{r > s}_{0 \leq r < n_i^{(a)}}}_{0 \leq s < n_j^{(b)}} \left(1 - \frac {x_{ia} q^{r} t_\alpha}{x_{jb} q^{s+1}} \right)}_{= \prod_{c \in \BZ} \left(1 - \frac {x_{ia} t_{\alpha}}{x_{jb} q^c} \right)^{\chi_{n_i^{(a)},n_j^{(b)}}(c)} \text{ by \eqref{eqn:explicit chi}}} \label{eqn:pi 2} \\
&\Pi_3 = \prod_{i \in I} \prod_{a \neq b} \prod_{s=1+\max(0,n_i^{(a)}-n_i^{(b)})}^{n_i^{(a)}} \left(1 - \frac {x_{ib}}{x_{ia}q^s}\right) \label{eqn:pi 3}
\end{align}
As $E \in \CS^+$, Definition \ref{def:shuffle int} states that
\begin{equation}
\label{eqn:factor 3}
 \Pi_1 \Pi_2 \quad \text{divides} \quad \text{Spec}_P(E)
\end{equation}
However, we will now show that the assumption $\text{Spec}_{P'}(E) = 0$ for all $P' > P$ imposes additional constraints on $\text{Spec}_P(E)$. Specifically, for any $i \in I$, $a \neq b$ and $1+\max(1,n_i^{(a)}-n_i^{(b)}) \leq s \leq n_i^{(a)}$, consider the geometric progressions
\begin{align*}
x_{ia},x_{ia}q,\dots,&x_{ia}q^s,\dots\dots,x_{ia}q^{n_i^{(a)}-1} \\ 
&x_{ib},x_{ib}q,\dots,x_{ib}q^{n_i^{(a)}-s-1},\dots,x_{ib}q^{n_i^{(b)}-1} 
\end{align*}
Further specializing $x_{ib}=x_{ia}q^s$ can also be interpreted as the specialization corresponding to the geometric progressions
\begin{align*}
x_{ia},x_{ia}q,\dots,&x_{ia}q^s,\dots\dots,x_{ia}q^{n_i^{(a)}-1},x_{ia}q^{n_i^{(a)}},\dots, x_{ia}q^{n_i^{(b)}+s-1}\\ 
&x_{ib},x_{ib}q,\dots,x_{ib}q^{n_i^{(a)}-s-1} 
\end{align*}
followed by setting $x_{ib}=x_{ia}q^s$. If we let $P'$ be the $I$-partition obtained from $P$ by replacing the parts
$$
n_i^{(a)},n_i^{(b)} \quad \text{by} \quad n_i^{(b)}+s,n_i^{(a)}-s
$$
as above, then $P'>P$. Thus, the assumption that $\text{Spec}_{P'}(E) = 0$ implies that
$$
\text{Spec}_P(E) \Big|_{x_{ib}=x_{ia}q^s} = 0
$$
which implies that $\text{Spec}_P(E)$ is divisible by the factor $1 - \frac {x_{ib}}{x_{ia}q^s}$. Since these factors are all distinct as $a,b,i,s$ vary (because $q$ is not a root of unity), we conclude that
\begin{equation}
\label{eqn:factor 4}
\Pi_3 \quad \text{divides} \quad \text{Spec}_P(E)
\end{equation}
Since the products $\Pi_1\Pi_2$ and $\Pi_3$ have no linear factors in common (this uses the geometric assumption \eqref{eqn:assumption geometric}), then we conclude that there exists a Laurent polynomial $G$ such that 
\begin{equation}
\label{eqn:factor 5}
\text{Spec}_P(E) = G(x_{i1},\dots,x_{id_i})_{i \in I} \times \Pi_1 \Pi_2 \Pi_3
\end{equation}
Moreover, $G$ is symmetric in $x_{ia}$ and $x_{ib}$ if $n_i^{(a)} = n_i^{(b)}$, due to the color-symmetry of $E$ and the inherent symmetry of the specialization at the $I$-partition $P$. We wish to show that the left-hand sides of \eqref{eqn:factor 2} and \eqref{eqn:factor 5} coincide, which is equivalent to the fact that their right-hand sides coincide:
$$
g_1(x_{i1},x_{i2},\dots)_{i \in I} g_2(x_{i1}q,x_{i2}q,\dots)_{i \in I} \dots = G(x_{i1},\dots,x_{id_i})_{i \in I}
$$
Recall that we have the freedom of choosing the tensor \eqref{eqn:tensor}, but only inasmuch as each $g_a$ is color-symmetric in the variables \eqref{eqn:variables}. Since this symmetry only involves the variables of any given color $i 
\in I$ separately, it suffices to prove the following $|I| = 1$ statement: for any partition $n_1 \geq \dots \geq n_d > 0$ with transposed partition $m_1 \geq m_2 \geq \dots$ and any Laurent polynomial $G(x_1,\dots,x_d)$ which is symmetric in $x_a$ and $x_b$ whenever $n_a = n_b$, we can write
$$
g_1(x_1,\dots,x_{m_1}) g_2(x_1q,\dots,x_{m_2}q,\dots) \dots  = G(x_1,\dots,x_d)
$$
where the $g_a$'s are symmetric Laurent polynomials. This claim was proved in the final part of the proof of Proposition 3.9 of \cite{Integral}, following equation (3.21) therein.

\end{proof}

\section{Appendix B: $K$-theoretic Hall algebras}
\label{sec:b}

\medskip

\subsection{Moduli of quiver representations}
\label{sub:moduli}

For any quiver $Q$ with vertex set $I$ and arrow set $\edge$, we consider 

\medskip 

\begin{itemize}

\item the tripled quiver $\tQ$ as in Subsection \ref{sub:k-ha}

\medskip 

\item the quiver $Q^+$ obtained from $Q$ by adding loops $\omega_i$ at all vertices $i \in I$
    
\end{itemize}

\medskip 

\noindent For all $\bn = (n_i)_{i \in I} \in \nn$, we will consider the affine spaces of $\bn$-dimensional representations of the quivers above, depicted graphically as
\begin{equation}
\label{eqn:representation tilde}
\text{Rep}_{\bn}(\tQ) = \Big\{\underset{\omega_i}{\underset{\circlearrowleft}{\BC}}^{n_i} \xleftrightharpoons[\alpha]{\dalpha} \underset{\omega_j}{\underset{\circlearrowleft}{\BC}}^{n_j}\Big\}
\end{equation}
and 
\begin{equation}
\label{eqn:representation plus}
\text{Rep}_{\bn}(Q^+) = \Big\{\underset{\omega_i}{\underset{\circlearrowleft}{\BC}}^{n_i} \xrightarrow{\alpha} \underset{\omega_j}{\underset{\circlearrowleft}{\BC}}^{n_j}\Big\}
\end{equation}
The affine spaces above are acted on by a torus $T$ as in Subsection \ref{sub:k-ha}: explicitly, $T$ acts on the maps $\alpha,\dalpha,\omega_i$ via the characters $t_\alpha^{-1}, t_{\dalpha}^{-1},q$, respectively, see \eqref{eqn:parameters}-\eqref{eqn:opposite parameters}. We will also consider the closed subsets
\begin{align*}
&\text{Rep}^{\omega-\text{nilp}}_{\bn}(\tQ) \hookrightarrow \text{Rep}_{\bn}(\tQ) \\
&\text{Rep}^{\omega-\text{nilp}}_{\bn}(Q^+) \hookrightarrow \text{Rep}_{\bn}(Q^+)
\end{align*}
corresponding to the condition that all the loops $\omega_i$ act by nilpotent operators. We also have the conjugation action of
$$
\GL_{\bn} = \prod_{i \in I} \GL_{n_i}(\BC)
$$ on all the representation spaces above, which allows us to define the quotient stacks
$$
\mathfrak{M}_{\bn}(\tQ) = \mathrm{Rep}_{\bn}(\tQ)/\GL_{\bn}
$$
and analogously for $Q^+$ or ``$\omega$-nilp'' superscripts. The action of the torus $T$ descends to the stacks above, and the trace of the canonical cubic potential 
$$
\tW = \sum_{\alpha : i \rightarrow j} \Big( \dalpha \alpha \omega_i - \omega_j \alpha \dalpha \Big)
$$
yields a $T$-invariant regular function on all these stacks.

\medskip 

\subsection{Matrix Factorizations} 
\label{MF}

For any triangulated category $\mathcal{C}$, let $K_0(\mathcal{C})$ denote its Grothendieck group. Given a quotient stack $\mathfrak{X}=X/G$ where $X$ is a quasiprojective scheme endowed with an action of a reductive group $G$, we let $K_0(\mathfrak{X}):= K_0(\Perf(\fX))$ be the Grothendieck group of its category of perfect complexes and $G_0(\mathfrak{X}) := K_0(\Db(\Coh(\fX)))$ be the Grothendieck group of its derived category of coherent sheaves. Similarly, $K_0^H$ and $G_0^H$ will refer to the analogous $H$-equivariant $K$-theory groups, given $H \curvearrowright \mathfrak{X}$. With the notations of Subsection \ref{sub:moduli}, we let
$$
\mathrm{MF}^{\mathrm{gr}}(\mathfrak{M}_{\bn}(\tilde{Q}), \Tr(\tilde{W}))
$$
be the category of graded matrix factorizations, in the sense of \cite{padurariu2023categoricalktheoreticdonaldsonthomastheory}. In particular, when $\tW$ is replaced by $0$, then 
$$
\mathrm{MF}^{\mathrm{gr}}(\mathfrak{M}_{\bn}(\tilde{Q}), 0) \simeq \Db(\Coh(\mathfrak{M}_{\bn}(\tilde{Q})))
$$
by \cite[Remark 2.3.7]{toda2021categoricaldonaldsonthomastheorylocal}. Note that one can also consider the usual category of matrix factorizations $\mathrm{MF}(\mathfrak{M}_{\bn}(\tilde{Q}), \Tr(\tilde{W}))$; however by \cite[Corollary 3.13]{toda2021categoricaldonaldsonthomastheorylocal}, the Grothendieck groups of the two categories are isomorphic and we shall identify them in what follows. Let
$$
\nilpstack_{\bn} = \mathrm{Rep}^{\omega-\text{nilp}}_{\bn}(\tQ)/\GL_{\bn} \subset \mathfrak{M}_{\bn}(\tilde{Q})
$$
denote the closed substack where all the loops $\omega_i$ act by nilpotent operators. Then we consider the subcategory (see \cite[Equation 2.2.4]{toda2021categoricaldonaldsonthomastheorylocal} for a general definition)
\begin{equation}  \label{eqn:subcategory}
\mathrm{MF}^{\mathrm{gr}}(\mathfrak{M}_{\bn}(\tilde{Q}), \Tr(\tilde{W}))_{\nilpstack_{\bn}}  \subset \mathrm{MF}^{\mathrm{gr}}(\mathfrak{M}_{\bn}(\tilde{Q}), \Tr(\tilde{W})) \end{equation} defined as the kernel of the restriction functor \[ \mathrm{MF}^{\mathrm{gr}}(\mathfrak{M}_{\bn}(\tilde{Q}), \Tr(\tilde{W})) \rightarrow \mathrm{MF}^{\mathrm{gr}}(\mathfrak{M}_{\bn}(\tilde{Q}) \backslash \nilpstack_{\bn}, \Tr(\tilde{W})).\] 

\medskip

\subsection{Dimensional Reduction}
\label{sub:dimensional reduction}

Let $\mathfrak{X}= X/G$ where $X$ is a smooth affine scheme endowed with an action of a reductive group $G$, and let $E$ be a $G$-equivariant vector bundle on $X$. Suppose there is an action of $\mathbb{C}^*$ on the fibers of $E$ with weight $2$ and let $s \in \Gamma(X,E)$ be a section of $E$. Let $ s^{-1}(0)^{\text{der}} \hookrightarrow X$ be the derived zero locus of $s$ and let $f: \mathrm{Tot}_X(E^{\vee})/G \rightarrow \mathbb{C}$ be the regular function defined by $f(x,v_x) =  \langle s(x),v_x \rangle$ for all $x \in \mathfrak{X}$. In the present context, the phrase dimensional reduction refers to the following equivalence due to \cite{isik2010equivalencederivedcategoryvariety} 
\begin{equation}
\label{eqn:dimensional reduction cat}
\mathrm{MF}^{\mathrm{gr}}(\mathrm{Tot}_X(E^{\vee})/G,f) \simeq \Db(\Coh(s^{-1}(0)^{\text{der}}/G))
\end{equation}
At the level of Grothendieck groups, we obtain the following isomorphism (we may remove the superscript ``der'' in the right-hand side, as the $G$-theory is insensitive to the derived structure, see \cite[Corollary 3.4]{Khan_2022})
\begin{equation}
\label{eqn:dimensional reduction}
K_0(\mathrm{MF}^{\mathrm{gr}}(\mathrm{Tot}_X(E^{\vee})/G,f)) \cong G_0^T(s^{-1}(0)/G).
\end{equation}
Recall the quiver $Q^+$ of Subsection \ref{sub:moduli}, which is obtained from $Q$ by adding loops $\omega_i$ at all the vertices. Let
\begin{equation}
\label{eqn:z subscheme}
Z_{Q^{+},\bn} \subset \mathrm{Rep}_{\bn}(Q^{+})
\end{equation}
be the closed subscheme that parameterizes representations \eqref{eqn:representation plus} for which
\begin{equation}
\label{eqn:equations}
\{ \alpha \omega_i = \omega_j \alpha \}_{\forall \alpha : i \rightarrow j}
\end{equation}
We also write
\begin{equation}
\label{eqn:z derived}
Z^{\text{der}}_{Q^{+},\bn} \subset \mathrm{Rep}_{\bn}(Q^{+})
\end{equation}
for the derived subscheme cut out by the equations \eqref{eqn:equations}. Then taking the equivalence \eqref{eqn:dimensional reduction} at the level of Grothendieck groups implies that (See \cite[3.1.1]{padurariu2022generatorscategoricalhallalgebras} for an application of \eqref{eqn:dimensional reduction cat} to quivers with a cut) and \cite[Corollary 3.4]{Khan_2022})
\begin{equation}
\label{eqn:dim red full}
K^T_0(\mathrm{MF}^{\mathrm{gr}}(\mathfrak{M}_{\bn}(\tilde{Q}), \Tr(\tilde{W})))  \cong G^T_0(Z_{Q^{+},\bn}/\mathrm{GL}_{\bn}).
\end{equation} 
Furthermore, \cite[Proposition 2.3.9]{toda2021categoricaldonaldsonthomastheorylocal} and \cite[Example 2.13]{toda2021categoricaldonaldsonthomastheorylocal} show that 
\begin{equation}
\label{eqn:dim red nilp} 
K^T_0(\mathrm{MF}^{\mathrm{gr}}(\mathfrak{M}_{\bn}(\tilde{Q}), \Tr(\tilde{W})_{\nilpstack_{\bn}})  \cong G^T_0(Z^{\omega\text{-nilp}}_{Q^+,\bn}/\mathrm{GL}_{\bn}) 
\end{equation} 
where 
\begin{equation}
\label{eqn:z nilp subscheme} 
Z^{\omega\text{-nilp}}_{Q^+,\bn} \subset Z_{Q^+,\bn}
\end{equation}
is the closed subscheme of \eqref{eqn:representation plus} determined by the condition that the loops $\omega_i$ all act by nilpotent operators. To provide some more details on \eqref{eqn:dim red nilp}, note that \loccit proves that the equivalence \eqref{eqn:dimensional reduction cat} restricts to an equivalence of categories 
\[ \mathrm{MF}^{\mathrm{gr}}(\mathrm{Tot}_X(E^{\vee})/G,f)_Z \simeq \mathcal{C}_{Z}\] 
for any conical subset $Z/G \subset \mathrm{crit}(f)/G$, where $\mathcal{C}_{Z}$ refers to the subcategory of $\Db(\Coh(s^{-1}(0)^{\mathrm{der}}/\mathrm{G}))$ corresponding to sheaves with singular support in $Z$. By \cite[Example 2.13]{toda2021categoricaldonaldsonthomastheorylocal}, the subcategory $\mathcal{C}_{\nilpstack_{\bn}}$ consists of coherent sheaves on the derived version of $Z^{\omega\text{-nilp}}_{Q^+,\bn}/\mathrm{GL}_{\bn}$. Since the Grothendieck group is insensitive to the derived structure by \cite[Corollary 3.4]{Khan_2022}, we obtain the isomorphism \eqref{eqn:dim red nilp}. 

\medskip

\subsection{The loop nilpotent $K$-theoretic Hall algebra}
\label{sub:loop}

We follow \cite{Padurariu} and subsequent works for basics on $K$-HAs of quivers with potential, itself inspired by the construction of cohomological Hall algebras in \cite{KS}. Take 
\[K_{\tQ,\tW,\bn}^T := K^T_0(\mathrm{MF}^{\mathrm{gr}}(\mathfrak{M}_{\bn}(\tilde{Q}), \Tr(\tilde{W})))\] 
and consider the $K$-theoretic Hall algebra 
\begin{equation}
\label{eqn:k-ha appendix}
K_{\tQ,\tW}^T = \bigoplus_{\bn \in \mathrm{N}^I} K_{\tQ,\tW,\bn}^T
\end{equation}
The multiplication is defined by taking the image in the Grothendieck group of the monoidal structure (\cite{padurariu2021categoricalktheoretichallalgebras}) on the direct sum of matrix factorization categories
\begin{equation}
\label{eqn:catha}
\mathrm{MF}^{\text{gr}}(\mathfrak{M}(\tilde{Q}), \Tr(\tilde{W})):= \bigoplus_{\bn \in \mathbb{N}^I} \mathrm{MF}^{\text{gr}}(\mathfrak{M}_{\bn}(\tilde{Q}), \Tr(\tilde{W})) 
\end{equation}
In a nutshell, the construction of the monoidal structure uses the diagram
\[
\begin{tikzcd}[cramped]
{\mathfrak{M}_{\bn_1}(\tQ) \times \mathfrak{M}_{\bn_2}(\tQ)}
&
{\mathfrak{M}_{\bn_1,\bn_2}(\tQ)}
\arrow[l, "{\pi_1}"']
\arrow[r, "{\pi_2}"]
&
{\mathfrak{M}_{\bn_1+\bn_2}(\tQ)}
\end{tikzcd}
\]
for all $\bn^1,\bn^2 \in \nn$, where $\mathfrak{M}_{\bn_1,\bn_2}(\tQ)$ is the quotient stack of short exact sequences $0 \rightarrow \rho_1 \rightarrow \rho \rightarrow \rho_2 \rightarrow 0$, where $\rho_1$ and $\rho_2$ are of dimension $\bn_1$ and $\bn_2$. The morphism $\pi_1: \mathfrak{M}_{\bn_1,\bn_2}(\tQ) \rightarrow \mathfrak{M}_{\bn_1}(\tQ) \times \mathfrak{M}_{\bn_2}(\tQ)$ which forgets $\rho$ is smooth, while the morphism $\pi_2: \mathfrak{M}_{\bn_1,\bn_2}(\tQ) \rightarrow \mathfrak{M}_{\bn_1+\bn_2}(\tQ)$ which forgets $\rho_1,\rho_2$ is proper. The functorial properties of matrix factorizations allow one to define the pullback $\pi_1^*$ and the pushforward $\pi_{2*}$, and the corresponding composition $\pi_{2*} \circ \pi_1^*$ gives the required monoidal structure on \eqref{eqn:catha}.

\medskip 

\noindent In the present paper, we will consider the loop-nilpotent version of the above $K$-theoretic Hall algebra, defined as follows
\[\khan := K^T_0(\mathrm{MF}^{\mathrm{gr}}(\mathfrak{M}_{\bn}(\tilde{Q}), \Tr(\tilde{W}))_{\nilpstack_{\bn}})\] 
and
\begin{equation}
\label{eqn:kha nilp}
\kha = \bigoplus_{\bn \in \nn} \khan 
\end{equation}
The monoidal structure defined in the previous paragraph induces a monoidal structure
\begin{equation}
\label{eqn:catha nilp}
\mathrm{MF}^{\text{gr}}(\mathfrak{M}(\tilde{Q}), \Tr(\tilde{W}))_{\nilpstack}:= \bigoplus_{\bn \in \mathbb{N}^I} \mathrm{MF}^{\text{gr}}(\mathfrak{M}_{\bn}(\tilde{Q}), \Tr(\tilde{W}))_{\nilpstack_{\bn}} 
\end{equation}
on account of the fact that
$$
\pi_2 \ \text{maps} \ \pi_1^{-1} \left(\nilpstack_{\bn_1} \times \nilpstack_{\bn_2} \right) \ \text{properly onto} \ \nilpstack_{\bn_1+\bn_2}
$$
Thus by \cite[Section 3.5,3.6]{Efimov_2015} (see also \cite[Section 2.2.2]{VV2}), it follows that \eqref{eqn:catha nilp} inherits a monoidal structure from \eqref{eqn:k-ha appendix}, whose image in the Grothendieck group makes \eqref{eqn:kha nilp} an algebra. The associativity of this monoidal structure follows by same argument as in \cite[Theorem 3.3]{Padurariu}. To summarize, we have the following. 

\medskip 

\begin{proposition}
$\ekha$ is an associative algebra, called the loop-nilpotent $K$-HA.
\end{proposition}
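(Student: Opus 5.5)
The statement to prove is that $\kha$ is an associative algebra. The plan is to obtain both the multiplication and its associativity from the categorical Hall product on \eqref{eqn:catha}, restricted to the supported subcategories \eqref{eqn:subcategory}, and then pass to Grothendieck groups.

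\textbf{Step 1: the product preserves the support conditions.} For $\bn_1,\bn_2 \in \nn$, the Hall functor is $\pi_{2*}\circ\pi_1^*$, built from the correspondence $\mathfrak{M}_{\bn_1}(\tQ)\times\mathfrak{M}_{\bn_2}(\tQ) \leftarrow \mathfrak{M}_{\bn_1,\bn_2}(\tQ) \rightarrow \mathfrak{M}_{\bn_1+\bn_2}(\tQ)$. Take objects supported on $\nilpstack_{\bn_1}$ and $\nilpstack_{\bn_2}$; their exterior product is supported on $\nilpstack_{\bn_1}\times\nilpstack_{\bn_2}$.
\begin{itemize}
\item Pulling back along $\pi_1$ gives an object supported on $\pi_1^{-1}(\nilpstack_{\bn_1}\times\nilpstack_{\bn_2})$.
\item An extension of two representations whose loops $\omega_i$ act nilpotently again has nilpotent $\omega_i$, because in a compatible basis $\omega_i$ is block upper triangular with nilpotent diagonal blocks. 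Hence $\pi_2$ maps this locus into $\nilpstack_{\bn_1+\bn_2}$.
\item Since $\pi_2$ is proper, pushforward preserves set-theoretic support in the sense that $\mathrm{supp}(\pi_{2*}\mathcal{F}) \subset \pi_2(\mathrm{supp}\,\mathcal{F})$.
\end{itemize}
So the result restricts to zero on the complement of $\nilpstack_{\bn_1+\bn_2}$, i.e. it lies in the kernel subcategory \eqref{eqn:subcategory}. One also checks that $\pi_1^*$ and $\pi_{2*}$ are compatible with the potential $\Tr(\tW)$ (additivity of the trace on extensions) and with the grading, exactly as in the unsupported case. This is the content of the general formalism of \cite[Sections 3.5, 3.6]{Efimov_2015} and \cite[Section 2.2.2]{VV2}, which I would invoke here.

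\textbf{Step 2: associativity.} For three dimension vectors, both bracketings are computed through the stack of two-step filtrations $\mathfrak{M}_{\bn_1,\bn_2,\bn_3}(\tQ)$. The needed input is a base change isomorphism for the Cartesian squares relating $\mathfrak{M}_{\bn_1,\bn_2,\bn_3}$ to the two-step stacks, plus compatibility of smooth pullback and proper pushforward in matrix factorization categories. These are the ingredients of \cite[Theorem 3.3]{Padurariu}. Since the supported subcategories are full subcategories stable under all functors involved (by Step 1, applied iteratively to filtrations), the associativity isomorphism restricts to them. It then descends to $K^T_0$, giving an associative product on $\kha$. The unit is the class of the structure object on $\mathfrak{M}_{\b0}$, a point, which is trivially supported.

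\textbf{Main obstacle.} The delicate point is the rigorous claim that $\pi_{2*}$ and $\pi_1^*$ send kernel subcategories to kernel subcategories at the level of (graded) matrix factorizations, rather than just coherent sheaves. I would handle it by base change to open complements. Restriction to $\mathfrak{M}_{\bn_1+\bn_2}\setminus\nilpstack_{\bn_1+\bn_2}$ commutes with $\pi_{2*}$ via flat base change along the open immersion. The preimage of that open set in $\mathfrak{M}_{\bn_1,\bn_2}$ avoids $\pi_1^{-1}(\nilpstack_{\bn_1}\times\nilpstack_{\bn_2})$, so the pulled-back object restricts to zero there. This reduces everything to standard functoriality of $\mathrm{MF}$ under open restriction, with no new geometric input.
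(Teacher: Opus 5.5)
Your proposal is correct and follows essentially the same route as the paper. The paper also observes that $\pi_2$ carries $\pi_1^{-1}(\nilpstack_{\bn_1}\times\nilpstack_{\bn_2})$ properly into $\nilpstack_{\bn_1+\bn_2}$, restricts the monoidal structure to the supported subcategories via \cite{Efimov_2015} and \cite{VV2}, and gets associativity by the argument of \cite[Theorem 3.3]{Padurariu}; your block-triangular nilpotency check and open-complement base change argument just spell out what the paper leaves to those citations.
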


\medskip

\begin{proposition}
The map 
\begin{equation} 
\label{eqn:morphismtofullKHA}
\ekha \rightarrow K^T_{\tilde{Q},\tilde{W}} 
\end{equation} 
induced by \eqref{eqn:subcategory} is an algebra homomorphism.

\end{proposition}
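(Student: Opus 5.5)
The plan is to realize the map \eqref{eqn:morphismtofullKHA} as the map on Grothendieck groups induced by the inclusion of the full subcategory \eqref{eqn:subcategory}. The statement then reduces to showing that this inclusion functor is monoidal, up to natural isomorphism, for the monoidal structures on \eqref{eqn:catha nilp} and \eqref{eqn:catha}. Taking $K_0$ of a monoidal functor gives a ring homomorphism, compatible with the $\ring = K^T\point$-module structures since everything is $T$-equivariant.

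First I recall how the monoidal structure on \eqref{eqn:catha nilp} was constructed. For objects $\mathcal{F}_1, \mathcal{F}_2$ supported on $\nilpstack_{\bn_1}, \nilpstack_{\bn_2}$:
\begin{itemize}
\item the external product $\mathcal{F}_1 \boxtimes \mathcal{F}_2$ is supported on $\nilpstack_{\bn_1} \times \nilpstack_{\bn_2}$;
\item the smooth pullback $\pi_1^*$ is supported on $\pi_1^{-1}(\nilpstack_{\bn_1} \times \nilpstack_{\bn_2})$;
\item the proper pushforward $\pi_{2*}$ is supported on the image, which lies in $\nilpstack_{\bn_1+\bn_2}$. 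Indeed, an extension of nilpotent endomorphisms is nilpotent: in a basis adapted to $0 \to \rho_1 \to \rho \to \rho_2 \to 0$, each $\omega_i$ is block upper triangular with nilpotent diagonal blocks.
\end{itemize}
So the functor $\pi_{2*}\pi_1^*(-\boxtimes-)$ on \eqref{eqn:catha} preserves the supported subcategories. The key point is that the monoidal structure on the subcategory, as in \cite{Efimov_2015, VV2}, is by definition the restriction of the ambient one. It is not a separately defined functor: the inclusion $\mathrm{MF}^{\mathrm{gr}}(\cdots)_{\nilpstack} \hookrightarrow \mathrm{MF}^{\mathrm{gr}}(\cdots)$ commutes with $\boxtimes$, with smooth pullback, and with proper pushforward.

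The main obstacle is this last compatibility. Supported categories may be defined either as a kernel of restriction, as in \eqref{eqn:subcategory}, or via local cohomology. One must check that $\pi_{2*}$ applied to an object supported on $\pi_1^{-1}(\nilpstack_{\bn_1}\times\nilpstack_{\bn_2})$ agrees with the pushforward taken in the supported category. I would handle it by base change. Restrict to the open complement $U = \mathfrak{M}_{\bn_1+\bn_2}(\tQ)\setminus \nilpstack_{\bn_1+\bn_2}$. Proper base change gives $\pi_{2*}(\mathcal{G})|_U = \pi_{2*}(\mathcal{G}|_{\pi_2^{-1}(U)})$. This vanishes because $\pi_2^{-1}(U)$ is disjoint from the support of $\mathcal{G}$: if the middle term $\rho$ of a short exact sequence is not $\omega$-nilpotent, then $\rho_1$ or $\rho_2$ is not, by the triangularity argument above. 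Both matrix factorization functors satisfy base change for the smooth and proper maps involved, as in \cite{padurariu2021categoricalktheoretichallalgebras}.

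Combining these, the square relating the two products commutes up to natural isomorphism. Passing to $K^T_0$ gives
$$\iota(x)*\iota(y)=\iota(x*y)$$
for the map $\iota$ in \eqref{eqn:morphismtofullKHA}. The unit in degree $\b0$ is clearly preserved, since $\nilpstack_{\b0}=\mathfrak{M}_{\b0}(\tQ)$ is a point.
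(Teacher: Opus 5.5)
Your proposal is correct and follows the paper's proof: the map is $K_0$ of the inclusion of the supported subcategory, and this inclusion is monoidal because the product on the $\nilpstack$-supported category is the restriction of the ambient $\pi_{2*}\pi_1^*$. Your additional check that $\pi_{2*}\pi_1^*$ preserves $\nilpstack$-support (base change to the open complement, plus the fact that extensions of nilpotent operators are nilpotent) is something the paper does not redo inside the proof; it takes it from the paragraph preceding the proposition, citing \cite{Efimov_2015, VV2}.
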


\medskip

\begin{proof} It suffices to show that the inclusion \[ \mathrm{MF}^{\mathrm{gr}}(\mathfrak{M}_{\bn}(\tilde{Q}), \Tr(\tilde{W}))_{\nilpstack_{\bn}} \subset \mathrm{MF}^{\mathrm{gr}}(\mathfrak{M}_{\bn}(\tilde{Q}), \Tr(\tilde{W})) \] respects the monoidal structure. However, the functors $\pi_1^*,\pi_{2*}$ involving the category $\mathrm{MF}^{\mathrm{gr}}(\mathfrak{M}_{\bn}(\tilde{Q}), \Tr(\tilde{W}))_{\nilpstack_{\bn}}$ are defined by restriction from $\mathrm{MF}^{\mathrm{gr}}(\mathfrak{M}_{\bn}(\tilde{Q}), \Tr(\tilde{W}))$, and so they commute with the inclusion functor.  
\end{proof}
\begin{proposition} \label{Kdimensionreduction1}
The following diagram commutes \[\begin{tikzcd}
\ekhan & {K^{T}_{\tilde{Q},\tilde{W},\bn}} \\
{G^T_0(Z^{\omega\emph{-nilp}}_{Q^+,\bn}/\GL_{\bn}}) & {G^T_0(Z_{Q^+,\bn}/\GL_{\bn})}
    \arrow[from=1-1, to=1-2]
	\arrow["\cong",from=1-1, to=2-1]
	\arrow["\cong", from=1-2, to=2-2]
	\arrow[from=2-1, to=2-2]
\end{tikzcd}\]
where the vertical maps are given by dimension reduction and the bottom horizontal row is given by direct image. 
\end{proposition}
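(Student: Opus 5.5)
The plan is to reduce the commutativity of the square to a functoriality statement for Isik's equivalence \eqref{eqn:dimensional reduction cat} with respect to supports. Write $X = \mathrm{Rep}_{\bn}(Q^+)$ and $G = \GL_{\bn}$. Let $E$ be the trivial bundle with fiber $\bigoplus_{\alpha: i \rightarrow j} \mathrm{Hom}(\BC^{n_i},\BC^{n_j})$, and let $s = (\alpha\omega_i - \omega_j\alpha)_\alpha$. Then $\mathrm{Tot}_X(E^\vee) = \mathrm{Rep}_{\bn}(\tQ)$, the coordinates $\dalpha$ pair with the fibers, and $f = \Tr(\tW)$. Isik's equivalence, together with Toda's refinement quoted in Subsection \ref{sub:dimensional reduction}, then identifies two categories: the full subcategory $\mathrm{MF}^{\mathrm{gr}}(\ldots)_{\nilpstack_{\bn}}$ with $\mathcal{C}_{\nilpstack_{\bn}}$, and the ambient category with $\Db(\Coh(s^{-1}(0)^{\mathrm{der}}/G))$. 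By construction the first identification is the restriction of the second. The key point is that Toda's statement (\cite[Proposition 2.3.9]{toda2021categoricaldonaldsonthomastheorylocal}) is obtained by restricting the same functor to the subcategory cut out by support conditions. So we get a strictly commuting square of categories:
$$\mathrm{MF}^{\mathrm{gr}}(\mathfrak{M}_{\bn}(\tQ),\Tr(\tW))_{\nilpstack_{\bn}} \hookrightarrow \mathrm{MF}^{\mathrm{gr}}(\mathfrak{M}_{\bn}(\tQ),\Tr(\tW)), \qquad \mathcal{C}_{\nilpstack_{\bn}} \hookrightarrow \Db(\Coh(Z^{\mathrm{der}}_{Q^+,\bn}/G)).$$
Applying $K_0^T$ gives commutativity with the left vertical map replaced by $K_0^T(\mathcal{C}_{\nilpstack_{\bn}})$.

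Second, I identify the bottom arrow. By \cite[Example 2.13]{toda2021categoricaldonaldsonthomastheorylocal}, $\mathcal{C}_{\nilpstack_{\bn}}$ consists of complexes on $Z^{\mathrm{der}}_{Q^+,\bn}/G$ whose cohomology sheaves are set-theoretically supported on the closed substack $Z^{\omega\text{-nilp}}_{Q^+,\bn}/G$. Here the singular support condition reduces to a support condition on the base, because $\nilpstack_{\bn}$ is the preimage of the nilpotent locus under the projection forgetting the $\dalpha$'s. For such a category, $K_0$ is identified with $G_0^T$ of the reduced closed substack by dévissage: every object has a finite filtration whose subquotients are pushed forward from $Z^{\omega\text{-nilp}}_{Q^+,\bn}/G$. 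Under this identification, the inclusion into $\Db(\Coh(Z^{\mathrm{der}}/G))$ induces exactly the proper pushforward $i_*$ along the closed embedding. Finally, \cite[Corollary 3.4]{Khan_2022} identifies the derived and classical $G$-theory, and this identification is compatible with $i_*$ because it is itself induced by pushforward along the classical truncation, which commutes with closed embeddings.

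The main obstacle is the first step. One has to check that the equivalence used for the nilpotent version \eqref{eqn:dim red nilp} really is the restriction of the one used for \eqref{eqn:dim red full}, and not merely some abstract equivalence. The $T$-equivariant and graded refinements must also be handled consistently. I would try to settle this by unwinding Isik's functor, which is a Koszul-type kernel functor $\mathcal{F} \mapsto \mathcal{F} \otimes \mathcal{K}$ pushed to $\mathrm{Tot}(E^\vee)$. One then verifies directly that it sends sheaves supported over the nilpotent locus of $X$ to factorizations supported on $\nilpstack_{\bn}$, and conversely. Since the kernel is $T$-equivariant, the equivariant version follows verbatim.
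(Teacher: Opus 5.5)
Your proposal is correct and is essentially the paper's argument: the paper also invokes \cite[Proposition 2.3.9]{toda2021categoricaldonaldsonthomastheorylocal} to get the commuting square of categories, in which the support-restricted equivalence is the restriction of the full dimensional reduction equivalence, and then passes to Grothendieck groups. Your extra steps only make explicit what the paper leaves to the discussion before the proposition: identifying the induced bottom map with direct image via d\'evissage and \cite[Corollary 3.4]{Khan_2022}, and noting that the support condition is pulled back from $\mathrm{Rep}_{\bn}(Q^+)$, so it can be checked directly on Isik's functor.
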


\medskip

\begin{proof}
By \cite[Proposition 2.39]{toda2021categoricaldonaldsonthomastheorylocal} the following diagram commutes \[\begin{tikzcd}[cramped]
	{\mathrm{MF}^{\mathrm{gr}}(\mathfrak{M}_{\bn}(\tilde{Q}), \Tr(\tilde{W}))_{\nilpstack_{\bn}}} & {\mathrm{MF}^{\mathrm{gr}}(\mathfrak{M}_{\bn}(\tilde{Q}), \Tr(\tilde{W}))} \\
	{\mathcal{C}_{\nilpstack_{\bn}}} & {\Db(\Coh(Z^{\text{der}}_{Q^+,\bn}/\GL_{\bn}))}
	\arrow[hook, from=1-1, to=1-2]
	\arrow[from=1-1, to=2-1]
	\arrow[from=1-2, to=2-2]
	\arrow[hook, from=2-1, to=2-2]
\end{tikzcd}\]
Taking their Grothendieck group implies the claim.

\end{proof}

\noindent We furthermore have an algebra homomorphism
\begin{equation} \label{eqn:Kshuffle morphism}
    {K}^T_{\tilde{Q},\tilde{W}} \rightarrow K^T_{\tilde{Q}}
\end{equation}
by \cite[Proposition 3.6]{padurariu2021categoricalktheoretichallalgebras}, where the right-hand side is defined by replacing the potential $\tW$ by 0 in the definition of the $K$-HA. Similar to the previous result, we have the following.

\medskip 

\begin{proposition} \label{Kdimensionreduction2}
    The following diagram commutes \[\begin{tikzcd}
	{K^T_{\tilde{Q},\tilde{W},\bn} } & {K^T_{\tilde{Q},\bn} } \\
	{G^T_0({Z_{Q^+,\bn}/\GL_{\bn}})} & {G^T_0(\mathrm{Rep}_{\bn}(Q^+)/\GL_{\bn})}
	\arrow[from=1-1, to=1-2]
	\arrow["\cong",from=1-1, to=2-1]
	\arrow["\cong",from=1-2, to=2-2]
	\arrow[from=2-1, to=2-2]
\end{tikzcd}\]
where the vertical maps are given by dimension reduction and the bottom horizontal row is given by direct image.
\end{proposition}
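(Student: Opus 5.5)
We reduce the statement to the categorical level, exactly as in the proof of Proposition \ref{Kdimensionreduction1}. The top horizontal arrow is the map \eqref{eqn:Kshuffle morphism} of \cite[Proposition 3.6]{padurariu2021categoricalktheoretichallalgebras}. On the level of categories it is induced by the functor
$$
\mathrm{MF}^{\mathrm{gr}}(\mathfrak{M}_{\bn}(\tilde{Q}), \Tr(\tilde{W})) \rightarrow \mathrm{MF}^{\mathrm{gr}}(\mathfrak{M}_{\bn}(\tilde{Q}), 0) \simeq \Db(\Coh(\mathfrak{M}_{\bn}(\tilde{Q}))),
$$
which forgets the potential (equivalently, pushes forward along the inclusion of the zero locus of $\Tr(\tW)$). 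The first step is to write $\mathfrak{M}_{\bn}(\tQ)$ as $\mathrm{Tot}_X(E^\vee)/G$. Here $X = \mathrm{Rep}_{\bn}(Q^+)$, $G = \GL_{\bn}$, and $E$ is the trivial bundle with fiber $\bigoplus_{\alpha: i \rightarrow j} \mathrm{Hom}(\BC^{n_i},\BC^{n_j})$. The section is $s = (\alpha\omega_i - \omega_j\alpha)_\alpha$, and the fiber coordinate is given by the arrows $\dalpha$, so that $f = \Tr(\tW)$. With this setup, $s^{-1}(0) = Z_{Q^+,\bn}$.

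Next we compare the two sides of Isik's equivalence \eqref{eqn:dimensional reduction cat} for the pair $(s,f)$ and for the pair $(0,0)$. The right vertical map is the case of the zero section and zero potential. There, $\mathrm{MF}^{\mathrm{gr}}(\mathrm{Tot}_X(E^\vee)/G,0)$ is identified with $\Db(\Coh(\mathrm{Tot}_X(E^\vee)/G))$. We then reduce this to $G_0(X/G) = G^T_0(\mathrm{Rep}_{\bn}(Q^+)/\GL_{\bn})$ by the Thom isomorphism, i.e.\ pullback along the vector bundle projection $\mathrm{Tot}_X(E^\vee)\to X$. This is an isomorphism on $G$-theory.

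The core claim is that under Isik's equivalence the forgetful functor above corresponds to the pushforward $\Db(\Coh(s^{-1}(0)^{\text{der}}/G)) \rightarrow \Db(\Coh(X/G))$ along the closed immersion. The intended route is to invoke the functoriality statement of \cite{toda2021categoricaldonaldsonthomastheorylocal}, the same source used for Proposition \ref{Kdimensionreduction1}. We expect the relevant compatibility to appear there alongside Proposition 2.3.9, where Koszul duality is shown to be compatible with the support-forgetting and potential-forgetting functors. If such a statement is not available verbatim, we verify it by hand. Isik's functor is given by tensoring with the Koszul complex $\mathcal{O}_{\mathrm{Tot}(E^\vee)} \otimes \Lambda^\bullet E^\vee$ twisted by $s$. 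Setting the potential to zero degenerates this to the Koszul resolution of $\mathcal{O}_{s^{-1}(0)^{\text{der}}}$ over $\mathcal{O}_X$. After applying $\pi^*$, with $\pi : \mathrm{Tot}_X(E^\vee) \rightarrow X$, this matches the pushforward from the derived zero locus up to the Thom identification. We compare the two compositions on the generators $\pi^*(V\otimes\mathcal{O}_X)$, for $V$ a representation of $T\times G$. Such objects generate the $K$-theory because $X$ is affine and $G$ is reductive, so it suffices to check agreement of classes on them.

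Finally, we pass to Grothendieck groups. By \cite[Corollary 3.4]{Khan_2022}, the class of the pushforward from the derived zero locus equals the class of the pushforward from the classical scheme $Z_{Q^+,\bn}$. This gives commutativity of the diagram.

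We expect the main obstacle to be the precise matching of the top horizontal arrow of \cite[Proposition 3.6]{padurariu2021categoricalktheoretichallalgebras} with the Koszul-duality picture, including the grading and $T$-weight conventions. The $\BC^*$ of weight $2$ acting on the fibers of $E$ has to be compatible with the $T$-characters of the $\dalpha$. If the identification of the potential-forgetting map with pushforward is only established up to a Thom twist or a sign, we would absorb that twist into the normalization of the right vertical isomorphism.
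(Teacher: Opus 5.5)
Your primary route is the paper's own: its entire proof is the citation \cite[Lemma 2.4.5]{toda2021categoricaldonaldsonthomastheorylocal}. That lemma supplies exactly the compatibility you are looking for, between Koszul duality and the passage to zero potential on one side and direct image along $Z_{Q^+,\bn}\hookrightarrow\mathrm{Rep}_{\bn}(Q^+)$ on the other. You will not find it next to Proposition 2.3.9, which is the support statement behind Proposition \ref{Kdimensionreduction1}. Your choice of $X$, $E$, $s$ and fibre coordinates $\dalpha$ with $f=\Tr(\tW)$ is correct, and so is the final appeal to \cite[Corollary 3.4]{Khan_2022}.

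Your fallback verification ``by hand'' does not prove the statement, however. To show that the two compositions agree, you must test them on a spanning set of the source. Via the left vertical isomorphism, the source is $G^T_0(Z_{Q^+,\bn}/\GL_{\bn})$.

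\emph{The objects you test on sit in the wrong corner.} The objects $\pi^*(V\otimes\mathcal{O}_X)$ belong to the zero-potential category, so they span the $K$-theory of the top-right corner. The two compositions cannot even be evaluated on them.

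\emph{The natural test objects in the source do not span it.} These are $V\otimes\mathcal{O}_{Z^{\mathrm{der}}}$, and their Koszul duals are the only objects your Koszul-complex computation actually handles. Their classes do not span $G^T_0(Z_{Q^+,\bn}/\GL_{\bn})$, because $Z_{Q^+,\bn}$ is singular and reducible. A counterexample already appears for $Q=(1\rightarrow 2)$ and $\bn=(1,1)$:
\begin{itemize}
\item here $Z_{Q^+,\bn}=\{\alpha(\omega_1-\omega_2)=0\}$ is a union of two hyperplanes, and $s$ is regular, so $Z^{\mathrm{der}}=Z$;
\item $[\mathcal{O}_{\{\omega_1=\omega_2\}}]$ is not in the $K^{T\times\GL_{\bn}}\point$-span of $[\mathcal{O}_{Z_{Q^+,\bn}}]$;
\item indeed, its direct image to $\mathrm{Rep}_{\bn}(Q^+)$ is the nonzero constant $1-q^{\pm 1}\in\ring$, whereas everything in that span pushes forward to a multiple of $1-\chi$, where $\chi$ is a character involving $z_2/z_1$.
\end{itemize}
What is needed is therefore a comparison that is natural in every $\mathcal{F}\in\Db(\Coh(Z^{\mathrm{der}}_{Q^+,\bn}/\GL_{\bn}))$, and that is exactly what Toda's lemma provides.

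Finally, you cannot ``absorb'' a possible twist or sign into the normalization of the right vertical map. That map is fixed as dimensional reduction. It is the identification \eqref{eqn:identify app} used later to match $K^T_{\tQ}$ with $\CV^+$ and in Lemma \ref{lem:one}.
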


\medskip

\begin{proof}
    This follows from \cite[Lemma 2.4.5]{toda2021categoricaldonaldsonthomastheorylocal}. 
\end{proof}

\noindent Since $\mathrm{Rep}_{\bn}(Q^{+})$ is an affine space, we shall henceforth identify 
\begin{equation}
\label{eqn:identify app}
G^T_{0}(\mathrm{Rep}_{\bn}(Q^{+})) \cong K^T_{\tilde{Q},\bn} \cong \CV^+_{\bn}
\end{equation}
as $K^T\point$ modules. It is proved in \cite[Proposition 3.4]{padurariu2023categoricalktheoreticdonaldsonthomastheory} that the $K$-theoretic Hall algebra structure coincides with the shuffle algebra structure on $\CV^+$. Let 
\begin{equation}
\label{eqn:iota appendix}
\iota \colon \kha \rightarrow K^T_{\tilde{Q}} 
\end{equation}
be the composition of the algebra homomorphisms \eqref{eqn:morphismtofullKHA} and \eqref{eqn:Kshuffle morphism}. We will write 
\begin{equation}
\label{eqn:iota n appendix}
\iota_{\bn}: \khan \rightarrow K^T_{\tilde{Q},\bn}
\end{equation}
for the restriction of \eqref{eqn:iota appendix} to the $\bn$-th graded summand, for all $\bn \in \nn$.  

\medskip 

\begin{lemma}
\label{lem:one}
The image of the map $\iota_{\bn}$ is contained in $\emph{Im }\jmath_{\bn,*}$, where
\begin{equation}
\label{eqn:jmath}
\jmath_{\bn} : Z^{\omega\emph{-nilp}}_{Q^+,\bn} \hookrightarrow \mathrm{Rep}_{\bn}(Q^{+})
\end{equation}
is the composition of the closed embeddings \eqref{eqn:z subscheme} and \eqref{eqn:z nilp subscheme}. We are implicitly using \eqref{eqn:identify app} to identify the codomains of $\iota_{\bn}$ and $\jmath_{\bn,*}$.

\end{lemma}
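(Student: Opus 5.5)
The approach is to factor $\iota_{\bn}$ through $G$-theory using the commutative diagrams of Propositions \ref{Kdimensionreduction1} and \ref{Kdimensionreduction2}. By definition, $\iota_{\bn}$ is the composition of the map $\khan \rightarrow K^T_{\tQ,\tW,\bn}$ induced by \eqref{eqn:subcategory} with the map \eqref{eqn:Kshuffle morphism} restricted to degree $\bn$. We first apply Proposition \ref{Kdimensionreduction1}. It identifies the first map, via the dimensional reduction isomorphisms, with the direct image
$$
G^T_0(Z^{\omega\text{-nilp}}_{Q^+,\bn}/\GL_{\bn}) \rightarrow G^T_0(Z_{Q^+,\bn}/\GL_{\bn})
$$
along the closed embedding \eqref{eqn:z nilp subscheme}. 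Next we apply Proposition \ref{Kdimensionreduction2}. It identifies the second map with the direct image
$$
G^T_0(Z_{Q^+,\bn}/\GL_{\bn}) \rightarrow G^T_0(\mathrm{Rep}_{\bn}(Q^+)/\GL_{\bn})
$$
along the closed embedding \eqref{eqn:z subscheme}.

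We then stack the two squares vertically-adjacent, i.e. horizontally composed, so that they share the middle column $K^T_{\tQ,\tW,\bn} \cong G^T_0(Z_{Q^+,\bn}/\GL_{\bn})$. The outer rectangle shows that, under the identification \eqref{eqn:identify app} of $K^T_{\tQ,\bn}$ with $G^T_0(\mathrm{Rep}_{\bn}(Q^+)/\GL_{\bn})$, the map $\iota_{\bn}$ becomes the composition of the two direct images, precomposed with the left vertical isomorphism. Functoriality of proper pushforward in $G$-theory, which here is simply the composition of closed embeddings, turns this composition into $\jmath_{\bn,*}$. Consequently $\iota_{\bn} = \jmath_{\bn,*} \circ (\text{dimensional reduction isomorphism})$, and in particular $\text{Im }\iota_{\bn} \subseteq \text{Im }\jmath_{\bn,*}$. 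In fact the two images are equal, since the left vertical map is an isomorphism.

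The only delicate point is bookkeeping. We must check that the right vertical isomorphism in Proposition \ref{Kdimensionreduction1} is the same as the left vertical isomorphism in Proposition \ref{Kdimensionreduction2}. Both are the dimensional reduction isomorphism \eqref{eqn:dim red full} for $\tW$, coming from Isik's equivalence \eqref{eqn:dimensional reduction cat}. This has to hold in order for the squares to glue. We also need the identification \eqref{eqn:identify app} to be compatible with dimensional reduction for the zero potential, which is the right vertical map of Proposition \ref{Kdimensionreduction2}. We take this as the defining convention of \eqref{eqn:identify app}, so no further argument is required beyond invoking the two propositions.
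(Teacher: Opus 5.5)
Your proposal is correct and follows the same approach as the paper: the paper's proof simply notes that the lemma follows immediately from Propositions \ref{Kdimensionreduction1} and \ref{Kdimensionreduction2}. You do exactly this, gluing the two squares along the shared dimensional reduction isomorphism and using that the composite of the two closed-embedding pushforwards is $\jmath_{\bn,*}$.
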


\medskip

\begin{proof} This follows immediately from Propositions \ref{Kdimensionreduction1} and \ref{Kdimensionreduction2}.  
\end{proof}

\medskip 

\subsection{$K$-HA generators}
\label{sub:kha generators}

Let $\mathcal{O}_{\bn}^{\omega=0}$ denote the structure sheaf of the closed subscheme of $Z^{\omega\text{-nilp}}_{Q^+,\bn}$ defined by $\omega=0$, and let $[\mathcal{O}_{\bn}^{\omega=0}]$ denote its class in $G^T_0(Z^{\omega\text{-nilp}}_{Q^+,\bn})$. 

\medskip

\begin{lemma}
For any $\bn$, we have 
\begin{equation}
\label{eqn:koszul}
\jmath_{\bn,*}([\mathcal{O}_{\bn}^{\omega=0}]) = \prod_{i \in I} \prod_{1 \leq a , b \leq n_i} \left(1 - \frac {z_{ia}}{z_{ib}q}\right)
\end{equation}
\end{lemma}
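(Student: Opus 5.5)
The plan is to compute $\jmath_{\bn,*}([\mathcal{O}_{\bn}^{\omega=0}])$ directly in $G^T_0(\mathrm{Rep}_{\bn}(Q^+)/\GL_{\bn}) \cong \CV^+_{\bn}$. Composing $\jmath_{\bn}$ with the inclusion of the locus $\{\omega=0\}\subset Z^{\omega\text{-nilp}}_{Q^+,\bn}$ gives the closed embedding
$$
\{\omega_i = 0, \ \forall i \in I\} \hookrightarrow \mathrm{Rep}_{\bn}(Q^+).
$$
Here we note that when all $\omega_i=0$, the equations \eqref{eqn:equations} and the nilpotency conditions hold automatically. So the reduced structure sheaf of $\{\omega=0\}$ inside $Z^{\omega\text{-nilp}}_{Q^+,\bn}$ is just the structure sheaf of the linear subspace $L = \{\omega = 0\} \cong \mathrm{Rep}_{\bn}(Q)$ of the affine space $\mathrm{Rep}_{\bn}(Q^+)$. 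Functoriality of pushforward under compositions of closed embeddings therefore reduces the lemma to computing $[\mathcal{O}_L]$ in the equivariant $K$-theory of the affine space.

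First, $L$ is cut out by a regular sequence: the coordinates of the linear maps $\omega_i \in \mathrm{End}(\BC^{n_i})$, which are the sections of the $T\times\GL_{\bn}$-equivariant trivial bundle with fiber $\bigoplus_i \mathrm{End}(\BC^{n_i})$ twisted by the character of $\omega_i$. Second, the Koszul resolution gives
$$
[\mathcal{O}_L] = \prod_{i\in I} \wedge^\bullet\big(\mathrm{End}(\BC^{n_i})\otimes \chi_{\omega}\big)^\vee,
$$
where $\chi_\omega$ is the torus weight of $\omega$. Third, under \eqref{eqn:identify app} the pullback to a point identifies $G^T_0$ of the affine space quotient with $K^{T\times \GL_{\bn}}\point$, i.e. symmetric Laurent polynomials in the Chern roots $z_{ia}$. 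The weights of $\mathrm{End}(\BC^{n_i})$ are $z_{ia}/z_{ib}$, and combining with the $q$-weight of $\omega_i$ gives the factors $\prod_{a,b}(1 - z_{ia}/(z_{ib}q))$.

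The main obstacle is bookkeeping of conventions rather than geometry. We must fix the sign of the weight (dual versus non-dual, $q$ versus $q^{-1}$) and the normalization of the identification $G^T_0(\mathrm{Rep}_{\bn}(Q^+)/\GL_{\bn}) \cong \CV^+_{\bn}$ so that it matches the shuffle product \eqref{eqn:mult}. In particular we need the diagonal terms $a=b$ to produce $(1-q^{-1})^{n_i}$, consistent with the factor \eqref{eqn:factor} and with the zeta function \eqref{eqn:zeta}. I would check this against the case $\bn = \bs^i$, where the class should be $1-q^{-1}$ times the unit, and against the known identification in \cite{padurariu2023categoricalktheoreticdonaldsonthomastheory}. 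I would then choose the orientation of the Koszul complex that gives $z_{ia}/(z_{ib}q)$ rather than $z_{ib}q/z_{ia}$. Since the product runs over all ordered pairs $(a,b)$, the result is automatically symmetric, and either convention differs only by a monomial unit. That discrepancy is what must be pinned down to get equality on the nose.
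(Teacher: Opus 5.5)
Your proposal is correct and is essentially the paper's argument: the subscheme $\{\omega=0\}\subset Z^{\omega\text{-nilp}}_{Q^+,\bn}$ maps onto the linear subspace of the same name in the affine space $\mathrm{Rep}_{\bn}(Q^+)$, and its pushforward is the Koszul class, i.e. $\wedge^\bullet$ of the conormal bundle. The convention issue you flag resolves itself: with $\omega_i$ of weight $q$, your expression $\wedge^\bullet(\mathrm{End}(\BC^{n_i})\otimes q)^\vee$ already equals $\prod_{a,b}\bigl(1-\frac{z_{ib}}{z_{ia}q}\bigr)=\prod_{a,b}\bigl(1-\frac{z_{ia}}{z_{ib}q}\bigr)$ on the nose after relabeling $a\leftrightarrow b$, so no further choice of orientation is needed.
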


\medskip

\begin{proof} Under the inclusion $Z^{\omega\text{-nilp}}_{Q^+,\bn} \hookrightarrow \text{Rep}_{\bn}(Q^+)$, the closed subscheme $\{\omega = 0\}$ is mapped to the subscheme of the same name of the affine space $\text{Rep}_{\bn}(Q^+)$. Thus,
$$
\jmath_{\bn,*}([\mathcal{O}_{\bn}^{\omega=0}])
$$
coincides with the exterior algebra of the dual normal bundle to the regular subscheme $\{\omega = 0\} \subset \text{Rep}_{\bn}(Q^+)$. The latter is given by the $T$-characters
$$
\left\{\frac {z_{ib}q}{z_{ia}} \right\}_{i \in I, 1\leq a,b \leq n_i}
$$
and so we obtain precisely the right-hand side of \eqref{eqn:koszul}.

\end{proof}

\noindent Since $\jmath_{\bn,*}$ is a $K^{T \times \mathrm{GL}_{\bn}}\point$ module homomorphism, we have for all $g \in K^{T \times \mathrm{GL}_{\bn}}\point$
\begin{equation}
\label{eqn:varepsilon}
\jmath_{\bn,*} \left(\epsilon_{\bn,g} \right) = e_{\bn,g}. \end{equation}
where we write $\epsilon_{\bn,g} = g \otimes [\mathcal{O}_{\bn}^{\omega=0}]$ and $e_{\bn,g}$ is defined in \eqref{eqn:special}. If we let 
$$
\varepsilon_{\bn,g} \in \kha 
$$
be the preimage of $\epsilon_{\bn,g}$ under the isomorphism \eqref{eqn:dim red nilp}, then we conclude that
\begin{equation}
\label{eqn:conclude}
\iota(\varepsilon_{\bn,g}) = e_{\bn,g}
\end{equation}

\medskip 

\subsection{The stratification of $Z^{\omega\text{-nilp}}_{Q^+,\bn}$.} 

Given any $\bn = (n_i)_{i \in I}\in \nn$, recall that an $I$-partition of $\bn$ is a set of non-negative integers 
$$
n_i =n_i^{(1)}+\cdots +n_i^{(d_i)} \quad \text{such that} \quad n_i^{(1)} \geq \cdots \geq n_i^{(d_i)} >0
$$
for all $i \in I$. We will write $\blambda$ for such an $I$-partition, and recall that we also have the multiplicity notation 
$$
\blambda_i = (1^{\lambda_{i,1}}, 2^{\lambda_{i,2}}, \cdots )
$$
where $\lambda_{i,s}$ denotes the number of times $s$ appears in the $i$-th part of $\blambda$. If $\omega = ( \omega_i)_{i \in I}$ is an $I$-tuple of nilpotent operators, then it is completely determined up to conjugation by its Jordan normal form, which can be encoded by a $I$-partition $\blambda$. With this in mind, we let
\begin{equation}
\label{eqn:stratum}
Z_{\blambda} \subset Z^{\omega\text{-nilp}}_{Q^+,\bn}
\end{equation}
be the closed $T \times \GL_{\bn}$ invariant subscheme where $\omega$ has Jordan normal form $\blambda$.

\medskip 

\begin{proposition}  \label{Ktorusfreeness}

If $q : T \rightarrow \BC^*$ is surjective, then $G^{T \times \mathrm{GL}_{\bn}}_0(Z_{\blambda})$ is a free $K^T\point$ module for any $I$-partition $\blambda$. 
\end{proposition}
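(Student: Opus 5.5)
The plan is to reduce the stratum $Z_{\blambda}$, which is a single $\GL_{\bn}$-orbit type in the $\omega$-direction, to an equivariant vector-bundle-type situation over a point with a smaller symmetry group. Then I would use that the equivariant $K$-theory of a point for a connected reductive group with $T$ acting is free over $K^T\point$.

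\textbf{Step 1: slice over the Jordan type.} The locus of nilpotent $\omega=(\omega_i)_{i\in I}$ with Jordan type $\blambda$ is a single $\GL_{\bn}$-orbit $\mathcal{O}_{\blambda}=\GL_{\bn}/C_{\blambda}$. Here $C_{\blambda}=\prod_i C_{\GL_{n_i}}(\omega_i^{0})$ is the centralizer of a fixed representative $\omega^0$ in Jordan form. Projecting $Z_{\blambda}\to\mathcal{O}_{\blambda}$, the fiber over $\omega^0$ is the linear space
\[
V_{\blambda}=\{(\alpha)_{\alpha\in\edge} : \alpha\omega^0_i=\omega^0_j\alpha\},
\]
since the equations \eqref{eqn:equations} are linear in $\alpha$ once $\omega$ is fixed. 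So $Z_{\blambda}\cong \GL_{\bn}\times_{C_{\blambda}}V_{\blambda}$, viewing $Z_\blambda$ as the reduced locally closed stratum.

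The torus $T$ scales $\omega$ by the character $q$. The orbit is preserved because $q\omega^0$ is conjugate to $\omega^0$ via a cocharacter $\rho:\BC^*\to\GL_{\bn}$ acting diagonally on Jordan blocks by $q^{k}$. Twisting the $T$-action by $\rho\circ q$, the group $T\times\GL_{\bn}$ acts on $Z_{\blambda}$ with stabilizer of the fiber equal to $T\times C_{\blambda}$, acting linearly on $V_{\blambda}$. This step uses that $q$ factors through $T$; the surjectivity hypothesis ensures the twist is by a genuine subtorus and does not interfere with freeness.

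\textbf{Step 2: induction and homotopy invariance.} By induction and Thom isomorphism,
\[
G_0^{T\times\GL_{\bn}}(Z_{\blambda})\cong G_0^{T\times C_{\blambda}}(V_{\blambda})\cong K^{T\times C_{\blambda}}\point .
\]
Write $C_{\blambda}=L_{\blambda}\ltimes U_{\blambda}$, where $U_{\blambda}$ is unipotent and $L_{\blambda}=\prod_{i,s}\GL_{\lambda_{i,s}}$. Then $K^{T\times C_{\blambda}}\point\cong K^{T\times L_{\blambda}}\point$, because equivariant $K$-theory is insensitive to unipotent radicals. The twisted $T$-action commutes with $L_{\blambda}$ up to an inner automorphism absorbed by $\rho$. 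Hence this ring is $\ring\otimes\bigotimes_{i,s}\BZ[x^{\pm1}]^{\mathrm{sym}}_{\lambda_{i,s}}$, a free $\ring$-module with basis given by products of Schur functions.

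\textbf{Main obstacle.} The delicate point is that $Z_{\blambda}$ is only locally closed in general; the statement calls it closed, so one must fix which scheme structure is meant. One must also check that the twisted $T$-action really makes $Z_{\blambda}\to\mathcal{O}_{\blambda}$ a $T\times\GL_{\bn}$-equivariant fibration with linear fibers. I would handle this by working with the reduced stratum, since $G$-theory is insensitive to nilpotents. I would then write down $\rho$ explicitly on each Jordan block as $\mathrm{diag}(1,q,\dots,q^{s-1})$, and verify that $T$, twisted by $\rho$, normalizes $C_{\blambda}$ and acts linearly on $V_{\blambda}$ with weights built from $t_\alpha$ and powers of $q$.
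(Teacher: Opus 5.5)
Your proof is correct, and its first half matches the paper's. Both identify $Z_{\blambda}$ with $(T\times\GL_{\bn})\times_{H}V_{\blambda}$, where $H$ is the stabilizer of a fixed $\omega^0$ of Jordan type $\blambda$ (the paper's $H^T_{\blambda}$), and conclude that $G_0^{T\times\GL_{\bn}}(Z_{\blambda})\cong K^{H}\point$. You differ in how you show that $K^H\point$ is free over $K^T\point$.

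The paper writes $K^{H^T_{\blambda}}\point=K^{H^{\BC^*}_{\blambda}}\point\otimes_{K^{\BC^*}\point}K^T\point$. It then argues that $K^{H^{\BC^*}_{\blambda}}\point$ is torsion-free, hence free, over $K^{\BC^*}\point$, because $H^{\BC^*}_{\blambda}\to\BC^*$ is onto. You instead compute $H$ explicitly, using the cocharacter $\rho$ and the Levi decomposition of $C_{\blambda}$. This gives $K^T\point\otimes\bigotimes_{i,s}K^{\GL_{\lambda_{i,s}}}\point$ with an explicit basis. Your route is more concrete and in fact more robust. $K^{\BC^*}\point=\BZ[x^{\pm1}]$ is not a PID, and torsion-free modules over it (or over a PID, if not finitely generated) need not be free, so your explicit description is what really underlies the claim. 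It also shows that surjectivity of $q$ is never used, so your sentence about surjectivity ``ensuring the twist is by a genuine subtorus'' can be dropped. The paper's packaging, in turn, is set up for reuse in the next proposition via $T'\times A\subset H^T_{\blambda}$.

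There is one imprecision in Step 1: $H$ is not $T\times C_{\blambda}$ as a group. It consists of the pairs $(t,c\,\rho(q(t))^{\pm1})$ with $c\in C_{\blambda}$, so $H\cong T\ltimes C_{\blambda}$. This is because $\rho(a)$ normalizes but does not centralize the unipotent radical $U_{\blambda}$; for example, it sends $1+\omega^0\in U_{\blambda}$ to $1+a^{\mp1}\omega^0$. For the same reason, you cannot literally twist the $T$-action by $\rho\circ q$ and keep an action of the direct product $T\times\GL_{\bn}$; just compute the stabilizer directly. Your conclusion still holds for the following reason. Take $L_{\blambda}$ acting on the multiplicity spaces of the Jordan blocks of each size. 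Then $\rho$ commutes with $L_{\blambda}$ exactly, so $H\cong(T\times L_{\blambda})\ltimes U_{\blambda}$ and $K^H\point\cong K^{T\times L_{\blambda}}\point$, as you state.

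Your remark that $Z_{\blambda}$ is only locally closed (the paper's ``closed'' is a slip) is also correct. Passing to the reduced stratum is harmless for $G$-theory.
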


\medskip 

\begin{proof} For any $I$-partition $\blambda$, we fix an $I$-tuple of loops $\omega_{\blambda} = (\omega_{\blambda_i})_{i \in I}$ of the given Jordan type. We let 
$$
H_{\blambda}^{\BC^*} = \Big\{(a,g_i)_{i \in I} \text{ s.t. } g_i \omega_{\blambda_i} g_i^{-1} = a \omega_{\blambda_i} \Big\} \subset \BC^* \times \GL_{\bn}
$$
and define $H_{\blambda}^T$ as the subgroup which completes the following commutative diagram
\[\begin{tikzcd}
{H_{\blambda}^T} & {T \times \GL_{\bn}} \\
{H_{\blambda}^{\BC^*}} & {\BC^* \times \GL_{\bn}}
\arrow[from=1-1, to=1-2]
\arrow[from=1-1, to=2-1]
\arrow[ from=1-2, to=2-2]
\arrow[from=2-1, to=2-2]
\end{tikzcd}\]
where the vertical arrow corresponds to the surjective homomorphism $q : T \rightarrow \BC^*$. We have the following isomorphism of $T \times \GL_{\bn}$ schemes
\begin{equation}
\label{eqn:stacks}
Z_{\blambda} \cong (T \times \GL_{\bn}) \underset{H_{\blambda}^T}{\times} \prod_{\alpha: i \rightarrow j} \prod_{n,n^{\prime} \geq 1} \mathrm{Hom}_{\BC[t]\text{-mod}}(\mathbb{C}[t]/t^n, \mathbb{C}[t]/t^{n^{\prime}})^{\lambda_{s(\alpha),n}\lambda_{t(\alpha),n^{\prime}}}
\end{equation}
which is due to the fact that the fiber of $Z_{\blambda}$ over a given $\omega$ of Jordan type $\blambda$ is an affine space (with the direct product of Hom spaces in the right-hand side being precisely the aforementioned affine space over the fixed $\omega_{\blambda}$). Therefore, we have identifications
\begin{equation}
\label{eqn:display}
G^{T \times \mathrm{GL}_{\bn}}_0(Z_{\blambda}) \cong G^{H_{\blambda}^T}_0(\text{affine space}) = K^{H_{\blambda}^T}\point = K^{H_{\blambda}^{\BC^*}}\point \bigotimes_{K^{\BC^*}\point} K^T\point
\end{equation}
Since the projection $H_{\blambda}^{\BC^*} \rightarrow \BC^*$ is clearly surjective, then $K^{H_{\blambda}^{\BC^*}}\point$ is torsion free (and hence free) over the principal ideal domain $K^{\BC^*}\point$. Therefore, the right-hand side of \eqref{eqn:display} is a free $K^T\point$ module, and therefore so is the left-hand side.

\end{proof}

\begin{proposition}  \label{Ktorsionfreenessofjordanform}

Let $T' \subset T$ be the kernel of the character $q$, and let 
$$
A \cong (\BC^*)^I = Z( \GL_{\bn} )
$$
be the torus of scalar matrices. Then $G^{T \times \GL_{\bn}}_0(Z_{\blambda})$ is torsion free over 
\begin{equation}
\label{eqn:s}
S = \text{complement of the kernel of } K^{T \times \GL_{\bn}}\point \rightarrow K^{T' \times A}\point 
\end{equation}

\end{proposition}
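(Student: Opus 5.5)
We reuse the description of $Z_{\blambda}$ from the proof of Proposition \ref{Ktorusfreeness}. By \eqref{eqn:stacks} and the homotopy invariance used in \eqref{eqn:display}, we have
$$
G^{T \times \GL_{\bn}}_0(Z_{\blambda}) \cong K^{H^T_{\blambda}}\point ,
$$
where $H^T_{\blambda} \subset T \times \GL_{\bn}$ is the stabilizer of $\omega_{\blambda}$ up to the $q$-twist. This isomorphism is compatible with the module structure over $K^{T \times \GL_{\bn}}\point$ via restriction along $H^T_{\blambda} \hookrightarrow T \times \GL_{\bn}$. The statement therefore reduces to a purely group-theoretic one: $K^{H^T_{\blambda}}\point$ is torsion free over the multiplicative set $S$. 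Note that no surjectivity hypothesis on $q$ is imposed here, so we cannot simply appeal to Proposition \ref{Ktorusfreeness}.

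The key observation is that $T' \times A \subset H^T_{\blambda}$. Indeed, scalar matrices commute with $\omega_{\blambda}$. Also, elements of $T'$ have $q = 1$, so they act on the loops trivially and stabilize $\omega_{\blambda}$ exactly. Since $T'\times A$ is central in $T\times\GL_{\bn}$, we get a factorization
$$
K^{T \times \GL_{\bn}}\point \rightarrow K^{H^T_{\blambda}}\point \rightarrow K^{T'\times A}\point .
$$
It then suffices to show the following: if $s \in S$ and $x \in K^{H^T_{\blambda}}\point$ satisfy $s x = 0$, then $x = 0$. To do this, we would write $H^T_{\blambda} = L \ltimes U$ with $U$ unipotent, so that $K^{H^T_{\blambda}}\point = K^L\point = \mathrm{Rep}(L)^{\ }$. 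Here $L$ is a reductive Levi, namely a product of $\GL_{\lambda_{i,s}}$'s together with a torus mapping onto the relevant part of $T$. The representation ring $\mathrm{Rep}(L)$ is a domain, since $L$ is connected reductive, at least after checking connectedness of $L$. Then $\mathrm{Rep}(L) \hookrightarrow \mathrm{Rep}(T_L)$, where $T_L$ is a maximal torus, and $\mathrm{Rep}(T_L)$ is a Laurent polynomial ring.

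Torsion freeness over $S$ then amounts to showing that every $s \in S$ maps to a nonzero element of $\mathrm{Rep}(L)$. This holds because the image of $s$ further restricts to the nonzero image of $s$ in $K^{T'\times A}\point$, using $T'\times A \subset L$ up to conjugation; the central torus lies in any Levi. A nonzero element of a domain is a nonzerodivisor, which finishes the argument.

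The main obstacle is the structural claim about $H^T_{\blambda}$: its reductive quotient must be connected, or at least $\mathrm{Rep}$ of it must be a domain, and it must contain $T'\times A$. When $q$ is not surjective, $H^T_{\blambda}$ is a fibre product and may be disconnected; for example, $T'$ itself may be disconnected. If $\mathrm{Rep}(L)$ fails to be a domain, I would first try to argue componentwise: $K^{H}\point$ embeds into $K^{H^0}\point \otimes$ (functions on the finite component group). Failing that, I would try to show directly that elements restricting nontrivially to the central subgroup $T' \times A$ act injectively, via the decomposition of $\mathrm{Rep}(L)$ into isotypic components for the central torus, on which $T'\times A$ acts by characters.
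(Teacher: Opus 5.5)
Your route is the paper's: identify $G^{T\times\GL_\bn}_0(Z_\blambda)$ with $K^{H^T_\blambda}\point$ via \eqref{eqn:display}, note that $T'\times A\subset H^T_\blambda$, and factor the restriction to $K^{T'\times A}\point$ through $K^{H^T_\blambda}\point$. The paper stops at the factorization and leaves the rest implicit.

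Your extra step is that $K^{H^T_\blambda}\point$ is a domain, so elements of $S$, which have nonzero image there, are nonzerodivisors. This is exactly what the factorization needs, and it is not cosmetic. For a disconnected group, an element with nonzero restriction to a central subgroup can be a zero divisor. For example, in $\mathrm{Rep}(\BC^*\times\BZ/2)=\BZ[y^{\pm1},x]/(x^2-1)$ the class $1+x$ restricts to $2$ on $\BC^*$, yet it kills $1-x$. So your fallback ideas (componentwise, or isotypic decomposition for the central torus) could not have rescued a genuinely disconnected case.

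The point you leave open, connectedness of $H^T_\blambda$, does hold, for any character $q$.
\begin{itemize}
\item Choose the cocharacter $\phi:\BC^*\to\GL_\bn$ which acts on each Jordan block of $\omega_\blambda$ by $\phi(a)e_k=a^{-k}e_k$. Here the block has basis $e_1,\dots,e_m$ with $\omega e_k=e_{k-1}$ and $e_0=0$.
\item Then $\phi(a)\omega_\blambda\phi(a)^{-1}=a\,\omega_\blambda$, so $(t,c)\mapsto(t,\phi(q(t))c)$ is an isomorphism of varieties $T\times Z_{\GL_\bn}(\omega_\blambda)\to H^T_\blambda$. In fact $H^T_\blambda\cong Z_{\GL_\bn}(\omega_\blambda)\rtimes T$ as groups.
\item The centralizer $Z_{\GL_\bn}(\omega_\blambda)$ is the unit group of the centralizer algebra of $\omega_\blambda$. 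This is a nonempty open subset of a vector space, hence connected.
\item Therefore $H^T_\blambda$ is connected, and its reductive quotient $L$ is connected: it is $T$ times $\prod_{i,s}\GL_{\lambda_{i,s}}$. Consequently $\mathrm{Rep}(L)\hookrightarrow\mathrm{Rep}(T_L)$ is a domain.
\end{itemize}

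The possible disconnectedness of the subgroup $T'$ plays no role. Also, a character of a torus is either trivial or surjective, so ``$q$ not surjective'' just means $q$ is trivial and $H^T_\blambda=T\times Z_{\GL_\bn}(\omega_\blambda)$.

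With this your argument is complete. You also do not need $T'\times A\subset L$; you only need that the image of $s$ in $K^{H^T_\blambda}\point$ is nonzero, which the factorization already gives.
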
 

\medskip

\begin{proof} Because $G^{T \times \GL_{\bn}}_0(Z_{\blambda}) \cong K^{H_{\blambda}^T}\point$ by \eqref{eqn:display}, the claim follows from the fact that the ring homomorphism that appears in the right-hand side of \eqref{eqn:s} factors as
$$
K^{T \times \GL_{\bn}}\point \rightarrow K^{H_{\blambda}^T}\point \rightarrow K^{T' \times A}\point 
$$
In turn, this is because $T' \times A$ is contained in $H_{\blambda}^T$, which can be seen from the fact that $T' = \text{Ker }q$ and conjugation by scalar matrices leave the loops $\omega$ unchanged. 

\end{proof}

\subsection{Proof of injectivity}

We will use the stratification
\begin{equation}
\label{eqn:stratification}
Z^{\omega\text{-nilp}}_{Q^+,\bn} = \bigsqcup_{\blambda \text{ is an } I\text{-partition of }\bn} Z_{\blambda}
\end{equation}
to prove Theorem \ref{thm:injective}. 

\medskip 

\begin{proposition} \label{Ktorsionfreenessmultiplicativesubset} If $q : T \rightarrow \BC^*$ is surjective, then 
$$
G^{T \times \GL_{\bn}}_0(Z^{\omega\emph{-nilp}}_{Q^+,\bn})
$$
is free over the ring $K^{T}\point$, and torsion-free over the multiplicative set $S$ of \eqref{eqn:s}. 

\end{proposition}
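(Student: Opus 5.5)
The plan is to induct along the stratification \eqref{eqn:stratification} by Jordan type, using the localization exact sequence in $G$-theory at each step. First order the $I$-partitions $\blambda$ of $\bn$ compatibly with the closure relations: the closure of $Z_{\blambda}$ is contained in the union of strata $Z_{\blambda'}$ with $\blambda'$ smaller in the dominance order, componentwise in $i \in I$, since the nilpotent orbits of each $\omega_i$ are ordered in this way. Refine this to a total order, and let $U_{\le \blambda}$ denote the union of the strata up to and including $\blambda$. Each $U_{\le\blambda}$ is then closed, and $Z_{\blambda}$ is open in it.

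For each step, take a closed subscheme $Y \subset X$ with open complement $U$, all $T\times\GL_{\bn}$-invariant. The localization sequence gives
\begin{equation*}
G^{T\times\GL_{\bn}}_1(U) \to G^{T\times\GL_{\bn}}_0(Y) \to G^{T\times\GL_{\bn}}_0(X) \to G^{T\times\GL_{\bn}}_0(U) \to 0 .
\end{equation*}
We need the left map to vanish, so that the sequence is short exact on $G_0$. The argument for this is as follows. By \eqref{eqn:stacks}, $U=Z_{\blambda}$ is a homogeneous bundle whose fibers are affine spaces, so $G_*(U) \cong K_*^{H^T_{\blambda}}\point$. The groups $H^T_{\blambda}$ are extensions of tori by unipotent groups, possibly with products of general linear groups of the multiplicity spaces as Levi factors. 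For such groups, the standard approach is to show that the boundary map vanishes by weight or cellular reasons: $Z^{\omega\text{-nilp}}$ admits an equivariant filtration by affine bundles over orbits. The cleaner alternative is to use only the $G_0$ part, which is right exact: $G_0(Y)\to G_0(X)\to G_0(U)\to 0$.

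Freeness follows from this sequence as long as the left map is injective. If it is not, one still gets a filtration of $G_0(X)$ whose graded pieces are quotients of the $G_0(Z_{\blambda})$. Quotients of free modules need not be free, however, so injectivity is genuinely needed. I would establish it using the torsion-freeness over $S$ from Proposition \ref{Ktorsionfreenessofjordanform}, combined with a localization argument. After inverting $S$, one can pass via the localization theorem to the fixed locus of $T'\times A$. More simply, one can compare ranks: after tensoring with the fraction field, the sequence becomes exact, and ranks add. The image of $G_1(U)$ in $G_0(Y)$ is then a torsion submodule over $S$, because $G_1$ of $U$ becomes zero after localization; this needs checking via the explicit form $K_1^{H}\point$. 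Since $G_0(Y)$ is $S$-torsion-free by induction, that image vanishes.

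With injectivity established, induction gives a short exact sequence $0\to G_0(U_{<\blambda})\to G_0(U_{\le\blambda})\to G_0(Z_{\blambda})\to 0$. Both outer terms are free over $K^T\point$, the left by induction and the right by Proposition \ref{Ktorusfreeness}. The sequence therefore splits, and the middle term is free. The outer terms are also $S$-torsion-free, the right by Proposition \ref{Ktorsionfreenessofjordanform}, and extensions of $S$-torsion-free modules are $S$-torsion-free, so the middle term is too. At the top step the union is all of $Z^{\omega\text{-nilp}}_{Q^+,\bn}$, which proves the claim.

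The main obstacle is the vanishing of the boundary map $G_1(Z_\blambda)\to G_0(U_{<\blambda})$. The approach above reduces this to showing that $K_1^{H^T_\blambda}\point$ becomes zero after inverting $S$, or more precisely that its image is $S$-torsion. I would first try to verify this directly from the structure of $H^T_\blambda$, which contains $T'\times A$ as a central subtorus, using Thomason's localization theorem.
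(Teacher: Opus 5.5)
Your skeleton is the paper's. You filter $Z^{\omega\text{-nilp}}_{Q^+,\bn}$ by Jordan type, aim for short exact sequences $0\to G_0(U_{<\blambda})\to G_0(U_{\le\blambda})\to G_0(Z_{\blambda})\to 0$, and then use that freeness over $K^T\point$ and $S$-torsion-freeness pass to extensions, with Propositions \ref{Ktorusfreeness} and \ref{Ktorsionfreenessofjordanform} as input. You also correctly isolate the crux: the connecting map $\partial : G_1^{T\times\GL_{\bn}}(Z_{\blambda})\to G_0^{T\times\GL_{\bn}}(U_{<\blambda})$ must vanish.

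\textbf{The gap.} Your argument for $\partial=0$ rests on a false claim, namely that $G_1(Z_{\blambda})$ becomes zero, or even $S$-torsion, after inverting $S$. Algebraic $K_1$ of a point is not zero. We have $G_1^{T\times\GL_{\bn}}(Z_{\blambda})\cong K_1^{H^T_{\blambda}}\point$, and this contains $K_0^{H^T_{\blambda}}\point\otimes_{\BZ}K_1(\BC)=K_0^{H^T_{\blambda}}\point\otimes_{\BZ}\BC^\times$. Take $u\in\BC^\times$ of infinite order. Then the class $[\mathcal{O}]\otimes u$ is killed by no $s\in S$: $s$ has nonzero image in the free abelian group $K_0^{L}\point$, where $L$ is the reductive quotient of $H^T_{\blambda}$, because its further image in $K^{T'\times A}\point$ is nonzero. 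Thomason localization does not change this either, since it only replaces $Z_{\blambda}$ by a fixed locus whose $G_1$ again contains $G_0\otimes K_1(\BC)$. So ``the image of $\partial$ is $S$-torsion'' is never established, and your injectivity step, hence freeness, remains unproved.

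\textbf{How to close it.} You need a genuine reason for $\partial=0$; either of the following works.
\begin{itemize}
\item \emph{The paper's route.} Compare with equivariant topological $K$-theory through Lemma 2.4.3 of \cite{VV}. By \eqref{eqn:stacks}, each stratum is equivariantly homotopic to the classifying space of $H^T_{\blambda}$, whose topological $K^1$ vanishes. So the topological long exact sequences break into short exact ones, and the lemma transports this successive-extension structure to algebraic $G_0$.
\item \emph{A purely algebraic route.} By d\'evissage the unipotent radical of $H^T_{\blambda}$ is invisible. The quotient $L$, a torus times general linear groups, has semisimple representation category with scalar endomorphisms. Hence $K_*^{H^T_{\blambda}}\point\cong K_0^{L}\point\otimes_{\BZ}K_*(\BC)$, so $G_1(Z_{\blambda})=G_0(Z_{\blambda})\cdot K_1(\BC)$. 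The localization sequence is $K_*(\BC)$-linear and $G_0(U_{\le\blambda})\to G_0(Z_{\blambda})$ is surjective. Therefore $G_1(U_{\le\blambda})\to G_1(Z_{\blambda})$ is surjective, and $\partial=0$ by exactness.
\end{itemize}
With either argument in place of your torsion argument, the rest of your induction goes through as written.
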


\medskip 

\begin{proof} Because of \eqref{eqn:stratification}, $G^{T \times \GL_{\bn}}_0(Z^{\omega\text{-nilp}}_{Q^+,\bn})$ is a successive extension of 
$$
\Big\{ G_0^{T \times GL_{\bn}}(Z_{\blambda}) \Big\}_{\blambda \text{ is an } I\text{-partition of }\bn}
$$
(more precisely, we apply Lemma 2.4.3 of \cite{VV} to reduce the statement above to the fact that the equivariant topological $K$-theory of $Z^{\omega\text{-nilp}}_{Q^+,\bn}$ is a successive extension of the equivariant topological $K$-theory groups of $Z_{\blambda}$; because of \eqref{eqn:stacks}, the latter has vanishing $K_1$, due to being homotopic to the classifying space of a complex algebraic group). The properties of being free over the ring $K^{T}\point$ and torsion-free over the multiplicative set $S$ are preserved under taking successive extensions of $K^{T \times \GL_{\bn}}\point$ modules, so Propositions \ref{Ktorusfreeness} and \ref{Ktorsionfreenessofjordanform} imply the required conclusion.

\end{proof}

\medskip 

\noindent Recall the geometric assumption \eqref{eqn:assumption geometric}, which implies both the fact that $q : T \rightarrow \BC^*$ is surjective, and the fact that the restriction of
$$
t_{\alpha} : T \rightarrow \BC^*
$$
to $T' = \text{Ker }q$ is non-trivial, for all $\alpha \in \Omega_{\text{loop}}$. Recall that we wish to prove Theorem \ref{thm:injective}, which states that the map
$$
\iota_{\bn}: \khan \rightarrow K^T_{\tilde{Q},\bn}
$$
is injective. To this end, we will use the following version of the Thomason localization theorem in algebraic $K$-theory (\cite{Thomason}): if a reductive group $H$ acts on a finite type separated scheme $X$, then for any subtorus $N \subseteq Z(H)$ and any $H$-invariant subscheme $Y$ that contains the fixed point locus
\begin{equation}
\label{eqn:three}
X^{N} \subseteq Y \subseteq X 
\end{equation}
the direct image map induces an isomorphism
\begin{equation}
\label{eqn:localization}
G_0^H(Y)[S^{-1}] \cong G_0^H(X)[S^{-1}]
\end{equation}
where $S$ is the complement of the kernel of the restriction $K^H\point \rightarrow K^{N}\point$. Indeed, the usual localization isomorphism is stated with the following modifications

\medskip 

\begin{itemize}[leftmargin=*]

\item $Y = X^N$; however, we have $Y^{N} = X^{N}$ due to the assumption \eqref{eqn:three}, and so both sides of \eqref{eqn:localization} are isomorphic to $G_0^H(X^N)[S^{-1}]$;

\medskip

\item $H$ is a torus; however, the $H$-equivariant $G$-theory of $X$ is identified with the Weyl group invariants of its maximal torus equivariant $G$-theory, and the fact that $N \subseteq Z(H)$ implies that the maximal torus contains $N$. Formula \eqref{eqn:localization} follows by taking Weyl invariants in the corresponding statement for the maximal torus.

\end{itemize} 

\medskip 

\begin{proof} \emph{of Theorem \ref{thm:injective}:} Recall that $\mathrm{Rep}_{\bn}(Q^+)$ is an affine space. By Propositions \ref{Kdimensionreduction1} and \ref{Kdimensionreduction2}, it suffices to prove the injectivity of the map 
\begin{equation}
\label{eqn:inj 0}
G^{T \times \mathrm{GL}_{\bn}}_0(Z^{\omega\text{-nilp}}_{Q^+,\bn}) \xrightarrow{\jmath_{\bn,*}} G^{{T \times \mathrm{GL}_{\bn}}}_0(\mathrm{Rep}_{\bn}(Q^+)) = K^{T \times \GL_{\bn}}\point
\end{equation}
induced by the closed embedding \eqref{eqn:jmath}. Because the character $q : T \rightarrow \BC^*$ is surjective, any quiver representation \eqref{eqn:representation plus} that is $T$-invariant must have the $\omega_i$'s nilpotent. Therefore, we may invoke the localization theorem \eqref{eqn:localization} for $T \subset T \times \GL_{\bn} \curvearrowright Z_{Q^+,\bn}$ to conclude that we have an isomorphism of $K^{T \times \mathrm{GL}_{\bn}}\point$ modules
\begin{equation}
\label{eqn:inj 1}
G^{T \times \mathrm{GL}_{\bn}}_0(Z^{\omega\text{-nilp}}_{Q^+,\bn}) \bigotimes_{K^T\point} \mathrm{Frac}(K^T\point) \cong G^{T \times \mathrm{GL}_{\bn}}_0(Z_{Q^+,\bn}) \bigotimes_{K^T\point} \mathrm{Frac}(K^T\point)
\end{equation}
We now apply the localization theorem to $T' \times A \subset T \times \GL_{\bn} \curvearrowright \text{Rep}_{\bn}(Q^+)$. Since any fixed point of this action must lie in the affine space $\{\alpha = 0\}_{\alpha \in \edge}$ due to the geometric assumption that $t_{\alpha}|_{T'}$ is non-trivial for any loop $\alpha$, then we have an isomorphism
\begin{equation}
\label{eqn:inj 2}
G^{T \times \mathrm{GL}_{\bn}}_0(Z_{Q^+,\bn}) [S^{-1}] \cong G^{T \times \mathrm{GL}_{\bn}}_0(\text{Rep}_{\bn}(Q^+)) [S^{-1}] = K^{T \times \mathrm{GL}_{\bn}} \point [S^{-1}]
\end{equation}
where $S$ is the complement of the kernel of $K^{T \times \GL_{\bn}}\point \rightarrow K^{T' \times A}\point$. Combining the isomorphisms \eqref{eqn:inj 1} and \eqref{eqn:inj 2} gives us an isomorphism
\begin{equation}
\label{eqn:inj 3}
G^{T \times \mathrm{GL}_{\bn}}_0(Z^{\omega\text{-nilp}}_{Q^+,\bn}) \bigotimes_{K^T\point} \mathrm{Frac}(K^T\point)[S^{-1}] \cong K^{\mathrm{GL}_{\bn}}\point \otimes \mathrm{Frac}(K^T\point)[S^{-1}]
\end{equation}
which is induced by direct image. However, Proposition \ref{Ktorsionfreenessmultiplicativesubset} implies that we have an injection of $K^{{T \times \mathrm{GL}_{\bn}}}\point$ modules
\begin{equation}
\label{eqn:inj 4}
G^{T \times \mathrm{GL}_{\bn}}_0(Z^{\omega\text{-nilp}}_{Q^+,\bn}) \hookrightarrow G^{T \times \mathrm{GL}_{\bn}}_0(Z^{\omega\text{-nilp}}_{Q^+,\bn}) \bigotimes_{K^T\point} \mathrm{Frac}(K^T\point)[S^{-1}]
\end{equation}
Combining \eqref{eqn:inj 3} and \eqref{eqn:inj 4} implies that the direct image embeds 
$$
G^{T \times \mathrm{GL}_{\bn}}_0(Z^{\omega\text{-nilp}}_{Q^+,\bn}) \hookrightarrow G^{T \times \mathrm{GL}_{\bn}}_0(\text{Rep}_{\bn}(Q^+)) = K^{\mathrm{GL}_{\bn}}\point
$$
which is exactly what we needed to prove.

\end{proof}

\subsection{Yu Zhao's trick}
\label{sub:yu}

We will prove Proposition \ref{prop:generator} by adapting the ideas of \cite{Zhao} to the situation at hand. 

\medskip 

\begin{proof} \emph{of Proposition \ref{prop:generator}:} As a consequence of Proposition \ref{thm:injective}, we have an injection 
$$
\iota : \kha \hookrightarrow \kzero \cong \CV^+
$$
so it suffices to show that the image of this injection is generated by the images of the $\varepsilon_{\bn,g}$. However, by \eqref{eqn:conclude} the latter images are the $e_{\bn,g}$'s, so we have
$$
\iota(\kha) \supseteq \bar{\CS}^+
$$
(with the notation as in \eqref{eqn:subalgebra generated by}). As shown in Theorem \ref{thm:shuffle int}, we have
$$
\bar{\CS}^+ = \CS^+
$$
and thus Proposition \ref{prop:generator} would follow once we prove the inclusion
$$
\iota(\kha) \subseteq \CS^+
$$
Since $\CS^+$ is the set of color-symmetric Laurent polynomials which satisfy the divisibility conditions of Definition \ref{def:shuffle int}, we must show that any $E \in \text{Im }\iota_{\bn}$ has the property that for any $I$-composition $P$, 
\begin{equation}
\label{eqn:specialization final}
G = E \left(x_{i1},x_{i1}q,\dots,x_{i1}q^{n_i^{(1)}-1}, \dots, x_{id_i},x_{id_i}q,\dots,x_{id_i} q^{n_i^{(d_i)}-1} \right)_{i \in I}
\end{equation}
is divisible by
\begin{align}
&\Pi_1 = \prod_{i \in I} \prod_{a=1}^{d_i} \left[ (1-q^{-1})(1-q^{-2})\dots(1-q^{-n_i^{(a)}}) \right] \\
&\Pi'_2 = \prod_{\alpha : i \rightarrow j} \mathop{\prod^{r<s}_{1 \leq a \leq d_i}}_{1 \leq b \leq d_j} \mathop{\prod_{0 \leq r < n_i^{(a)}}}_{0 \leq s < n_j^{(b)}} \left(1- \frac {x_{ia}t_{\alpha}q^r}{x_{jb}q^{s}} \right) \\
&\Pi_2'' = \prod_{\alpha : i \rightarrow j} \mathop{\prod^{r>s}_{1 \leq a \leq d_i}}_{1 \leq b \leq d_j} \mathop{\prod_{0 \leq r < n_i^{(a)}}}_{0 \leq s < n_j^{(b)}} \left(1- \frac {x_{ia}t_{\alpha}q^r}{x_{jb}q^{s+1}} \right) 
\end{align}
(note that $\Pi_2'\Pi_2'' = \Pi_2$ of \eqref{eqn:pi 2}). In what follows, we will prove that $\Pi_1\Pi_2' | G$ and $\Pi_1\Pi_2'' | G$. As an immediate consequence of (and motivation for) Assumption \soft of \eqref{eqn:assumption soft}, we would therefore conclude the required divisibility $\Pi_1\Pi_2'\Pi_2'' | G$. Therefore, let us consider the locally closed subset
$$
\upsilon : U \hookrightarrow \Big\{\underset{\omega_i}{\underset{\circlearrowleft}{\BC}}^{n_i} \xrightarrow{\alpha} \underset{\omega_j}{\underset{\circlearrowleft}{\BC}}^{n_j}\Big\}
$$
parameterizing maps $\omega_i$ and $\alpha$ such that 

\medskip 

\begin{itemize}[leftmargin=*]

\item $\omega_i$ are block diagonal with respect to the composition $n_i = n_i^{(1)} + \dots + n_i^{(d_i)}$, and the blocks take the form
\begin{equation}
\label{eqn:matrix}
\begin{pmatrix} \beta & 1 & 0 & \dots & 0 & 0 & 0 \\ \gamma & 0 & 1 & \dots & 0 & 0 & 0 \\ \delta & 0 & 0 & \dots & 0 & 0 & 0 \\ \vdots & \vdots & \vdots & \ddots & \vdots & \vdots & \vdots \\ o & 0 & 0 & \dots & 0 & 1 & 0 \\ \pi & 0 & 0 & \dots & 0 & 0 & 1 \\ \rho & 0 & 0 & \dots & 0 & 0 & 0 \end{pmatrix}
\end{equation}

\medskip 

\item $\alpha : \BC^{n_i} \rightarrow \BC^{n_j}$ takes the form
\begin{equation}
\label{eqn:alpha in coordinates}
\mathop{\sum_{1 \leq a \leq d_i}}_{1 \leq b \leq d_j} \mathop{\sum^{r<s}_{0 \leq r < n_i^{(a)}}}_{0 \leq s < n_j^{(b)}} c_{i,a,r}^{j,b,s} M^{i,a,r}_{j,b,s}
\end{equation}
where $M^{i,a,r}_{j,b,s} : \BC^{n_i} \rightarrow \BC^{n_j}$ is the linear map which takes the $r$-th standard basis vector in the $a$-th diagonal block of the domain to the $s$-th standard basis vector in the $b$-th diagonal block of the codomain (with respect to the blocks \eqref{eqn:matrix}), and all other basis vectors to 0.

\medskip 

\item the coordinates $\beta,\gamma,\delta,\dots,o,\pi,\rho, c_{i,a,r}^{j,b,s} \in \BC$ are not all 0, so
$$
U \cong \BA^N \backslash 0 
$$
where $N$ is the total number of such coordinates.

\end{itemize}

\medskip 

\noindent In order for a matrix $\omega_i$ with blocks \eqref{eqn:matrix} to be nilpotent, we must have $\beta = \gamma = \delta = \dots = o = \pi = \rho = 0$. Furthermore, to satisfy the equation $\omega_j \alpha = \alpha \omega_i$ for all arrows $\alpha : i \rightarrow j$, all the coordinates $c_{i,a,r}^{j,b,s}$ must be 0. We conclude that 
$$
(\text{Im }\jmath_{\bn}) \cap U = \varnothing 
$$
and so Lemma \ref{lem:one} implies that $\upsilon^*(E) = 0$. However, the fact that we fixed the super-diagonal entries of the $\omega_i$'s to be equal to 1 is equivalent to the fact that the elementary maximal torus characters of $\GL_{\bn}$ have been specialized to
\begin{equation}
\label{eqn:variables final}
z_{i,\nu_i^{(a-1)}+1}q = z_{i,\nu_i^{(a-1)}},\dots,z_{i,\nu_i^{(a)}} q = z_{i,\nu_i^{(a)}-1}
\end{equation}
where $\nu_i^{(a)} = n_i^{(1)} + \dots + n_i^{(a)}$ for all $i \in I$ and $a \in \{1,\dots,d_i\}$. This is manifestly the same condition as replacing $E$ by the specialization $G$ of \eqref{eqn:specialization final}. With respect to the specialization \eqref{eqn:variables final}, the coordinates of $U$ are acted on by the torus characters
\begin{align*}
&q,q^2,\dots,q^{n_i^{(a)}} \ \ \text{for } \beta,\gamma,\delta,\dots,o,\pi,\rho \text{ in \eqref{eqn:matrix}} \\
&\frac {x_{jb}q^{s}}{x_{ia} t_\alpha q^r} \quad \qquad \text{ for } c_{i,a,r}^{j,b,s} \text{ in \eqref{eqn:alpha in coordinates}}
\end{align*}
It is easy to see that an element in the equivariant $K$-theory of $U\cong \BA^N \backslash 0$ vanishes if and only if it is divisible by the exterior power of the vector space with the above torus characters, which is none other than $\Pi_1\Pi_2'$. One proves that $G$ is divisible by $\Pi_1\Pi_2''$ analogously, by swapping the roles of the arrows $\alpha$ and $\dalpha$ in the preceding argument.

\end{proof}

\begin{remark}

The proof above establishes the fact that elements in the loop-nilpotent $K$-HA satisfy the divisibility conditions of Definition \ref{def:shuffle int}. We also note the parallel result of \cite[Lemma 4.1]{PadurariuTodaCat2}, who constructed elements in the $K$-HA of the tripled Jordan quiver supported on (what is essentially) the fully nilpotent locus, which satisfy some stronger divisibility conditions. 

\end{remark}

\medskip

\printbibliography

\end{document}